\documentclass[12pt,a4paper]{article}
%%%%%%%%%%%%%%%%%%%%%%%%%%%%%%%%%%%%
%%%%%%%%%%%%%%%%%%%%%%%%%%%%%%%%%%%%
%%%%%%%%%%%%%%%%%%%%%%%%%%%%%%%%%%%%%%%%%%%%%%%%
\usepackage{mathrsfs,amsmath,amsfonts,amssymb,comment}
\usepackage{MnSymbol,mathtools,bm,xcolor,enumerate}
\usepackage{indentfirst}\setlength{\parindent}{2em}% indent
\usepackage[compress,sort]{cite}
%%%%%%%%%%%%%%%%%%%%%%%%%%%%%%%%%55
\usepackage[pdftex,bookmarks,colorlinks]{hyperref}
\hypersetup{citecolor=cyan,linkcolor=blue}
%%%%%%%%%%%%%%%%%%%%%%%%%%%%%%%%%%%%%%%%%%%
%%%% table setting
\usepackage{tabularx,multirow,array,booktabs}
\usepackage{colortbl}%%% color in tables
%%%%%%%%%%%%%%%%%%%%%%%%%%%%%%%%%%%%%%%%%%%%%%%%%%%%
%%%%%% figure setting
\usepackage{graphicx}%% 
\graphicspath{{figures/}}%
\usepackage{float}%% 
\usepackage{subcaption} 
\captionsetup{subrefformat=parens} 
%%%%%%%%%%%%%%%%%%%%%%%%%%%%%%%%%%
%%%%%%%%%%%%%%%%%%%%%%%%%%%%%%%%%%%%
\makeatletter\@addtoreset{equation}{section}\makeatother
%% \thequation resets when \thesection updates

%%%%%%%%%%%%%%%%%%%%%%%%%%%%%%%%
%%%%%%% set page properties
\setlength{\oddsidemargin}{0.5cm}
\setlength{\evensidemargin}{0.5cm}
\setlength{\textwidth}{15.8cm}
\setlength{\topmargin}{0.4cm}
\setlength{\headheight}{0.0cm}% 
\setlength{\headsep}{0.0cm}
\setlength{\textheight}{23.5cm}
%%%%%%%%%%%%%%%%%%%%%%%%%%%%%%%%
%%%%%%%%%%%%%%%%%%%%%%%%%%%%%%%%%%
%% Theorem environment below
\usepackage{amsthm}
\theoremstyle{plain}
\newtheorem{theorem}{Theorem}[section]%  
\newtheorem{lemma}[theorem]{Lemma}
\newtheorem*{lemma*}{Lemma}
\newtheorem{claim}[theorem]{Claim}
\newtheorem{proposition}[theorem]{Proposition}
\theoremstyle{definition}

\theoremstyle{remark}

%%%%%%%%%%%%%%%%%%%%%%%%%%%%%%%%%%%%%%%%%%%%%%%%%%%%%%%%%%%%%%%%%%%%%%%%%%%%%
%%%% New commands below
%% Real nature integer rational
\newcommand{\R}{\ifmmode\mathbb{R}\else$\mathbb{R}$\fi}
\newcommand{\N}{\ifmmode\mathbb{N}\else$\mathbb{N}$\fi}
\newcommand{\Z}{\ifmmode\mathbb{Z}\else$\mathbb{Z}$\fi}
\newcommand{\Q}{\ifmmode\mathbb{Q}\else$\mathbb{Q}$\fi}
%%%%%%%%%%%%%%% NN def
\newcommand{\tn}[1]{\textnormal{#1}}
\newcommand{\NN}{\mathcal{N\hspace{-2.5pt}N}}
\let\NNF\NN

\newcommand{\NNspace}{;\ }
\newcommand{\NNinput}{\tn{\#input}}
\newcommand{\NNwidthvec}{\tn{width\hspace{0.5pt}vec}}

\newcommand{\NNdepth}{\tn{depth}}

\let\NNneuron\NNnode
\let\NNlayer\NNdepth
\newcommand{\NNparameter}{\tn{\#parameter}}
\newcommand{\NNwidth}{\tn{width}}
\let\NNmaxwidth\NNwidth
\newcommand{\NNoutput}{\tn{\#output}}
%%%%%%%%%%%%%%%%%%%%%%%%%%%%%%%%%%%%%%%%5
%%%%%%%%%%%% bm 
\newcommand{\bmx}{{\bm{x}}}
\newcommand{\bmz}{{\bm{z}}}
\newcommand{\bmW}{{\bm{W}}}

\newcommand{\bmh}{{\bm{h}}}
\newcommand{\bmb}{{\bm{b}}}

\newcommand{\bmbeta}{{\bm{\beta}}}
\newcommand{\bmy}{{\bm{y}}}

\newcommand{\bmA}{\bm{A}}

\newcommand{\bmPhi}{{\bm{\Phi}}}
\newcommand{\bmzero}{{\bm{0}}}
%%%%%%%%%%%%%%%% cal
\newcommand{\calO}{{\mathcal{O}}}

\newcommand{\calH}{{\mathcal{H}}}
\newcommand{\calM}{\mathcal{M}}

\newcommand{\calA}{\mathcal{A}}
\newcommand{\calL}{\mathcal{L}}
\newcommand{\calT}{\mathcal{T}}
%%%%%%%%%%%%%%%% tilde
\newcommand{\tildephi}{{\widetilde{\phi}}}
\newcommand{\tildeg}{{\widetilde{g}}}

\newcommand{\tildef}{{\widetilde{f}}}
\newcommand{\tildepsi}{{\widetilde{\psi}}}
\newcommand{\txx}{{\widetilde{\bm{x}}}}

%%%%%%%%%%%%%%%% scr

\newcommand{\scrF}{{\mathscr{F}}}
\newcommand{\scrB}{{\mathscr{B}}}
%%%%%%%%%%%%%%%%%%%%%%%%%%%%%%%%%%%%%%%%%%%
%%%%%%%%%%%%%%%%%%%%%%%%%%%%%%%%%%%%%%%%%%%%
\newcommand{\Lip}{\tn{H\"old}}
\newcommand{\holder}[2]{{\color{black} B_{#1}(C^{#2}([0,1]^d)) }}

\newcommand{\bin}{\tn{bin}\hspace{1.2pt}}
\newcommand{\vcd}{\tn{VCDim}}

\newcommand{\Small}{{\mathcal{S\hspace{-2pt}L}}}
%%% others

\newcommand{\mystep}[2]{\par \vspace{0.25cm}\noindent\textbf{\hspace{8pt}Step }$#1\colon$ #2 \vspace{0.18cm} \par }

\newcommand{\mycase}[2]{\par \vspace{0.25cm}\noindent\textbf{\hspace{8pt}Case }$#1\colon$ #2\par \vspace{0.18cm} \par}

\usepackage{tikz}
\newcommand{\myto}[2][1]{\mathop{
		\vcenter{\hbox{\scalebox{1}[#1]{\tikz{\draw[->,line width=0.72pt] (0,0.5) to (0.69*#2,0.5);}}}}
}}

\newcommand{\myMathResize}[2][0.9]{
	\scalebox{#1}[#1]{\(\displaystyle #2\)}
}

\newcommand{\mybig}[1]{\ifthenelse{#1=1}{\big}{\ifthenelse{#1=2}{\Big}{\ifthenelse{#1=3}{\bigg}{\ifthenelse{#1=4}{\Bigg}{}}}}}

%%%%%%%%%%%%%%%%%%%%%%%%%%%%%%%%%%%%%%%%%%%%%%%%%%%

%\renewcommand{\thefootnote}{\textcircled{\raisebox{-0.5pt}{\arabic{footnote}}}}

%%%%%%%%%%%%%%%%%%%%%%%%%%%%%%%%%%%%%%%%%%%5
\definecolor{color1}{rgb}{0.00,0.00,0.928}
\definecolor{color2}{rgb}{0.000,0.801,1}
\definecolor{color3}{rgb}{0.288,1,0.680}
\definecolor{color4}{rgb}{0.961,0.961,0.961}
\definecolor{zsjcolor}{rgb}{0.05,0.423,0.15}
\definecolor{mycolor}{HTML}{FFD700}
\setlength{\fboxsep}{1.2pt}
\newcommand{\mylegend}[1]{\,\raisebox{1.512pt}{\framebox{\colorbox{#1}{\raisebox{2.412pt}{\,}\hspace{9pt}}}}\,}
%%%%%%%%%%%%%%%%%%%%%%%%%%%%%%%%%%%%%%%
%%%%%%%%%%%%%%%%%%%%%%%%%%%%%%%%%%%%%%%%

\newcommand{\sj}[1]{{\color{black}\textbf{}#1}}

\long\def\red#1{{\color{red}#1}}

\newenvironment{keywords}{\par \noindent\textbf{Key words}.}{\par}
%%%%%%%%%%%%%%%%%%%%%%%%%%%%%%%%%%%%%
%%%%%%%%%%%%%%%%%%%%%%%%%%%%%%%%%%%%%%%%%%%%%%%%%%%%%%%%%

\usepackage{makecell}
\usepackage{amsopn}

\usepackage{url}
\newcommand*{\email}[1]{\href{mailto:#1}{\nolinkurl{#1}}}
%%%%%%%%%%%%%%%%%%%%%%%%%%%%%%%%%%%%%%%%%%%%%%%5

%%% see web http://phaseportrait.blogspot.com/2007/08/lineno-and-amsmath-compatibility.html
%%%%line number below
\usepackage[left,mathlines]{lineno}
\usepackage{refcount}
%\renewcommand\linenumberfont{\rmfamily\upshape\scriptsize}

%%%%%%%%%%%%%%%%%%%%%%%%%%%%%%%%%%%%%%%%%%
%%%%%% %%%%% line numbers (cross-ref) properties setting

%%%%%%%%%%%%%%%%%%%%%%%%%%%%%%%%%%%%%%%%%
%%%%% line numbers properties setting
\definecolor{mylinenumbercolor}{HTML}{BEBEBE}

%\linenumbers
%%%%%%%%%%%%%%%%%%%%%%%%%%%%%%%%%%%%%%%%%%

%\newcommand{\mylineref}[1]{\hyperref[#1zsj]{\ref{#1}}}
%%%%%%%%% modify line numbers
\makeatletter
\newcommand*\patchAmsMathEnvironmentForLineno[1]{%
	\expandafter\let\csname old#1\expandafter\endcsname\csname #1\endcsname
	\expandafter\let\csname oldend#1\expandafter\endcsname\csname end#1\endcsname
	\renewenvironment{#1}%
	{\linenomath\csname old#1\endcsname}%
	{\csname oldend#1\endcsname\endlinenomath}}% 
\newcommand*\patchBothAmsMathEnvironmentsForLineno[1]{%
	\patchAmsMathEnvironmentForLineno{#1}%
	\patchAmsMathEnvironmentForLineno{#1*}}%
\patchBothAmsMathEnvironmentsForLineno{equation}%
\patchBothAmsMathEnvironmentsForLineno{align}%
\patchBothAmsMathEnvironmentsForLineno{flalign}%
\patchBothAmsMathEnvironmentsForLineno{alignat}%
\patchBothAmsMathEnvironmentsForLineno{gather}%
\patchBothAmsMathEnvironmentsForLineno{multline}%
\makeatother
%%%%%%%%%%%%%%%%%%%%%%%%%%%%%%%%%%%%%%%%%%%%%%%%%%%%%%%%
% Sets running headers as well as PDF title and authors
%\headers{Deep Network Approximation Characterized by Number of Neurons}{Z. Shen, H. Yang, and S. Zhang}
% Title. If the supplement option is on, then "Supplementary Material"
% is automatically inserted before the title.
\title{Deep Network Approximation Characterized by Number of Neurons\thanks{Submitted to the editors DATE.}}
%\funding{H. Yang was partially supported by National Science Foundation under the grant award 1945029.}

% Authors: full names plus addresses.
\author{Zuowei Shen\thanks{Department of Mathematics,  National University of Singapore
(\email{matzuows@nus.edu.sg}).}
  \and Haizhao Yang\thanks{Department of Mathematics,  Purdue University
  (\email{haizhao@purdue.edu}).}
\and Shijun Zhang\thanks{Department of Mathematics,  National University of Singapore
  (\email{zhangshijun@u.nus.edu}).}}
  \date{}
%%% Local Variables: 
%%% mode:latex
%%% TeX-master: "ex_article"
%%% End: 

%%%%%%%%%%%%%%%%%%%%%%%%%%%%%%%%%%%%%
\let\tilde\widetilde
\let\epsilon\varepsilon
\let\subset\subseteq

\long\def\red#1{{\color{black}#1}}
%%%%%%%%%%%%%%%%%%%%%%%%%%%%%%%%%%%%%%%%%%%%
\begin{document}
\maketitle
%\tableofcontents
%%%%%%%%%%%%%%%%%%%%%%%%%%%%%%%%%%%%%%%%%%%%%%
\begin{abstract}
This paper quantitatively characterizes the approximation power of deep feed-forward neural networks (FNNs) in terms of the number of neurons. It is shown by construction that ReLU FNNs with width $\mathcal{O}\big(\max\{d\lfloor N^{1/d}\rfloor,\, N+1\}\big)$
and depth $\mathcal{O}(L)$  can approximate an arbitrary H\"older continuous function of order $\alpha\in (0,1]$ on $[0,1]^d$ with a nearly tight approximation rate $\mathcal{O}\big(\sqrt{d} N^{-2\alpha/d}L^{-2\alpha/d}\big)$ measured in $L^p$-norm for any $N,L\in \mathbb{N}^+$ and $p\in[1,\infty]$. More generally  for an arbitrary continuous function $f$ on $[0,1]^d$ with a modulus of continuity $\omega_f(\cdot)$, the constructive approximation rate is $\mathcal{O}\big(\sqrt{d}\,\omega_f( N^{-2/d}L^{-2/d})\big)$. We also extend our analysis to $f$ on irregular domains or those localized in an $\varepsilon$-neighborhood of a $d_{\mathcal{M}}$-dimensional smooth manifold $\mathcal{M}\subseteq [0,1]^d$ with $d_{\mathcal{M}}\ll d$. Especially, in the case of an essentially low-dimensional domain, 
we show an approximation rate $\mathcal{O}\big(\omega_f(\tfrac{\varepsilon}{1-\delta}\sqrt{\tfrac{d}{d_\delta}}+\varepsilon)+\sqrt{d}\,\omega_f(\tfrac{\sqrt{d}}{(1-\delta)\sqrt{d_\delta}}N^{-2/d_\delta}L^{-2/d_\delta})\big)$ for ReLU FNNs to approximate $f$ in the $\varepsilon$-neighborhood, where
$d_\delta=\mathcal{O}\big(d_{\mathcal{M}}\tfrac{\ln (d/\delta)}{\delta^2}\big)$ for any 
$\delta\in(0,1)$ as a relative error for a projection to approximate an isometry when projecting $\mathcal{M}$ to a $d_{\delta}$-dimensional domain. 
\end{abstract}
\begin{keywords}
    Deep ReLU Neural Networks, H\"older Continuity, Modulus of Continuity, Approximation Theory,  Low-Dimensional Manifold, Parallel Computing.
\end{keywords}

\section{Introduction}
\label{sec:intro}
The approximation theory of neural networks has been an active research topic in the past few decades. Previously, as a special kind of ridge function approximation, shallow neural networks with one hidden layer and various activation functions (e.g., wavelets pursuits \cite{258082,471413}, adaptive splines \cite{devore_1998,PETRUSHEV2003158}, radial basis functions \cite{citeulike:3408165,6797088,radiusbase,HANGELBROEK2010203,Xie2013}, sigmoid functions \cite{HORNIK1991251,MAIOROV199981,KURKOVA1992501,BLUM1991511,Sig1,Sig2,Sig3,Sig4,Sig5}) were widely discussed and admit good approximation properties, e.g., the universal approximation property \cite{Cybenko1989ApproximationBS,HORNIK1989359,HORNIK1991251}, lessening the curse of dimensionality \cite{barron1993,Weinan2019,Weinan2019APE}, and providing attractive approximation rate in nonlinear approximation \cite{devore_1998,258082,471413,PETRUSHEV2003158,radiusbase,HANGELBROEK2010203,Xie2013}. %Particularly for the approximation rate, For functions in Besov spaces with smoothness $s$, \cite{radiusbase,HANGELBROEK2010203} constructed an $\calO(N^{-s/d})$ approximation that is almost optimal \cite{Lin2014} and the smoothness cannot be reduced generally \cite{HANGELBROEK2010203}. 
%For a general class of continuous functions, the approximation rate of one-hidden-layer FNNs can achieve $\calO(N^{-2/d})$ as shown in \cite{Xie2013}. 

The introduction of deep networks with more than one hidden layers has made significant impacts in many fields in computer science and engineering including computer vision \cite{NIPS2012_4824} and natural language processing \cite{6857341}. New scientific computing tools based on deep networks have also emerged and facilitated large-scale and high-dimensional problems that were impractical previously \cite{Han8505,E2017}. The design of deep ReLU FNNs is the key of such a revolution. These breakthroughs have stimulated broad research topics from different points of views to study the power of deep ReLU FNNs, e.g. in terms of combinatorics \cite{NIPS2014_5422}, topology \cite{ 6697897}, Vapnik-Chervonenkis (VC) dimension \cite{Bartlett98almostlinear,Sakurai,pmlr-v65-harvey17a}, fat-shattering dimension \cite{Kearns,Anthony:2009}, information theory \cite{PETERSEN2018296}, classical approximation theory \cite{Cybenko1989ApproximationBS,HORNIK1989359,barron1993,yarotsky18a,2019arXiv190210170S}, optimization \cite{NIPS2016_6112,DBLP:journals/corr/NguyenH17,opt} etc.

Particularly in approximation theory, \textbf{non-quantitative and asymptotic} approximation rates of ReLU FNNs have been proposed for various types of functions. For example, smooth functions \cite{NIPS2017_7203,DBLP:journals/corr/LiangS16,yarotsky2017,DBLP:journals/corr/abs-1807-00297}, piecewise smooth functions \cite{PETERSEN2018296}, band-limited functions \cite{bandlimit}, continuous functions \cite{yarotsky18a}, solutions to partial differential equations \cite{Martin2020}. However, to the best of our knowledge, existing  theories \cite{NIPS2017_7203,bandlimit,yarotsky2017,DBLP:journals/corr/LiangS16,Hadrien,suzuki2018adaptivity,PETERSEN2018296,yarotsky18a,DBLP:journals/corr/abs-1807-00297,Daubechies2019} can only provide implicit formulas in the sense that the approximation error contains an unknown prefactor, or work only for sufficiently large $N$ and $L$ larger than some unknown numbers. For example, \cite{yarotsky18a} estimated an approximation rate $c(d) L^{-2\alpha/d}$ via a narrow and deep ReLU FNN, where $c(d)$ is an unknown number depending on $d$, and $L$ is required to be larger than a sufficiently large unknown number $\mathscr{L}$. For another example, given an approximation error $\epsilon$, \cite{PETERSEN2018296} proved the existence of a ReLU FNN with a constant but still unknown number of layers approximating a $C^\beta$ function within the target error. These works can be divided into two cases: 1) FNNs with varying width and only one hidden layer \cite{radiusbase,HANGELBROEK2010203,Lin2014,Xie2013} (visualized by the region in \mylegend{color1} in Figure \ref{fig:comp}); 2) FNNs with a fixed width of $\calO(d)$ and a varying depth larger than an unknown number $\mathscr{L}$  \cite{NIPS2017_7203,yarotsky18a} (represented by the region in \mylegend{color3} in Figure \ref{fig:comp}). %For the class of functions in $C([0,1]^d)$, the best asymptotic approximation rate obtained in the first case was $\calO(N^{-1/d})$, while the best asymptotic approximation rate in the second case is $\calO(L^{-2/d})$, if we omit the modulus of continuity for simplicity. 

As far as we know, the first \textbf{quantitative and non-asymptotic} approximation rate
of deep ReLU FNNs was obtained in \cite{2019arXiv190210170S}. Specifically, \cite{2019arXiv190210170S} identified an explicit formulas of the approximation rate  
\begin{equation}
\label{eqn:exf}
 \begin{cases}
2 \lambda  N^{-2\alpha}, &\text{when  $L\geq 2$ and $d=1$,}\\
2(2\sqrt{d})^\alpha \lambda  N^{-2\alpha/d}, &\text{when $L\geq 3$  and  $d\ge2$},
\end{cases}
\end{equation}
for ReLU FNNs with an arbitrary width $N\in\mathbb{N}^+$ and a fixed depth $L\in\mathbb{N}^+$ to approximate a H{\"o}lder continuous function $f$ of order $\alpha$ with a H\"older constant $\lambda  $ (visualized in the region shown by \mylegend{color2} in Figure \ref{fig:comp}). The approximation rate $\calO(N^{-2\alpha/d})$ is tight in terms of $N$ and increasing $L$ cannot improve the approximation rate in $N$. The success of deep FNNs in a broad range of applications has motivated a well-known conjecture that the depth $L$ has an important role in improving the approximation power of deep FNNs. In particular, a very important question in practice would be, given an arbitrary $L$ and $N$, what is the explicit formula to characterize the approximation error so as to see whether the network is large enough to meet the accuracy requirement. Due to the highly nonlinear structure of deep FNNs, it is still a challenging open problem to characterize $N$ and $L$ simultaneously in the approximation rate. 

To answer the question just above, we establish the first framework that is able to quantify the approximation power of deep ReLU FNNs essentially with arbitrary width $N$ and depth $L$, achieving a nearly optimal approximation rate, $19\sqrt{d}\,\omega_f( N^{-2/d}L^{-2/d})$, for continuous functions $f\in C([0,1]^d)$. Our result is based on new analysis techniques merely based on the structure of FNNs and a modified bit extraction technique inspired by  \cite{Bartlett98almostlinear}, instead of designing FNNs to approximate traditional approximation basis like polynomials and splines as in the existing literature \cite{pmlr-v70-safran17a,DBLP:journals/corr/LiangS16,DBLP:journals/corr/RolnickT17,yarotsky2017,yarotsky18a,Hadrien,Boris,NIPS2017_7203,PETERSEN2018296,Johannes,suzuki2018adaptivity,MO}. The approximation rate obtained here admits an explicit formula to compute the prefactor when $\omega_f(\cdot)$ is known. For example, in the case of H{\"o}lder continuous functions of order $\alpha$ with a H\"older constant $\lambda  $ (denoted as the class $\holder{\lambda}{\alpha}$), $\omega_f(r)\le \lambda  r^\alpha$ for $r\ge 0$, resulting in the approximation rate $19\sqrt{d}\,\lambda  N^{-2\alpha/d}L^{-2\alpha/d}$ as mentioned previously. As a consequence, existing works for the function class $C([0,1]^d)$ are special cases of our result (see Figure \ref{fig:comp} for a comparison).

\begin{figure}[!htp]
    \centering
    \includegraphics[width=0.66\textwidth]{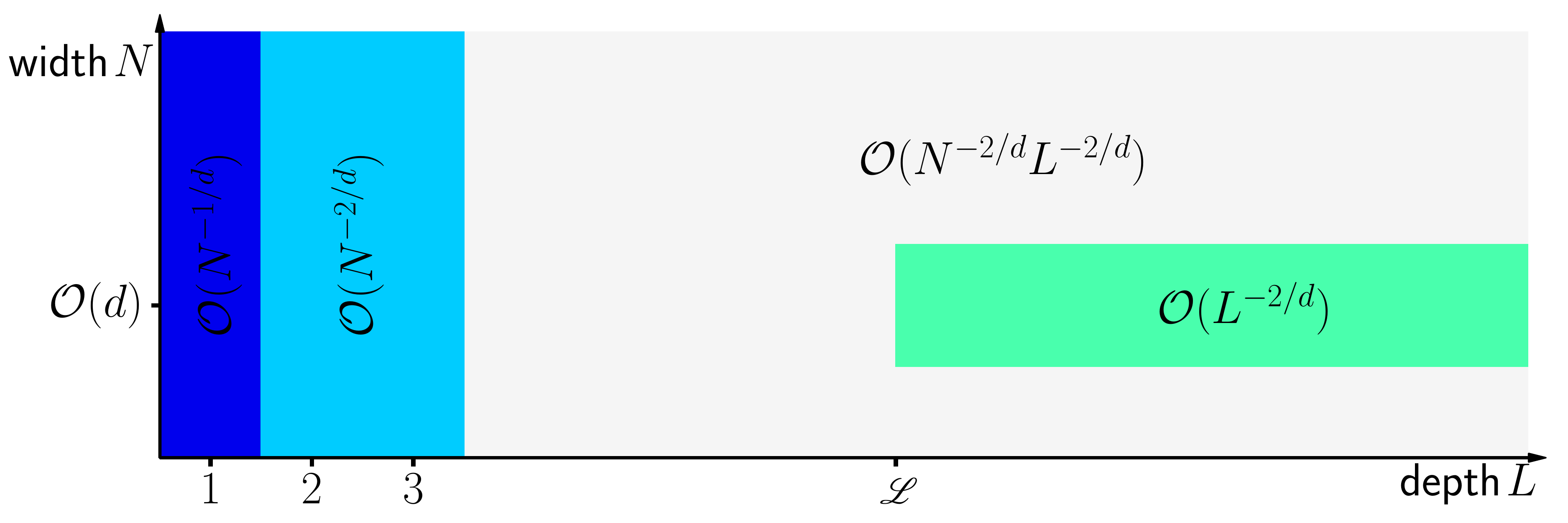}
    \caption{A summary of existing and our new results on the approximation rate of ReLU FNNs for continuous functions. Existing results \cite{radiusbase,HANGELBROEK2010203,Lin2014,Xie2013,NIPS2017_7203,yarotsky18a,2019arXiv190210170S} are applicable in the areas in \mylegend{color1}, \mylegend{color2}, and \mylegend{color3}; our new result is  suitable for almost all areas when $L\geq 2$. %To simplify notations we omit the modulus of continuity. Most existing analysis is asymptotic and can be applied to two cases: 1) FNNs with varying width and only one hidden layer \cite{radiusbase,HANGELBROEK2010203,Lin2014,Xie2013} (visualized by the region in \mylegend{color1}); 2) FNNs with a fixed width of $\calO(d)$ and a varying depth larger than an unknown number $\mathscr{L}$  \cite{NIPS2017_7203,yarotsky18a} (represented by the region in \mylegend{color3}). For the class of functions in $C([0,1]^d)$, the best approximation rate obtained in the first case was $\calO(N^{-1/d})$, while the best approximation rate in the second case is $\calO(L^{-2/d})$. The first quantitative analysis was proposed by the authors in this paper in  \cite{2019arXiv190210170S}  and it is capable of analyzing ReLU FNNs with $\calO(1)$ hidden layers in the region shown by \mylegend{color2}. This paper extends the result in \cite{2019arXiv190210170S} to the whole domain with arbitrary depth and width  visualized in white, and provides an optimal approximation rate, essentially $\calO(N^{-2/d}L^{-2/d})$, of ReLU FNNs for $C([0,1]^d)$. 
    }
\label{fig:comp}
\end{figure}

Our key contributions can be summarized as follows.
\begin{enumerate}
\item Upper bound: We provide a quantitative and non-asymptotic approximation rate $19\sqrt{d}\,\omega_f( N^{-2/d}L^{-2/d})$ in terms of width $\calO(N)$ and depth $\calO(L)$ for functions in $C([0,1]^d)$ in Theorem \ref{thm:main}. 

\item Lower bound: Through the nearly tight VC-dimension bounds of ReLU FNNs \cite{pmlr-v65-harvey17a}, we show that the approximation rate $19\sqrt{d}\,\omega_f( N^{-2\alpha/d}L^{-2\alpha/d})$ in terms of $N$ and $L$ is nearly optimal for $\holder{\lambda}{\alpha}$ in Theorem \ref{thm:lowInfty}.

\item The approximation rate in terms of the width and depth in this paper is more generic and useful than the one characterized by the number of nonzero parameters denoted as $W$ in the literature. First, the characterization in terms of width and depth implies the one in terms of $W$, while it is not true the other way around. Second, our theory can provide practical guidance for choosing network sizes in realistic applications while theories in terms of $W$ cannot tell how large a network should be to guarantee a target accuracy, since there are too many networks of different sizes sharing the same number of parameters but with different accuracies.

\item Finally, three aspects of neural networks in practice are discussed: 1) neural network approximation in a high-dimensional irregular domain; 2) neural network approximation in the case of a low-dimensional data structure; 3) the optimal ReLU FNN in parallel computation.
\end{enumerate}

 Our main result, Theorem \ref{thm:main} below, shows that ReLU FNNs with width $\calO(N)$  and depth $\calO(L)$ can approximate $f$ with an approximation rate $19\sqrt{d}\,\omega_f( N^{-2/d}L^{-2/d})$, where $\omega_f(\cdot)$ is the modulus of continuity of $f$ defined via
\begin{equation*}
\omega_f(r)\coloneq \sup\big\{|f(\bm{x})-f(\bm{y})|:\bm{x},\bm{y}\in [0,1]^d,\ \|\bm{x}-\bm{y}\|_2\le r\big\},\quad \tn{for any $r\ge 0$}.
\end{equation*}

\begin{theorem}
	\label{thm:main}
	Given $f\in C([0,1]^d)$, for any $L\in \N^+$, $N\in \N^+$, and $p\in [1,\infty]$, 
	there exists a function $\phi$ implemented by a ReLU FNN with width $C_1\max\big\{d\lfloor N^{1/d}\rfloor,\, N+1\big\}$
	and depth $12L+C_2$
	such that 
	\begin{equation*}
	\|f-\phi\|_{L^p([0,1]^d)}\le 19\sqrt{d}\,\omega_f(N^{-2/d}L^{-2/d}),
	\end{equation*}
	where $C_1=12$ and $C_2=14$ if $p\in [1,\infty)$;  $C_1=3^{d+3}$ and $C_2=14+2d$ if $p=\infty$.
	%        where $C_1=\left\{\begin{aligned}
	%        &14,&p&\in [1,\infty), \\
	%        &3^{d+3},&p&=\infty,
	%        \end{aligned}\right.$ and $C_2=\left\{\begin{aligned}
	%        &13,&p&\in [1,\infty), \\
	%        &13+2d,&p&=\infty.
	%        \end{aligned}\right.$
\end{theorem}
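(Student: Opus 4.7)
The plan is to build the approximant in three stages and then combine them. First, discretize $[0,1]^d$ into a uniform grid of $K^d$ axis-aligned cubes with $K \approx N^{2/d}L^{2/d}$, so that each cube has diameter at most $\sqrt{d}\,N^{-2/d}L^{-2/d}$. On each cube, approximate $f$ by a constant equal to its value at a chosen representative point $\bmx_\beta$; the pointwise error is then at most $\omega_f(\sqrt{d}\,N^{-2/d}L^{-2/d})$, and after absorbing the $\sqrt{d}$ into $\omega_f$ via $\omega_f(\sqrt{d}\,r) \le \lceil\sqrt{d}\rceil\omega_f(r)$ one obtains the target prefactor, which careful bookkeeping tallies up to $19\sqrt{d}$. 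The remaining task is to realize the piecewise-constant ``lookup'' map $\bmx \mapsto f(\bmx_\beta)$ by a single ReLU FNN of the claimed size.

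Second, I would build a ReLU sub-network that sends $\bmx$ to the integer index $\beta$ of its cube. Each coordinate reduces to a one-dimensional staircase with $K$ steps; stacking a ``coarse'' staircase of $\calO(N^{1/d})$ steps with a deeper ``sawtooth-composition'' block of depth $\calO(L)$ realizes a staircase with the required $\calO(N^{2/d}L^{2/d})$ steps using width $\calO(N^{1/d})$ per coordinate and depth $\calO(L)$. Running these in parallel over the $d$ coordinates accounts for the $d\lfloor N^{1/d}\rfloor$ term in the width bound, and a single linear combination collapses the multi-index $\beta$ into a scalar index in $\{0,\ldots,N^2L^2-1\}$.

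Third, I would apply the modified bit-extraction sub-network mentioned in the introduction (inspired by the technique underlying the near-tight VC-dimension bound of \cite{Bartlett98almostlinear}) to realize an arbitrary function on the $N^2L^2$ integer inputs with width $\calO(N)$ and depth $\calO(L)$. Feeding in the pre-stored sample values $f(\bmx_\beta)$ yields the value-lookup, and composing with the index sub-network gives the final piecewise-constant approximant of the claimed size. The depth budget $12L + C_2$ should accommodate these stages together with a constant number of bookkeeping layers for rescaling and routing.

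Finally, the $L^p$ versus $L^\infty$ distinction is handled at the cube boundaries. For $p<\infty$ the construction may be inaccurate on a thin transition region of measure $\calO(K^{-1})$ near cube faces, which only contributes a lower-order $L^p$-error since $f$ is bounded, so the width constant $C_1=12$ suffices. For $p=\infty$ one must guarantee accuracy everywhere, which I would arrange by carrying $3^d$ grids obtained by small coordinatewise shifts and taking a ``middle'' value of the outputs (implementable by ReLU): at every point at least one shifted grid places $\bmx$ in the interior of a cube, so the middle value is accurate. This inflates the width constant to $3^{d+3}$ and adds $2d$ to $C_2$. The main obstacle I anticipate is the third stage: the bit-extraction sub-network must simultaneously match the sharp width $\calO(N)$, depth $\calO(L)$, and accuracy requirements, and interfacing it cleanly with the integer-valued index sub-network, and with the shifted-grid trick in the $L^\infty$ case, demands careful tracking of constants and quantization errors.
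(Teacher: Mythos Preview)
Your proposal is correct and follows essentially the same route as the paper: the paper first proves the approximation outside a thin ``trifling region'' (your transition region) via exactly the two-stage construction you describe---a coordinatewise step-function sub-network (Proposition~\ref{prop:stepFunc}) feeding into a bit-extraction point-fitting sub-network (Proposition~\ref{prop:pointFitting})---and then handles $p<\infty$ by making the trifling region's measure small enough, and $p=\infty$ by the $3^d$-shifted-grid trick you sketch (imported as Lemma~\ref{thm:Gap} from \cite{2020arXiv200103040L}). Your anticipated obstacle is real but is precisely what the paper spends Section~\ref{sec:mainTriflingRegion} on.
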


When Theorem \ref{thm:main} is applied to $f\in \holder{\lambda}{\alpha}$, the approximation rate is $19\sqrt{d}\,\lambda  N^{-2\alpha/d}L^{-2\alpha/d}$, because $\omega_f(r)\le \lambda  r^\alpha$ for any $r\ge 0$. An immediate question following the constructive approximation is how much we can improve the approximation rate.
In fact, the approximation rate of $f\in \holder{\lambda}{\alpha}$ is asymptotically tight based on VC-dimension as we shall see later. 

In most real applications of neural networks, though the target function $f$ is defined in a high-dimensional domain, e.g., $[0,1]^d$, where $d$ could be tens of thousands or even millions, only the approximation error of $f$ in a neighborhood of a $d_{\mathcal{M}}$-dimensional manifold $\mathcal{M}$ with $d_{\mathcal{M}}\ll d$ is concerned. Hence, we extend Theorem \ref{thm:main} to the case when the domain of $f$ is localized in an $\epsilon$-neighborhood of a compact $d_\calM$-dimensional Riemannian submanifold $\calM\subseteq [0,1]^d$ having condition number $1/\tau$, volume $V$, and geodesic covering regularity $\mathcal{R}$. The $\epsilon$-neighborhood is defined as 
\begin{equation}\label{eqn:Me}
\calM_\epsilon  \coloneqq \big\{\bmx\in[0,1]^d:\inf \{\|\bmx-\bmy\|_2:\bmy\in \calM\}\le \epsilon\big\},\quad \tn{for $\epsilon\in(0,1)$.}
\end{equation}

 Let $d_\delta=\calO\left(\tfrac{d_\calM \ln (dV\mathcal{R}\tau^{-1}\delta^{-1})}{\delta^2}    \right)=\calO\big(d_\calM\tfrac{\ln (d/\delta)}{\delta^2}\big)$ be an integer for any $\delta\in (0,1)$ such that $d_\calM\le d_\delta\le d$. 
 We show an approximation rate \[2\omega_f\big(\tfrac{2\epsilon}{1-\delta}\sqrt{\tfrac{d}{d_\delta}}+2\epsilon\big)+19\sqrt{d}\,\omega_f(\tfrac{2\sqrt{d}}{(1-\delta)\sqrt{d_\delta}}N^{-2/d_\delta}L^{-2/d_\delta})\] for ReLU FNNs to pointwisely approximate $f$ on $\calM_\varepsilon$. 
 The key ideas of the proof is the application of Theorem $3.1$ in \cite{Baraniuk2009}, which provides a nearly isometric projection $\bmA\in\mathbb{R}^{d_\delta\times d}$ that maps points in $\mathcal{M}\subseteq[0,1]^d$ to a $d_\delta$-dimensional domain with
  \[{(1-\delta)}|\bmx_1-\bmx_2|\le |\bmA\bmx_1-\bmA\bmx_2|\le {(1+\delta)}|\bmx_1-\bmx_2|,\quad \text{for any }\bmx_1,\bmx_2\in \calM,\] and the application of Theorem \ref{thm:main} in this paper, which constructs the desired ReLU FNN with a size depending on $d_\delta$ instead of $d$ to lessen the curse of dimensionality. When $\delta$ is closer to $1$, $d_\delta$ is closer to $d_{\mathcal{M}}$ but the isometric property of the projection is weakened; when $\delta$ is closer to $0$, the isometric property becomes better but $d_\delta$ could be larger than $d$, in which case we can simply enforce $d_\delta=d$ and choose the identity map as the projection. Hence, $\delta\in(0,1)$ is a parameter to make a balance between isometry and dimension reduction.
 
\begin{theorem}
    \label{thm:upDimReduction}
    Let $f$ be a continuous function on $[0,1]^d$ and $\calM\subseteq [0,1]^d$ be a compact $d_\calM$-dimensional Riemannian submanifold. For any  $N\in \N^+$,  $L\in\N^+$, $\epsilon\in(0,1)$, and $\delta\in(0,1)$, there exists a function $\phi$ implemented by a ReLU FNN  with width $3^{d_\delta+3}\max\big\{d_\delta\lfloor N^{1/d_\delta}\rfloor,\, N+1\big\}$ and depth $12L+14+2d_\delta$ such that
    \begin{equation}
    \label{eqn:errLp}
    \myMathResize[0.9325]{|f(\bmx)-\phi(\bmx)|
    \le 2\omega_f\big(\tfrac{2\epsilon}{1-\delta}\sqrt{\tfrac{d}{d_\delta}}+2\epsilon\big)+19\sqrt{d}\,\omega_f(\tfrac{2\sqrt{d}}{(1-\delta)\sqrt{d_\delta}}N^{-2/d_\delta}L^{-2/d_\delta})},
    \end{equation}   
    for any $\bmx\in \calM_\varepsilon$, where $\calM_\varepsilon$ is defined in Equation \eqref{eqn:Me} 
\end{theorem}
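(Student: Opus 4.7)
The plan is to reduce the approximation problem on $\calM_\epsilon\subseteq [0,1]^d$ to one on $[0,1]^{d_\delta}$ by pushing $\calM$ through a near-isometric linear map $\bmA\in\R^{d_\delta\times d}$ supplied by Theorem 3.1 of \cite{Baraniuk2009}, and then invoking Theorem \ref{thm:main} at the reduced dimension. Because linear pre-composition can be absorbed by the first affine layer of a ReLU FNN without adding depth or widening any layer, the network built at dimension $d_\delta$ will automatically have the stated width and depth when viewed as a function of $\bmx\in \R^d$.

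Concretely, I would first fix $\bmA$ with $(1-\delta)\|\bmx_1-\bmx_2\|_2\le \|\bmA\bmx_1-\bmA\bmx_2\|_2\le (1+\delta)\|\bmx_1-\bmx_2\|_2$ on $\calM$ and operator norm $\|\bmA\|_{\mathrm{op}}\lesssim \sqrt{d/d_\delta}$ (a byproduct of the Gaussian-matrix normalization used in \cite{Baraniuk2009}). Since $\bmA$ is injective on $\calM$, define $g:\bmA(\calM)\to \R$ by $g(\bmA\bmx)\coloneq f(\bmx)$; the lower isometry bound gives $|g(\bmy_1)-g(\bmy_2)|\le \omega_f\big(\tfrac{1}{1-\delta}\|\bmy_1-\bmy_2\|_2\big)$. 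I would then extend $g$ from $\bmA(\calM)$ to $\R^{d_\delta}$ by a McShane--Whitney-type construction based on the least concave majorant of $\omega_f$ (which is bounded by $2\omega_f$ and stays subadditive), and affinely rescale by a map $T$ so that the relevant portion of $\R^{d_\delta}$ lands inside $[0,1]^{d_\delta}$. Since $\bmA([0,1]^d)$ has diameter $\lesssim \sqrt{d}$, the rescaling introduces a factor of order $\sqrt{d/d_\delta}$, and the resulting function $\tilde h\colon [0,1]^{d_\delta}\to\R$ inherits the modulus of continuity $\omega_{\tilde h}(r)\le 2\omega_f\big(\tfrac{\sqrt{d}}{(1-\delta)\sqrt{d_\delta}}r\big)$.

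Next I would apply Theorem \ref{thm:main} in dimension $d_\delta$ with $p=\infty$ to $\tilde h$, producing a ReLU FNN $\psi$ of width $3^{d_\delta+3}\max\{d_\delta\lfloor N^{1/d_\delta}\rfloor,N+1\}$ and depth $12L+14+2d_\delta$ with $\|\tilde h-\psi\|_{L^\infty([0,1]^{d_\delta})}\le 19\sqrt{d_\delta}\,\omega_{\tilde h}(N^{-2/d_\delta}L^{-2/d_\delta})$, and set $\phi(\bmx)\coloneq \psi(T\bmA\bmx)$, folding the affine map $T\bmA$ into the first layer of $\psi$. For $\bmx\in\calM_\epsilon$ with a closest point $\bmx_0\in\calM$, the triangle-inequality decomposition
\[
|f(\bmx)-\phi(\bmx)|\le |f(\bmx)-f(\bmx_0)|+|\tilde h(T\bmA\bmx_0)-\tilde h(T\bmA\bmx)|+|\tilde h(T\bmA\bmx)-\psi(T\bmA\bmx)|
\]
leaves three terms. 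The first is $\le \omega_f(\epsilon)$; the middle, bounded via $\omega_{\tilde h}$ applied to $\|T\bmA(\bmx-\bmx_0)\|_2$ (itself controlled by $\|\bmA\|_{\mathrm{op}}\cdot\epsilon\lesssim \sqrt{d/d_\delta}\,\epsilon$ together with the $\sqrt{d/d_\delta}$ distortion already baked into $\omega_{\tilde h}$), yields the $\omega_f\big(\tfrac{2\epsilon}{1-\delta}\sqrt{d/d_\delta}+2\epsilon\big)$ contribution; the last yields the $19\sqrt{d}\,\omega_f\big(\tfrac{2\sqrt{d}}{(1-\delta)\sqrt{d_\delta}}N^{-2/d_\delta}L^{-2/d_\delta}\big)$ term after unpacking $\omega_{\tilde h}$ and using $\sqrt{d_\delta}\cdot\sqrt{d/d_\delta}=\sqrt{d}$.

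The main obstacle is the extension step: one must ensure $\omega_{\tilde h}$ is expressible in terms of $\omega_f$ (not its Lipschitz hull), so that the factor of $2$ coming from the concave-majorant trick combines cleanly with the $\tfrac{1}{1-\delta}\sqrt{d/d_\delta}$ distortion-plus-rescaling constant — this is what forces the explicit factors of $2$ in \eqref{eqn:errLp}. A secondary subtlety, which cannot be finessed away, is that the Baraniuk--Wakin near-isometry holds only on $\calM$ itself; for $\bmx\in\calM_\epsilon\setminus\calM$, the comparison between $\bmA\bmx$ and $\bmA\bmx_0$ must go through the crude operator-norm bound rather than the manifold isometry, and this is precisely why the extra $+2\epsilon$ summand appears inside the first $\omega_f$ on the right-hand side of \eqref{eqn:errLp}.
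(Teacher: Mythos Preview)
Your overall strategy matches the paper's: project $\calM$ via the Baraniuk--Wakin near-isometry $\bmA$, build a low-dimensional surrogate, approximate it by the main theorem in dimension $d_\delta$, and pre-compose with $\bmA$. The paper, however, makes a different tactical choice at the extension step: rather than defining $g$ only on $\bmA(\calM)$ and extending exactly, it defines $\tildef$ on all of $\bmA(\calM_\varepsilon)$ via a measurable selector $\bmx_\bmy=\Small(\{\bmx\in\calM_\varepsilon:\bmA\bmx=\bmy\})$, obtains the \emph{shifted} modulus inequality $|\tildef(\bmy_1)-\tildef(\bmy_2)|\le\omega_f\big(\tfrac{1}{1-\delta}|\bmy_1-\bmy_2|+\Delta\big)$, and then invokes an approximate-extension lemma (Lemma~\ref{lem:extensionContiuous2}) that trades the shift $\Delta$ for an additive error $\omega_f(\tfrac{\Delta}{1-\delta})$ while producing an honest modulus $\omega_f(\tfrac{r}{1-\delta})$ on $\R^{d_\delta}$. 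It then applies Theorem~\ref{thm:mainIrregularDomain} (which is exactly your ``rescale and apply Theorem~\ref{thm:main}'' step packaged as a lemma). Your route is arguably cleaner, since defining $g$ only on $\bmA(\calM)$ avoids the selector and the $\Delta$-shift machinery; the paper's route has the advantage that the $\varepsilon$-tube is already baked into $\tildef$, so the final triangle inequality has one fewer term to juggle.

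Two small points in your bookkeeping deserve correction. First, the concave-majorant detour is unnecessary: any modulus of continuity is subadditive, and McShane extension with a subadditive $\omega$ already preserves the modulus exactly (this is precisely Lemma~\ref{lem:extensionContiuous2} with $\Delta=0$). The factor $2$ in front of the first $\omega_f$ in \eqref{eqn:errLp} does \emph{not} come from a majorant penalty; in the paper it arises because the final decomposition produces two separate terms of size $\omega_f\big(\tfrac{2\epsilon}{1-\delta}\sqrt{d/d_\delta}+2\epsilon\big)$ (one from $|f(\bmx)-f(\bmx_\bmy)|$, one from the extension error $|\tildef(\bmy)-\tildeg(\bmy)|$), and in your scheme it will come from your first two triangle-inequality pieces. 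Second, the $19\sqrt{d}$ in the last term is obtained simply via $19\sqrt{d_\delta}\le 19\sqrt{d}$, not via the cancellation $\sqrt{d_\delta}\cdot\sqrt{d/d_\delta}=\sqrt{d}$ you suggest; the $\sqrt{d/d_\delta}$ factor lives inside the argument of $\omega_f$ and cannot be pulled outside.
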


The approximation rate of deep neural networks for functions defined precisely on low-dimensional smooth manifolds has been studied in \cite{SHAHAM2018537} for $C^2$ functions and in \cite{DBLP:journals/corr/abs-1811-09054,10.3389/fams.2018.00014} for Lipschitz continuous functions. Considering that it might be more reasonable to assume data located in a small neighborhood of low-dimensional smooth manifold in real applications, we introduce the $\epsilon$-neighborhood of the manifold $\mathcal{M}$ in Theorem \ref{thm:upDimReduction}. In general, existing results are again asymptotic and they cannot be applied to estimate the approximation accuracy of a ReLU FNN with arbitrarily given width $N$ and depth $L$, since there is no explicit formula without unknown constants to specify the exact error bound. For example, \cite{DBLP:journals/corr/abs-1811-09054} provides an approximation rate $c_1 \left(NL\right)^{-c_2/d_\delta}$ with unknown constants (e.g., $c_1$ and $c_2$) and requires $NL$ greater than an unknown large number. The demand of an explicit error estimation motivates Theorem \ref{thm:upDimReduction} in this paper. When data are concentrating around $\mathcal{M}$, $\epsilon$ is very small and the dominant term of the approximation error in \eqref{eqn:errLp} is $19\sqrt{d}\,\omega_f(\tfrac{2\sqrt{d}}{(1-\delta)\sqrt{d_\delta}}N^{-2/d_\delta}L^{-2/d_\delta})$ implying that the approximation via deep ReLU FNNs can lessen the curse of dimensionality.

The analysis above provides a general guide for selecting the width and depth of ReLU FNNs to approximate continuous functions, especially when the computation is conducted with parallel computing, which is usually the case in real applications \cite{10.1007/978-3-642-15825-4_10,Ciresan:2011:FHP:2283516.2283603}. As we shall see later, when the approximation accuracy and the parallel computing efficiency are considered together, very deep FNNs become less attractive than those with $\calO(1)$ depth. 

The approximation theories in this paper assume that the target function $f$ is fully accessible, making it possible to estimate the approximation error and identify an asymptotically optimal ReLU FNN with a given budget of neurons to minimize the approximation error. In real applications, usually only a limited number of possibly noisy observations of $f$ is available, resulting in a regression problem in statistics. In the latter case, the problem is usually formulated in a stochastic setting with randomly generated noisy observations and the regression error contains mainly two components: bias and variance. The bias is the difference of the expectation of an estimated function and its ground truth $f$. The approximation theories in this paper play an important role in characterizing the power of neural networks when they are applied to solve regression problems by providing a lower bound of the regression bias.

The rest of this paper is organized as follows. We first prove Theorem \ref{thm:main} and show its optimality in Section \ref{sec:contFunc} when assuming Theorem \ref{thm:mainGap} is true. Next, Theorem \ref{thm:mainGap} is proved in Section \ref{sec:mainTriflingRegion}. In Section \ref{sec:NNP}, three aspects of neural networks in practice will be discussed: 1) neural network approximation in a high-dimensional irregular domain; 2) neural network approximation in the case of a low-dimensional data structure; 3) the optimal ReLU FNN in parallel computation. Finally, Section \ref{sec:conclusion} concludes this paper with a short discussion.

\section{Approximation of continuous functions}
\label{sec:contFunc}
In this section, we prove Theorem \ref{thm:main} and discuss its optimality when assume Theorem \ref{thm:mainGap} is true. Notations throughout the proof will be summarized in Section \ref{sec:notation}. 

\subsection{Notations}
\label{sec:notation}

    Let us summarize all basic notations used in this paper as follows.
\begin{itemize}

     \item Matrices are denoted by bold uppercase letters. For instance,  $\bm{A}\in\mathbb{R}^{m\times n}$ is a real matrix of size $m\times n$, and $\bm{A}^T$ denotes the transpose of $\bm{A}$.  %Correspondingly, $\bm{A}(i,j)$ is the $(i,j)$-th entry of $\bm{A}$; $\bm{A}(:,j)$ is the $j$-th column of $\bm{A}$; $\bm{A}(i,:)$ is the $i$-th row of $\bm{A}$. 
     Vectors are denoted as bold lowercase letters. For example, $\bm{v}=\left[\def\arraystretch{0.748}\begin{array}{c}
          v_1  \\
          \vdots \\
          v_d
     \end{array}\right]=[v_1,\cdots,v_d]^T\in \R^d$ is a column vector %consisting of numbers $\{v_i\}_i$
     with $\bm{v}(i)=v_i$ being the $i$-th element. Besides, ``['' and ``]''  are used to  partition matrices (vectors) into blocks, e.g., $\bmA=\left[\begin{smallmatrix}\bmA_{11}&\bmA_{12}\\ \bmA_{21}&\bmA_{22}\end{smallmatrix}\right]$.
     
      \item For any $p\in [1,\infty)$, the $p$-norm of a vector $\bmx=[x_1,x_2,\cdots,x_d]^T\in\R^d$ is defined by 
    \begin{equation*}
        \|\bmx\|_p\coloneqq \big(|x_1|^p+|x_2|^p+\cdots+|x_d|^p\big)^{1/p}.
    \end{equation*}
     
     \item  Let $\mu(\cdot)$ be the Lebesgue measure. 
     
         \item Let $1_{S}$ be the characteristic function on a set $S$, i.e., $1_{S}$ is equal to $1$ on $S$ and $0$ outside of $S$.

     \item The set difference of two sets $A$ and $B$ is denoted by $A\backslash B:=\{x:x\in A,\ x\notin B\}$. 
     
     %\item Assume $\mathcal{N}$ is a subset of $\R$, then $\max \mathcal{N}$, $\min\mathcal{N}$, $\sup \mathcal{N}$, and $\inf \mathcal{N}$ mean the maximum, the minimum, the supremum, and the infimum of $\mathcal{N}$, respectively.
     
     \item For any $\xi\in \R$, let $\lfloor \xi\rfloor:=\max \{i: i\le \xi,\ i\in \Z\}$ and $\lceil \xi\rceil:=\min \{i: i\ge \xi,\ i\in \Z\}$.
     \item Assume $\bm{n}\in \N^d$, then $f(\bm{n})=\mathcal{O}(g(\bm{n}))$ means that there exists positive $C$ independent of $\bm{n}$, $f$, and $g$ such that $ f(\bm{n})\le Cg(\bm{n})$ when all entries of $\bm{n}$ go to $+\infty$.
     
     \item Let $\sigma:\R\to \R$ denote the rectified linear unit (ReLU), i.e. $\sigma(x)=\max\{0,x\}$. With the abuse of notations, we define $\sigma:\R^d\to \R^d$ as $\sigma(\bmx)=\left[\begin{array}{c}
          \max\{0,x_1\}  \\
          \vdots \\
          \max\{0,x_d\}
     \end{array}\right]$ for any $\bmx=[x_1,\cdots,x_d]^T\in \R^d$.
     
     \item 
     Given $K\in N^+$ and $\delta\in (0, \tfrac{1}{K})$, define a trifling region  $\Omega([0,1]^d,K,\delta)$ of $[0,1]^d$ as 
     \begin{equation}
     \label{eq:triflingRegionDef}
     \Omega([0,1]^d,K,\delta)\coloneqq\bigcup_{i=1}^{d} \Big\{\bmx=[x_1,x_2,\cdots,x_d]^T\red{\in [0,1]^d}: x_i\in \cup_{k=1}^{K-1}(\tfrac{k}{K}-\delta,\tfrac{k}{K})\Big\}.
     \end{equation}
     In particular, $\Omega([0,1]^d,K,\delta)=\emptyset$ if $K=1$. See Figure \ref{fig:region} for two examples of trifling regions.
     
     \begin{figure}[!htp]        
     	\centering
     \begin{minipage}{0.85\textwidth}
     	\centering
     	\begin{subfigure}[b]{0.47\textwidth}
     		\centering            \includegraphics[width=0.999\textwidth]{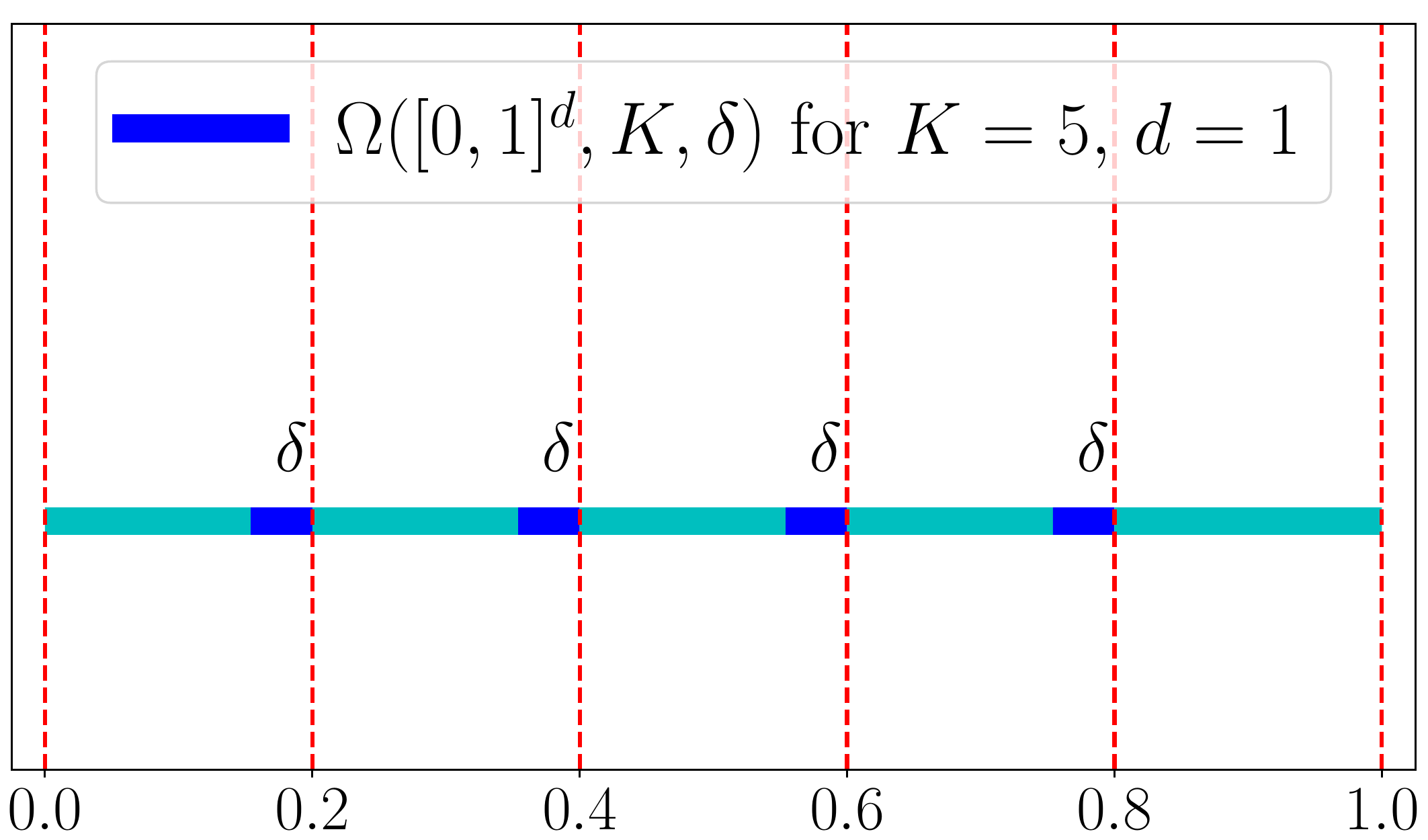}
     		\subcaption{}
     	\end{subfigure}
     \begin{minipage}{0.02\textwidth}
     	\,
     \end{minipage}
     	\begin{subfigure}[b]{0.30641\textwidth}
     		\centering            \includegraphics[width=0.99\textwidth]{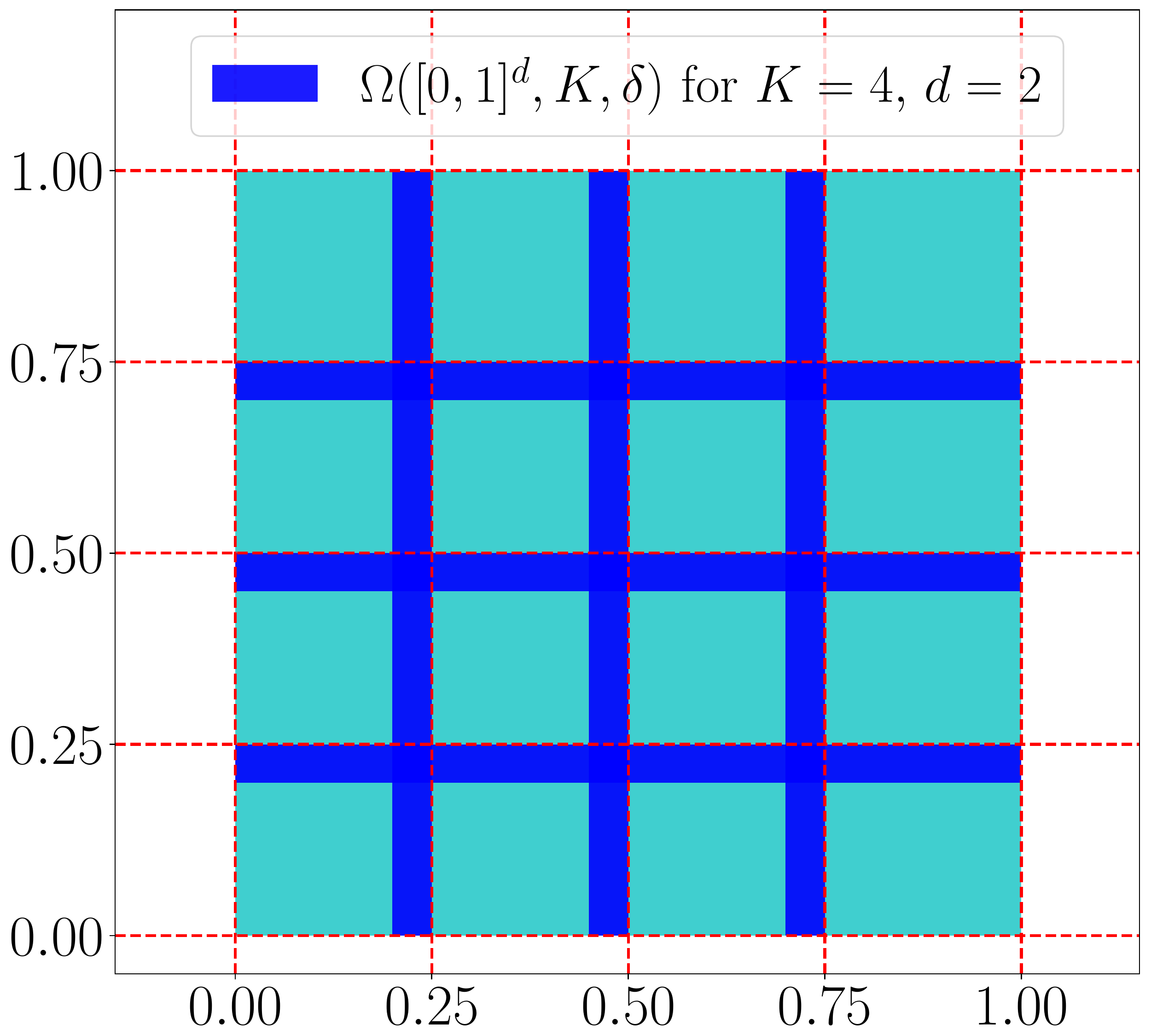}
     		\subcaption{}
     	\end{subfigure}
 	\end{minipage}
     	\caption{Two examples of trifling regions. (a)  $K=5,d=1$. (b) $K=4,d=2$.}
     	\label{fig:region}
     \end{figure}
 
     \item {Let $C^\alpha([0,1]^d)$ be the set containing all H{\"o}lder continuous functions on $[0,1]^d$ of order $\alpha\in (0,1]$. In particular,  the $\lambda$-ball in $C^{\alpha}([0, 1]^d)$ is denoted by $\holder{\lambda}{\alpha}$ for any $\lambda>0$.}
     
     \item We will use $\NNF$ to denote a function implemented by a ReLU FNN for short and use Python-type notations to specify a class of functions implemented by ReLU FNNs with several conditions, e.g., $\NNF(\tn{c}_1;\ \tn{c}_2;\ \cdots;\ \tn{c}_m)$ is a set of functions implemented by  ReLU FNNs satisfying $m$ conditions given by $\{\tn{c}_i\}_{1\leq i\leq m}$, each of which may specify the number of inputs ($\NNinput$), the number of outputs ($\NNoutput$), the total number of neurons in all hidden layers ($\NNneuron$), the number of hidden layers ($\NNdepth$), the total  number of parameters ($\NNparameter$), and the width in each hidden layer ($\NNwidthvec$), the maximum width of all hidden layers ($\NNwidth$), etc. For example, if $\phi\in \NNF(\NNinput=2\NNspace \NNwidthvec=[100,100]\NNspace\NNoutput=1)$,  then $\phi$ is a functions satisfies
     \begin{itemize}
         \item $\phi$ maps from $\R^2$ to $\R$.
         \item $\phi$ can be implemented by a ReLU FNN with two hidden layers and the number of nodes in each hidden layer is $100$.
     \end{itemize}
 
     \item $[n]^L$ is short for $[n,n,\cdots,n]\in \N^{L}$. 
     For example, \[\NNF(\NNinput=d\NNspace\NNwidthvec=[100,100])=\NNF(\NNinput=d\NNspace\NNwidthvec=[100]^2).\]

     \item For a function $\phi\in \NNF(\NNinput=d\NNspace\NNwidthvec=[N_1,N_2,\cdots,N_L]\NNspace\NNoutput=1)$, if we set $N_0=d$ and $N_{L+1}=1$, then the architecture of the network implementing $\phi$ can be briefly described as follows:
    \begin{equation*}
    \begin{aligned}
    \bm{x}=\widetilde{\bm{h}}_0 \myto{2.2}^{\bm{W}_0,\ \bm{b}_0} \bm{h}_1\mathop{\longrightarrow}^{\sigma} \tilde{\bm{h}}_1 \ \cdots\ \myto{2.7}^{\bm{W}_{L-1},\ \bm{b}_{L-1}} \bm{h}_L\mathop{\longrightarrow}^{\sigma} \tilde{\bm{h}}_L \mathop{\myto{2.2}}^{\bm{W}_{L},\ \bm{b}_{L}} \bm{h}_{L+1}=\phi(\bm{x}),
    \end{aligned}
    \end{equation*}
    where $\bm{W}_i\in \R^{N_{i+1}\times N_{i}}$ and $\bm{b}_i\in \R^{N_{i+1}}$ are the weight matrix and the bias vector in the $i$-th \red{(affine)} linear transform $\calL_i$ in $\phi$, respectively, i.e., 
    \[\bm{h}_{i+1} =\bm{W}_i\cdot \tilde{\bm{h}}_{i} + \bm{b}_i\eqqcolon \calL_i(\tilde{\bm{h}}_{i}),\quad \tn{for $i=0,1,\cdots,L$,}\]  
    and
    \[
       \tilde{\bm{h}}_i=\sigma(\bm{h}_i),\quad \tn{for $i=1$, $\dots$, $L$.}
    \]
    In particular, $\phi$ can be represented in a form of function compositions as follows
    \begin{equation*}
        \phi =\calL_L\circ\sigma\circ\calL_{L-1}\circ \sigma\circ \ \cdots \  \circ \sigma\circ\calL_1\circ\sigma\circ\calL_0,
    \end{equation*}
    which has been illustrated in Figure \ref{fig:ReLUeg}.
    \begin{figure}[!htp]        
     	\centering
     		\centering            \includegraphics[width=0.7\textwidth]{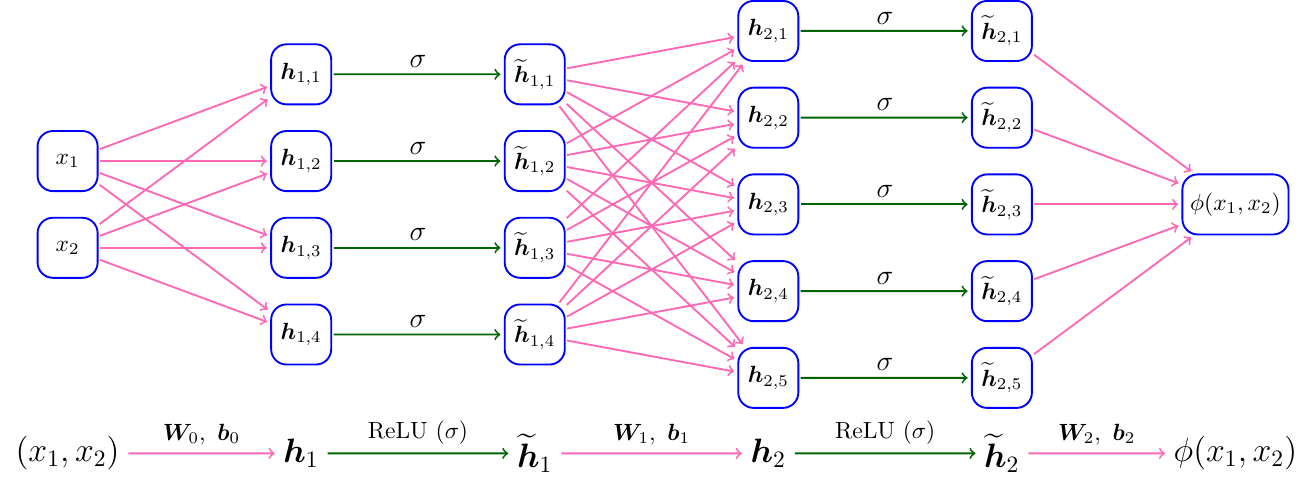}
     	\caption{An example of a ReLU network with width $5$ and depth $2$. }
     	\label{fig:ReLUeg}
     \end{figure}

     \item The expression ``an FNN with width $N$ and depth $L$'' means
     \begin{itemize}
         \item The maximum width of this FNN for all \textbf{hidden} layers is no more than $N$.
         \item The number of \textbf{hidden} layers of this FNN is no more than $L$.
     \end{itemize}
     
    \item For $\theta\in[0,1)$, suppose its binary representation is $\theta=\sum_{\ell=1}^{\infty}\theta_\ell2^{-\ell}$ with $\theta_\ell\in \{0,1\}$, we introduce a special notation $\bin 0.\theta_1\theta_2\cdots \theta_L$ to denote the $L$-term binary representation of $\theta$, i.e., $\bin 0.\theta_1\theta_2\cdots \theta_L\coloneqq\sum_{\ell=1}^{L}\theta_\ell2^{-\ell}$.  
\end{itemize}

\subsection{Proof of Theorem \ref{thm:main}}
\label{sec:main}

We essentially construct piecewise constant functions to approximate continuous functions in the proof. However, it is impossible to construct a piecewise constant function via ReLU FNNs due to the continuity of ReLU FNNs. Thus, we introduce the trifling region  $\Omega([0,1]^d,K,\delta)$, defined in Equation \eqref{eq:triflingRegionDef}, and use ReLU FNNs to implement piecewise constant functions outside of the trifling region.
To prove Theorem \ref{thm:main}, we first establish a theorem showing how to construct ReLU FNNs to pointwisely approximate continuous functions except for the trifling region. 
%\begin{theorem}
%	\label{thm:mainGap}
%	Let $f$ be a given function in $C([0,1]^d)$. For arbitrary $L\in \N^+$ and $N\in \N^+$,
%	there exists a ReLU FNN $\phi$ with width $\max\big\{8d\lfloor N^{1/d}\rfloor+4d,\, 12N+14\big\}$
%	and depth $9L+11$
%	such that $ \|\phi\|_{L^\infty(\R^d)}\le |f(0)|+ \omega_f(\sqrt{d})$ and 
%	\begin{equation*}
%	|f(\bmx)-\phi(\bmx)|\le 2\omega_f(8\sqrt{d}N^{-2/d}L^{-2/d}),\quad \tn{for any $\bmx\in [0,1]^d\backslash\Omega([0,1]^d,K,\delta)$},
%	\end{equation*}
%	where $K=\lfloor N^{1/d}\rfloor^2\lfloor L^{2/d}\rfloor$ and $\delta$ is an arbitrary number in $(0,\tfrac{1}{3K}]$.
%\end{theorem}
\begin{theorem}
	\label{thm:mainGap}
	Given $f\in C([0,1]^d)$, for any $L\in \N^+$ and $N\in \N^+$,
	there exists a function $\phi$ implemented by a  ReLU FNN with width $\max\big\{4d\lfloor N^{1/d}\rfloor+3d,\, 12N+8\big\}$
	and depth $12L+14$
	such that $ \|\phi\|_{L^\infty(\R^d)}\le |f(\bmzero)|+ \omega_f(\sqrt{d})$ and 
	\begin{equation*}
	|f(\bmx)-\phi(\bmx)|\le 18\sqrt{d}\,\omega_f(N^{-2/d}L^{-2/d}),\quad \tn{for any $\bmx\in [0,1]^d\backslash\Omega([0,1]^d,K,\delta)$},
	\end{equation*}
	where $K=\lfloor N^{1/d}\rfloor^2\lfloor L^{2/d}\rfloor$ and $\delta$ is an arbitrary number in $(0,\tfrac{1}{3K}]$.
\end{theorem}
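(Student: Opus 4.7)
The plan is to construct $\phi$ as a piecewise constant approximation of $f$ on the partition of $[0,1]^d$ into $K^d$ small cubes with $K=\lfloor N^{1/d}\rfloor^2\lfloor L^{2/d}\rfloor$, using the trifling region $\Omega([0,1]^d,K,\delta)$ to hide the unavoidable jumps near cube boundaries. The construction factorizes into two composed sub-networks: a quantization map $\bmpsi\colon \R^d\to\R^d$ that sends each $\bmx\in[0,1]^d\setminus\Omega$ to the reference point $\bmbeta(\bmx)/K$ of the unique cube containing it, where $\bmbeta(\bmx)\in\{0,1,\dots,K-1\}^d$, followed by a value-retrieval network $\Phi$ with $\Phi(\bmbeta/K)\approx f(\bmbeta/K)$ for every index $\bmbeta$.

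For the quantization stage I would build a one-dimensional step function with $K$ levels by composition: a coarse step function with $\lfloor N^{1/d}\rfloor$ levels realized in width $\lfloor N^{1/d}\rfloor$ and constant depth (as a linear combination of shifted ReLUs, with the trifling slack $\delta$ ensuring cleanly separated flat regions), followed by a refinement producing $\lfloor N^{1/d}\rfloor\cdot\lfloor L^{2/d}\rfloor$ subdivisions of each coarse interval through an iterated sawtooth-type construction of depth $\calO(L)$. Stacking across the $d$ coordinates gives total width $\calO(d\lfloor N^{1/d}\rfloor)$, accounting for the $4d\lfloor N^{1/d}\rfloor+3d$ bound, and depth $\calO(L)$.

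The value-retrieval stage must encode the $K^d\approx N^2L^2$ real numbers $\{f(\bmbeta/K)\}_{\bmbeta}$ inside a network of width $\calO(N)$ and depth $\calO(L)$. The approach is to first round each value to a binary word of prescribed length so that the quantization error stays below $\omega_f(N^{-2/d}L^{-2/d})$, concatenate all words into one long binary string indexed by $\bmbeta$, and then implement a bit-extraction sub-network that, given the input index $\bmbeta$, returns the requested substring (interpreted as an approximate real value). The essential technical content is a modified version of the Bartlett--Harvey--Liaw--Mehrabian bit-extraction scheme that retrieves on the order of $N^2L^2$ bits using only width $12N+8$ and depth $\calO(L)$; this is precisely what feeds the $N^2L^2$ factor into $K$.

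Composing the two stages and sending the output through a final clipping layer of ReLUs enforces $\|\phi\|_{L^\infty(\R^d)}\le |f(\bmzero)|+\omega_f(\sqrt{d})$. For $\bmx\in[0,1]^d\setminus\Omega$ the pointwise error splits into two parts: replacing $f$ by a constant on a cube of diameter $\sqrt{d}/K\le \sqrt{d}\,N^{-2/d}L^{-2/d}$ contributes at most $\omega_f(\sqrt{d}\,N^{-2/d}L^{-2/d})$, and the bit-extraction quantization error is chosen to be of the same order; the $18\sqrt{d}$ prefactor then comes from pulling the $\sqrt{d}$ out of $\omega_f$ via the inequality $\omega_f(t\,r)\le(\lfloor t\rfloor+1)\omega_f(r)$ and combining the two error contributions. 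I expect the main obstacle to be the bit-extraction sub-network: packing on the order of $N^2L^2$ retrievable bits into width $\calO(N)$ and depth $\calO(L)$ requires using all $N$ neurons in parallel together with the iterated depth $L$, and pinning the precise constants $12L+14$, $12N+8$, and $4d\lfloor N^{1/d}\rfloor+3d$ requires careful layer-by-layer bookkeeping of every sub-block in the composition.
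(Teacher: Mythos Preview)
Your overall architecture---quantize to the cube index, then retrieve an approximate value---matches the paper exactly, and your description of the quantization stage is close enough in spirit to Proposition~3.1. The genuine gap is in the value-retrieval stage.

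You propose to ``round each value to a binary word of prescribed length so that the quantization error stays below $\omega_f(N^{-2/d}L^{-2/d})$'' and then extract those words via bit extraction. Count the bits: there are $K^d\approx N^2L^2$ values, each ranging over $[0,2\omega_f(\sqrt{d})]$, and the required precision is $\omega_f(\sqrt{d}/K)$. By subadditivity $\omega_f(\sqrt{d})\le K\,\omega_f(\sqrt{d}/K)$, so each word needs $\Theta(\log K)=\Theta(\tfrac{1}{d}\log(NL))$ bits, and the total is $\Theta(N^2L^2\log(NL))$ bits. But the bit-extraction machinery with width $\calO(N)$ and depth $\calO(L)$ retrieves only $\calO(N^2L^2)$ bits (this is exactly the bound you yourself cite), so your encoding overflows the budget by a $\log(NL)$ factor. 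You would end up with depth $\calO(L\log(NL))$ or width $\calO(N\log(NL))$, missing the stated constants.

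The paper closes this gap by \emph{not} storing full binary representations. Instead it linearly orders the $K^d$ indices via a map $\psi_1$ into $[0,1]$, inserts an auxiliary set $\calA_2$ of breakpoints between consecutive index blocks, and defines a piecewise linear $g$ whose values at consecutive breakpoints satisfy $|g(\tfrac{j}{2K^d})-g(\tfrac{j-1}{2K^d})|\le \omega_f(\sqrt{d}/K)\eqqcolon\varepsilon$ (the auxiliary points are precisely what enforce this across block boundaries). After quantizing to step $\varepsilon$, consecutive \emph{differences} lie in $\{-1,0,1\}$ and cost only two bits each; the $\calO(N^2L)$ block-starting values are stored \emph{exactly} as real weights via the point-fitting Lemma~3.1. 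This is the content of Proposition~3.2 and its hypothesis $|y_j-y_{j-1}|\le\varepsilon$, and it is the one place where continuity of $f$ is exploited beyond the obvious diameter bound. Without this difference-encoding trick the constants $12N+8$ and $12L+14$ are unreachable.
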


With Theorem \ref{thm:mainGap} that will be proved in Section \ref{sec:mainTriflingRegion}, we can easily prove Theorem \ref{thm:main} for the case $p\in[1,\infty)$. In the early version of this paper, which focuses on continuous functions as target functions, we only considered the case $p\in[1,\infty)$ since it was challenging to control the approximation error in the trifling region. Later in \cite{2020arXiv200103040L} when we considered smooth functions as target functions, we invented a technique that can handle the error in the trifling region as in the lemma below. Therefore, we are now able to control the approximation error for $p=\infty$. The results 
in this paper are for continuous functions, to which the results in \cite{2020arXiv200103040L} are not applicable; the results in \cite{2020arXiv200103040L} characterize how the smoothness of target functions helps to enhance the approximation capacity of ReLU FNNs, which is not addressed in this paper. It is interesting to point out that the approximation rate $\calO(N^{-2/d}L^{-2/d})$ for continuous functions in this paper is even better than the rate $\calO((\tfrac{N}{\ln N})^{-2/d}(\tfrac{L}{\ln L})^{-2/d})$ for functions in $C^1([0,1]^d)$ in \cite{2020arXiv200103040L}.

\begin{lemma}[Theorem $2.1$ of \cite{2020arXiv200103040L}]
	\label{thm:Gap}
	Given $\varepsilon>0$, $N,L,K\in \N^+$, and $\delta\in (0, \tfrac{1}{3K}]$,
	assume $f\in C([0,1]^d)$ and $\tildephi$ can be implemented by a ReLU FNN with width $N$ and depth $L$. If 
	\begin{equation*}
	|f(\bmx)-\tildephi(\bmx)|\le \varepsilon,\quad \tn{for any $\bmx\in [0,1]^d\backslash \Omega([0,1]^d,K,\delta)$,}
	\end{equation*}
	then there exists a function $\phi$  implemented by a new ReLU FNN with width $3^d(N+4)$ and depth $L+2d$ such that 
	\begin{equation*}
	|f(\bmx)-\phi(\bmx)|\le \varepsilon+d\cdot\omega_f(\delta),\quad \tn{for any $\bmx\in [0,1]^d$.}
	\end{equation*}
\end{lemma}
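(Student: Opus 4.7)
The plan is to construct $\phi$ by evaluating $\tildephi$ at an array of shifted inputs and then filtering out the values whose shifted points accidentally fall inside the trifling region via a coordinate-wise three-element median reduction. The target width $3^d(N+4)$ and additional depth $2d$ match this blueprint exactly: $3^d$ parallel copies of $\tildephi$ (each needing width $N$ plus a small constant overhead for carrying the input and implementing the shift), followed by $d$ rounds of a three-element $\mathrm{mid}$, each implemented by a ReLU subnetwork of depth $2$.

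The shift design exploits the separation between bad intervals. For each coordinate $i$, use the three shifted candidates $x_i+\delta s_i$ with $s_i\in\{-1,0,1\}$, combined with a small clipping to $[0,1]$ near the boundary. Because $\delta\le \tfrac{1}{3K}$, adjacent bad intervals along coordinate $i$ are separated by at least $\tfrac{1}{K}-\delta\ge 2\delta$, so among the three values $\{x_i-\delta,\, x_i,\, x_i+\delta\}$ at most one lies in a bad interval for coordinate $i$. Consequently, for any shift tuple $\bm{s}\in\{-1,0,1\}^d$ such that every coordinate-wise candidate avoids the bad intervals of its coordinate, the shifted point $\bmx+\delta\bm{s}$ lies in $[0,1]^d\setminus\Omega([0,1]^d,K,\delta)$, and the hypothesis gives $|\tildephi(\bmx+\delta\bm{s})-f(\bmx+\delta\bm{s})|\le\varepsilon$. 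Boundary points are harmless: the trifling region is confined to the interior $[\delta,\,1-\delta]$, so any clipped shift near the boundary automatically lies in a safe region.

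The architecture consists of Stage~1 (depth $L$, width $3^d(N+4)$), which runs the $3^d$ shifted copies of $\tildephi$ in parallel; the $+4$ channels per copy carry the original coordinates of $\bmx$ forward and implement the shift-plus-clip preprocessing. Stage~2 (depth $2d$, same width) performs $d$ rounds of the three-element median $\mathrm{mid}(a,b,c)=a+b+c-\max(a,b,c)-\min(a,b,c)$, each round grouping the surviving values into triples that differ only in the shift of the active coordinate. Via $\max(a,b)=\tfrac{1}{2}(a+b)+\tfrac{1}{2}|a-b|$ and $|a-b|=\sigma(a-b)+\sigma(b-a)$, each $\mathrm{mid}$ block costs exactly two ReLU layers, so the total extra depth is $2d$.

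The error analysis proceeds by induction on the round index $k$, with invariant
\[
\bigl|\psi_k(\bmx;s_{k+1},\dots,s_d)-f(x_1,\dots,x_k,\,x_{k+1}+\delta s_{k+1},\dots,x_d+\delta s_d)\bigr|\le \varepsilon+k\,\omega_f(\delta),
\]
valid for every remaining tuple $(s_{k+1},\dots,s_d)$ in which each $s_j$ makes $x_j+\delta s_j$ avoid the bad intervals of coordinate $j$. The base case $k=0$ is the hypothesis on $\tildephi$. For the inductive step, at least two of the three choices $s_k\in\{-1,0,1\}$ are good for coordinate $k$; for each such good $s_k$, the inductive hypothesis combined with $|f(\dots,x_k+\delta s_k,\dots)-f(\dots,x_k,\dots)|\le \omega_f(\delta|s_k|)\le \omega_f(\delta)$ places $\psi_{k-1}(\bmx;s_k,s_{k+1},\dots,s_d)$ within $\varepsilon+k\,\omega_f(\delta)$ of the fixed reference $f(x_1,\dots,x_k,x_{k+1}+\delta s_{k+1},\dots)$. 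Since the three-element median of three numbers, at least two of which lie within distance $E$ of a common reference, itself lies within distance $E$ of that reference, the invariant passes to $\psi_k$. Setting $k=d$ yields $|\phi(\bmx)-f(\bmx)|\le \varepsilon+d\,\omega_f(\delta)$ for every $\bmx\in[0,1]^d$. The main obstacle is the delicate bookkeeping: ensuring that the constant overhead stays exactly $+4$ in width and $+2d$ in depth, verifying that the clipped shifts near $\partial[0,1]^d$ do not destroy the ``at most one bad per triple'' property, and making sure to use the tight $\omega_f(\delta|s_k|)\le \omega_f(\delta)$ step (rather than a shift of size $2\delta$), which is precisely what pins the final factor to $d$ instead of $2d$.
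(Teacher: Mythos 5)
This lemma is not proved in the paper itself---it is imported verbatim from \cite{2020arXiv200103040L}---and your construction is essentially the proof given there: evaluate $\tildephi$ at the $3^d$ shifted points $\bmx+\delta\bm{s}$, $\bm{s}\in\{-1,0,1\}^d$, in parallel, then collapse one shift coordinate per round with a two-layer three-point median, the invariant $\varepsilon+k\,\omega_f(\delta)$ closing because $\delta\le\tfrac{1}{3K}$ forces at most one member of each triple $\{x_k-\delta,\,x_k,\,x_k+\delta\}$ into a bad interval or outside $[0,1]$, and a median is insensitive to one outlier. The argument is sound; the only cosmetic quibbles are that no clipping to $[0,1]$ is needed (the median already absorbs the single uncontrolled value, and a literal clip would cost extra depth beyond $L+2d$), and the per-copy width overhead pays for the median circuitry on the surviving values rather than for carrying the coordinates of $\bmx$, which the reduction stage never uses.
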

Now we are ready to prove Theorem \ref{thm:main} by assuming Theorem \ref{thm:mainGap} is true, which  will be proved later in Section \ref{sec:proofMainGap}.
\begin{proof}[Proof of Theorem \ref{thm:main}]	
	Let us first consider the case $p\in [1,\infty)$. 
	We may assume $f$ is not a constant function since it is  a trivial case. Then $\omega_f(r)>0$ for any $r>0$. Set $K=\lfloor N^{1/d}\rfloor^2\lfloor L^{2/d}\rfloor$ and choose a small $\delta\in (0,\tfrac{1}{3K}]$ such that
	\begin{equation*}
	\begin{split}
	Kd\delta\big( \red{2}|f(\bmzero)|+\red{2}\omega_f(\sqrt{d})\big)^p
	&=\lfloor N^{1/d}\rfloor^2\lfloor L^{2/d}\rfloor d\delta \big( 2|f(\bmzero)|+2\omega_f(\sqrt{d})\big)^p\\
	&\le \big(\omega_f(N^{-2/d}L^{-2/d})\big)^p. 
	\end{split}
	\end{equation*}
	By Theorem \ref{thm:mainGap}, there exists a function $\phi$ implemented by a ReLU FNN with width \[\max\big\{4d\lfloor N^{1/d}\rfloor+3d,\, 12N+8\big\}\le 12\max \big\{d\lfloor N^{1/d}\rfloor,\, N+1\big\}\]
	and depth $12L+14$ such that \red{$\|\phi\|_{L^\infty(\R^d)}\le |f(\bmzero)|+\omega_f(\sqrt{d})$} and 
	\begin{equation*}
	|f(\bmx)-\phi(\bmx)|\le 18\sqrt{d}\,\omega_f(N^{-2/d}L^{-2/d}),\quad \tn{for any $\bmx\in [0,1]^d\backslash\Omega([0,1]^d,K,\delta)$},
	\end{equation*}
	It follows from $\mu(\Omega([0,1]^d,K,\delta))\le Kd\delta$ and \red{$\|f\|_{L^\infty([0,1]^d)}\le |f(\bmzero)|+\omega_f(\sqrt{d})$} that
	\begin{equation*}
	\begin{split}
	\|f-\phi\|_{L^p([0,1]^d)}^p
	&=\int_{\Omega([0,1]^d,K,\delta)}|f(\bmx)-\phi(\bmx)|^p\tn{d}\bmx+\int_{[0,1]^d\backslash\Omega([0,1]^d,K,\delta)}|f(\bmx)-\phi(\bmx)|^p\tn{d}\bmx\\
	&\le Kd\delta\big( \red{2}|f(\bmzero)|+\red{2}\omega_f(\sqrt{d})\big)^p+ \big(18\sqrt{d}\,\omega_f(N^{-2/d}L^{-2/d})\big)^p\\
	&\le \big(\omega_f(N^{-2/d}L^{-2/d})\big)^p+ \big(18\sqrt{d}\,\omega_f(N^{-2/d}L^{-2/d})\big)^p\\
	&\le \big(19\sqrt{d}\,\omega_f(N^{-2/d}L^{-2/d})\big)^p.
	\end{split}
	\end{equation*}
	Hence, $\|f-\phi\|_{L^p([0,1]^d)}\le 19\sqrt{d}\,\omega_f(N^{-2/d}L^{-2/d})$.
	
	Next, let us discuss the case $p=\infty$. Set $K=\lfloor N^{1/d}\rfloor^2\lfloor L^{2/d}\rfloor$ and choose a small $\delta\in(0,\tfrac{1}{3K}]$ such that 
	\begin{equation*}
	d\cdot \omega_f(\delta)\le \omega_f(N^{-2/d}L^{-2/d}).
	\end{equation*}	
	By Theorem \ref{thm:mainGap}, there exists a function $\tildephi$ implemented   by a  ReLU FNN with width $\max\big\{4d\lfloor N^{1/d}\rfloor+3d,\, 12N+8\big\}$
	and depth $12L+14$ such that
	\begin{equation*}
	|f(\bmx)-\tildephi(\bmx)|\le 18\sqrt{d}\,\omega_f(N^{-2/d}L^{-2/d})\coloneqq \varepsilon,\quad \tn{for $\bmx\in [0,1]^d\backslash\Omega([0,1]^d,K,\delta)$},
	\end{equation*}
	By Lemma \ref{thm:Gap}, there exists a  function $\phi$ implemented by a ReLU FNN  with width 
	\begin{equation*}
	3^d\Big(\max\big\{4d\lfloor N^{1/d}\rfloor+3d,\, 12N+8\big\}+4\Big)\le 3^{d+3}\max\big\{d\lfloor N^{1/d}\rfloor,\, N+1\big\}
	\end{equation*}
	and depth $12L+14+2d$ such that 
	\begin{equation*}
	|f(\bmx)-\phi(\bmx)|\le \varepsilon+d\cdot \omega_f(\delta)\le  19\sqrt{d}\,\omega_f(N^{-2/d}L^{-2/d}),\quad \tn{for any $\bmx\in [0,1]^d$}.
	\end{equation*}
	So we finish the proof.
\end{proof}

%%%%%%%%%%%%%%%%%%%%%%%%%%%%%%%%%%%%%%%
%%%%%%%%%%%%%%%%%%%%%%%%%%%%%%%%%%%%%%%%%%%
\subsection{Optimality of Theorem \ref{thm:main}}
This section will show that the approximation rate in Theorem \ref{thm:main} is nearly tight and there is no room to improve for the function class $\holder{\lambda}{\alpha}$. 
Theorem \ref{thm:lowInfty} below shows that the approximation rate  $\calO(\omega_f(N^{-(2/d+\rho)}L^{-(2/d+\rho)}))$ for any $\rho>0$ is unachievable, implying the approximation rate in Theorem \ref{thm:main} is nearly tight for the function class $\holder{\lambda}{\alpha}$.

\begin{theorem}
    \label{thm:lowInfty}
    Given any $ \rho>0$ and $ C>0$, there exists $f\in \holder{\lambda}{\alpha}$ such that,  for any $J_0>0$, there exist $N,L\in \N$ with $NL\ge J_0$ satisfying 
    \begin{equation*}
    \inf_{\phi\in \NN(\NNinput=d\NNspace\NNmaxwidth\le N\NNspace\NNlayer\le L)} \|\phi-f\|_{L^\infty([0,1]^d)}
    \ge C \lambda  N^{-(2\alpha/d+\rho)}L^{-(2\alpha/d+\rho)}.
    \end{equation*}
\end{theorem}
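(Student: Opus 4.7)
The plan is to prove the lower bound by contradiction, combining the tight VC/pseudo-dimension bound for ReLU FNNs with the classical metric entropy of the H\"older ball. Suppose the conclusion fails, so there exist $\rho,C>0$ such that every $f\in\holder{\lambda}{\alpha}$ admits, for all sufficiently large $NL$, an approximation by some $\phi\in\NN(\NNinput=d\NNspace\NNwidth\le N\NNspace\NNdepth\le L)$ with error strictly less than $C\lambda N^{-(2\alpha/d+\rho)}L^{-(2\alpha/d+\rho)}$. The aim is to show that this forces too many H\"older functions to be simultaneously approximable by too few networks.

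The two quantitative ingredients I would assemble are: (i) by the Bartlett--Harvey--Liaw--Mehrabian bound \cite{pmlr-v65-harvey17a}, the pseudo-dimension of $\mathcal{F}_{N,L}:=\NN(\NNinput=d\NNspace\NNwidth\le N\NNspace\NNdepth\le L)$ is at most $c_0 N^2L^2\log(NL)$, whence by the standard pseudo-dimension-to-covering conversion its $L^\infty$ $\epsilon$-covering number satisfies $\log\mathcal{N}_\epsilon(\mathcal{F}_{N,L})\le c_0' N^2L^2\log(NL)\log(\lambda/\epsilon)$; and (ii) the classical Kolmogorov--Tikhomirov estimate yields a packing lower bound $\log\mathcal{P}_{2\epsilon}(\holder{\lambda}{\alpha})\ge c_1(\lambda/\epsilon)^{d/\alpha}$ in the $L^\infty$ metric.

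If $\mathcal{F}_{N,L}$ were an $\epsilon$-net for the entire H\"older ball, then its $\epsilon$-covering number would dominate the $2\epsilon$-packing number of $\holder{\lambda}{\alpha}$. Substituting $\epsilon=C\lambda N^{-(2\alpha/d+\rho)}L^{-(2\alpha/d+\rho)}$ and simplifying reduces this to an inequality of the shape $(NL)^{d\rho/\alpha}\le c_2(\log NL)^2$, which fails once $NL$ exceeds an explicit threshold $J_*$. Consequently, for every such $(N,L)$ there exists a bad function $f_{N,L}\in\holder{\lambda}{\alpha}$ with $\inf_{\phi\in\mathcal{F}_{N,L}}\|\phi-f_{N,L}\|_{L^\infty([0,1]^d)}\ge\epsilon$.

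Finally, I would upgrade this $(N,L)$-dependent bad function to a single $f$ witnessing the lower bound along an infinite subsequence. Since $\holder{\lambda}{\alpha}$ is compact in $C([0,1]^d)$ by Arzel\`a--Ascoli, the sequence $\{f_{N_k,L_k}\}$ along any chosen pairs with $N_kL_k\to\infty$ admits a uniformly convergent subsequence with limit $f\in\holder{\lambda}{\alpha}$; the triangle inequality then transfers the approximation lower bound from $f_{N_k,L_k}$ to $f$ along that subsequence, at the cost of a factor $1/2$ that can be absorbed by proving the preceding step with $2C$ in place of $C$. I expect the main obstacle to be the careful bookkeeping of the logarithmic factors in the pseudo-dimension-to-covering conversion: these logs are precisely what forces $\rho>0$ (rather than $\rho\ge 0$) in the statement, so they must be tracked quantitatively so that the inequality $(NL)^{d\rho/\alpha}\lesssim(\log NL)^2$ genuinely becomes violated, rather than merely tight, for each $\rho>0$.
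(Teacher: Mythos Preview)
Your route differs from the paper's: the paper does not pass through covering numbers at all but argues directly with VC-dimension. It builds an explicit family $\{f_\chi\}_{\chi\in\{-1,1\}^{K^d}}$ of H\"older functions realizing every sign pattern on a grid of $K^d$ points with $K=\lfloor(NL)^{2/d+\rho/(2\alpha)}\rfloor$; under the negated statement each $f_\chi$ has an approximant $\phi_\chi\in\mathcal{F}_{N,L}$ close enough to preserve the signs on the grid, so $\{\phi_\chi\}$ shatters those $K^d$ points, contradicting the upper bound $\mathrm{VCDim}(\mathcal{F}_{N,L})=O(N^2L^2\log(NL))$ from \cite{pmlr-v65-harvey17a}.

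Your proposal has two gaps, one repairable and one not.

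The repairable one is step~(i): pseudo-dimension does \emph{not} control the $L^\infty([0,1]^d)$ covering number of $\mathcal{F}_{N,L}$. Already a fixed two-neuron one-layer ReLU class has bounded pseudo-dimension yet contains, for every $n$, arbitrarily many functions pairwise $1$-separated in $L^\infty([0,1])$ (e.g.\ the ramps $x\mapsto\sigma(nx-k)-\sigma(nx-k-1)$), so its uniform covering number at any scale below $1$ is infinite. What \emph{is} true is that pseudo-dimension, via fat-shattering, controls $L^\infty$ covering on any \emph{finite} point set up to polylogarithmic factors in the number of points; since both the H\"older packing and the approximation statement can be witnessed on a grid of $\sim(\lambda/\epsilon)^{d/\alpha}$ points, restricting everything to that grid salvages the comparison with a harmless extra $\log$. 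This is exactly why the paper's shattering argument is the cleaner tool: it lives on a finite grid from the outset.

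The serious gap is the Arzel\`a--Ascoli step. To transfer the lower bound from $f_{N_k,L_k}$ to the limit $f$ you need $\|f-f_{N_k,L_k}\|_{L^\infty}\le C\lambda(N_kL_k)^{-(2\alpha/d+\rho)}$, but compactness gives convergence with no rate, while the right-hand side tends to zero. Replacing $C$ by $2C$ does not help: you still need the distance to the limit to decay at least as fast as the vanishing tolerance, and nothing forces this. (Note also that your contradiction hypothesis is never actually invoked: the entropy comparison is a direct argument producing an $(N,L)$-dependent bad function, so the whole burden really does rest on this extraction step.) The paper sidesteps the issue because its contradiction is reached at the level of VC-dimension, using only the \emph{finitely many} $f_\chi$ at the relevant scale, without ever needing a single universal bad $f$. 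If you want to keep an entropy route, you need a different device to pass from ``for each $(N,L)$ some $f_{N,L}$ is bad'' to ``some fixed $f$ is bad for arbitrarily large $(N,L)$''---an explicit lacunary construction or a Baire-category argument, say---but compactness alone will not do it.
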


\vspace{6pt}
In fact, we can show a stronger result than Theorem \ref{thm:lowInfty}. Under the same conditions as in Theorem \ref{thm:lowInfty}, for any    
$\calH\in [0,1]^d$ with $\mu(\calH)\le 2^{-(d+K^d+1)}K^{-d}$, where $K=\lfloor (NL)^{2/d+\rho/(2\alpha)}\rfloor$, it can be proved that
\begin{equation}
 \label{eq:negativeResultH}  
  \inf_{\phi\in \NN(\NNinput=d\NNspace\NNmaxwidth\le N\NNspace\NNlayer\le L)} \|\phi-f\|_{L^\infty([0,1]^d\backslash\calH)}
    \ge C \lambda  N^{-(2\alpha/d+\rho)}L^{-(2\alpha/d+\rho)}.
\end{equation}
 We will prove \eqref{eq:negativeResultH}  by contradiction, then Theorem \ref{thm:lowInfty} holds as a consequence. 
 %The result of \eqref{eq:negativeResultH} will be used later in Section \ref{sec:NormDef}. 
 Assuming Equation \eqref{eq:negativeResultH} is false, we have the following claim.
\begin{claim}
    \label{thm:lowClaim}
    There exist $\rho>0$ and $C>0$ such that given any $f\in \holder{\lambda}{\alpha}$, there exists $J_0=J_0(\rho,C,f)>0$ such that, for any $N,L\in \N$ with $NL\ge J_0$, there exist $\phi\in \NN(\NNinput=d\NNspace\NNmaxwidth\le N\NNspace\NNlayer\le L)$ and $\calH\in [0,1]^d$ with $\mu(\calH)\le 2^{-(d+K^d+1)}K^{-d}$, where $K=\lfloor (NL)^{2/d+\rho/(2\alpha)}\rfloor$, satisfying
    \[
    \|f-\phi\|_{L^\infty([0,1]^d\backslash\calH)} \le C \lambda  N^{-(2\alpha/d+\rho)}L^{-(2\alpha/d+\rho)}.
    \]
\end{claim}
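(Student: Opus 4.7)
The plan is to deduce Claim \ref{thm:lowClaim} by a purely logical manipulation of the contradiction hypothesis, supplemented by the characterization of an infimum in terms of near-minimizers.

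First, I would write Equation \eqref{eq:negativeResultH} out in fully quantified form, inheriting the quantifier structure of Theorem \ref{thm:lowInfty}: for every $\rho > 0$ and every $C > 0$ there is a function $f \in \holder{\lambda}{\alpha}$ such that for every $J_0 > 0$ one can find $N, L \in \N$ with $NL \ge J_0$, and for every measurable $\calH \subseteq [0,1]^d$ with $\mu(\calH) \le 2^{-(d+K^d+1)} K^{-d}$ (where $K = \lfloor (NL)^{2/d + \rho/(2\alpha)} \rfloor$) the bound
\[
\inf_{\phi \in \NN(\NNinput=d\NNspace\NNmaxwidth\le N\NNspace\NNlayer\le L)} \|\phi - f\|_{L^\infty([0,1]^d \backslash \calH)} \;\ge\; C \lambda N^{-(2\alpha/d + \rho)} L^{-(2\alpha/d + \rho)}
\]
holds. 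Negating this statement — which is exactly the working hypothesis just before Claim \ref{thm:lowClaim} — produces existentially quantified $\rho, C > 0$ and, for every $f \in \holder{\lambda}{\alpha}$, some $J_0 = J_0(\rho, C, f) > 0$ such that for every $N, L$ with $NL \ge J_0$ there is a set $\calH$ obeying the measure bound for which the infimum is strictly below the displayed right-hand side.

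Second, I would invoke the definition of infimum: if the infimum of $\|\phi - f\|_{L^\infty([0,1]^d \backslash \calH)}$ over $\phi \in \NN(\NNinput=d\NNspace\NNmaxwidth\le N\NNspace\NNlayer\le L)$ is strictly less than $C \lambda N^{-(2\alpha/d+\rho)} L^{-(2\alpha/d+\rho)}$, then there exists a particular admissible $\phi$ whose $L^\infty$-distance to $f$ on $[0,1]^d \backslash \calH$ is at most $C \lambda N^{-(2\alpha/d+\rho)} L^{-(2\alpha/d+\rho)}$. This supplies precisely the network $\phi$ and the set $\calH$ whose joint existence is asserted in Claim \ref{thm:lowClaim}.

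No substantive mathematical obstacle appears in these two steps — the claim is essentially a formal restatement of the contradiction hypothesis, bridging the language of infima (used in Theorem \ref{thm:lowInfty}) and the language of existential network constructions (more convenient for the forthcoming counting argument). The genuine difficulty is deferred to the subsequent stage, where Claim \ref{thm:lowClaim} will be combined with the nearly tight VC-dimension bounds of \cite{pmlr-v65-harvey17a} to manufacture a contradiction, thereby establishing \eqref{eq:negativeResultH} and hence Theorem \ref{thm:lowInfty}.
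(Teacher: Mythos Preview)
Your proposal is correct and matches the paper's approach exactly: the paper simply states Claim \ref{thm:lowClaim} as the formal negation of \eqref{eq:negativeResultH} (writing ``Assuming Equation \eqref{eq:negativeResultH} is false, we have the following claim'') without spelling out the quantifier manipulation or the passage from a strict infimum bound to an existential witness, whereas you make both of these steps explicit. The substantive work, as you correctly note, lies in the subsequent disproof of the claim via VC-dimension.
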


Now let us disprove this claim to show Theorem \ref{thm:lowInfty} and Equation \eqref{eq:negativeResultH}  are true.
\begin{proof}[Disproof of Claim \ref{thm:lowClaim}]
Without the loss of generality, we assume $\lambda  =1$; in the case of $\lambda  \neq 1$, the proof is similar.
    We will disprove Claim \ref{thm:lowClaim} using the VC dimension.
    Recall that the VC dimension of a class of functions is defined as the cardinality of the largest set of points that this class of functions can shatter.
    Denote the VC dimension of a function set $\mathscr{F}$ by $\tn{VCDim} (\mathscr{F})$. By \cite{pmlr-v65-harvey17a} and the fact
    \begin{equation*}
        \NN(\NNmaxwidth\le N\NNspace\NNdepth\le L)\subseteq \NN\big(\NNparameter\le (LN+d+2)(N+1)\big),
    \end{equation*}
    there exists $C_1>0$ such that
    \begin{equation}
    \label{eq:NLVcdimUpperBound}
    \begin{split}
    &\quad \tn{VCDim} \big(\NN(\NNinput=d\NNspace\NNmaxwidth\le N\NNspace\NNlayer\le L)\big)\\
    & \le C_1(LN+d+2)(N+1)L\ln \big( (LN+d+2)(N+1)\big)\\
    &\eqqcolon b_u(N,L).
    \end{split}
    \end{equation}

    Then we will use Claim \ref{thm:lowClaim} to estimate a lower bound of 
    \begin{equation}\label{eqn:vcd}
    \tn{VCDim} \big(\NN(\NNinput=d\NNspace\NNmaxwidth\le N\NNspace\NNlayer\le L)\big),
    \end{equation}
     and this lower bound is asymptotically larger than $b_u(N,L)$, which leads to a contradiction. 
     
     More precisely, we will construct $\{f_\chi:\chi\in \mathscr{B}\}\subseteq \holder{1}{\alpha}$, which can shatter $b_\ell(N,L)\coloneqq K^d$ points, where $\mathscr{B}$ is a set defined later. Then by Claim \ref{thm:lowClaim}, there exists $\{\phi_\chi:\chi\in\mathscr{B}\}$ such that this set can shatter $b_\ell(N,L)$ points. Finally, $b_\ell(N,L)=K^d=\lfloor (NL)^{2/d+\rho/(2\alpha)}\rfloor^d$ is asymptotically larger than $b_u(N,L)=C_1(LN+d+2)(N+1)L\ln \big( (LN+d+2)(N+1)\big)$, which leads to a contradiction. More details can be found below.
    
    \mystep{1}{Construct $\{f_\chi:\chi\in \mathscr{B}\}\subseteq\holder{1}{\alpha}$ that scatters $b_\ell(N,L)$ points.}

    Divide $[0,1]^d$ into $K^d$ non-overlapping sub-cubes $\{Q_{\bm{\beta}}\}_{\bm{\beta}}$ as follows: 
     \[Q_{\bm{\beta}}\coloneqq \big\{{\bm{x}}=[x_1,x_2,\cdots,x_d]^T\in[0,1]^d: x_i\in [\tfrac{\beta_i-1}{K},\tfrac{\beta_i}{K}],\ i=1,2,\cdots,d\big\},\] for any index vector  ${\bm{\beta}}= [\beta_1,\beta_2,\cdots,\beta_d]^T\in \{1,2,\cdots,K\}^d$.
     
    Let $Q (\bm{x}_0,\eta)\subseteq [0,1]^d$ be a hypercube, whose center and sidelength are ${\bm{x}}_0$ and $\eta$, respectively. Then we define a function $\zeta_Q$ on $[0,1]^d$ corresponding to $Q=Q (\bm{x}_0,\eta)\subseteq [0,1]^d$  such that:
    \begin{itemize}
        \item $\zeta_{Q} (\bm{x}_0)= (\eta/2)^\alpha/2$;
        \item $\zeta_{Q} (\bm{x})=0$ for any $\bmx\notin Q \backslash\partial Q $, where $\partial Q $ is the boundary of $Q$;
        \item $\zeta_{Q }$ is linear on the line that connects ${\bm{x}}_0$ and ${\bm{x}}$, for any ${\bm{x}}\in \partial Q$.
    \end{itemize}

     Define
     \begin{equation*}
         \mathscr{B}\coloneq \big\{\chi: \chi \tn{ is  a map from } \{1,2,\cdots,K\}^d \tn{   to } \{-1,1\}\big\}.
     \end{equation*}
     For each $\chi\in \mathscr{B}$, we define
    \begin{equation*}
    f_\chi (\bm{x})\coloneq \sum_{{\bm{\beta}}\in \{1,2,\cdots,K\}^d} \chi ({\bm{\beta}})\zeta_{Q_{\bm{\beta}}} (\bm{x}),
    \end{equation*}
    where $\zeta_{Q_{\bm{\beta}}} (\bm{x})$ is the associated function introduced just above. 
    It is easy to check that $ \{f_\chi:\chi\in \mathscr{B}\}\subseteq \holder{1}{\alpha}$  can shatter 
    $b_\ell(N,L)= K^d$
    points.
    
    \mystep{2}{Construct $\{\phi_\chi:\chi\in\mathscr{B}\}$ that scatters $b_\ell(N,L)$ points.}

    By Claim \ref{thm:lowClaim}, there exist $\rho>0$ and $C_2>0$ such that, for any $f_\chi\in \{f_\chi:\chi\in \mathscr{B}\}$ there exists $J_\chi>0$ such that for all $N,L\in \N$ with $NL\ge J_\chi$, there exist $\phi_\chi\in \NN(\NNinput=d\NNspace\NNmaxwidth\le N\NNspace\NNlayer\le L)$ and $\calH_\chi$ with $\mu(\calH_\chi)\le 2^{-(d+K^d+1)}K^{-d}$ such that 
    \begin{equation*}
    \label{eq:fMinusPhi1}
       |f_\chi(\bmx)-\phi_\chi(\bmx)|\le   C_2(NL)^{-\alpha (2/d+\rho/\alpha)}, \quad \tn{for any } \bmx \in [0,1]^d\backslash\calH_\chi . 
    \end{equation*}
    
    Set $\calH=\cup_{\chi\in \mathscr{B}} \calH_\chi$ and $J_1=\max_{\chi\in \mathscr{B}} J_\chi$. Then it holds that
    \begin{equation}
    \label{eq:HUpperBound}
       \mu(\calH)\le 2^{K^d}2^{-(d+K^d+1)}K^{-d}=(2K)^{-d}/2. 
    \end{equation}
     It follows that for all $\chi\in \mathscr{B}$ and $N,L\in \N$ with $NL\ge J_1$, we have
     \begin{equation}
    \label{eq:fMinusPhi}
       |f_\chi(\bmx)-\phi_\chi(\bmx)|\le   C_2(NL)^{-\alpha (2/d+\rho/\alpha)}, \quad \tn{for any } \bmx \in [0,1]^d\backslash\calH. 
    \end{equation}   

     For each index vector ${\bm{\beta}}\in \{1,2,\cdots,K\}^d$ and any $\bm{x} \in \tfrac{1}{2}Q_{\bm{\beta}}$, where $\tfrac12Q_\bmbeta$ denotes the cube whose sidelength is half of that of $Q_\bmbeta$ sharing the same center of $Q_\bmbeta$, since $Q_{\bm{\beta}}$ has a sidelength $\tfrac{1}{K}=\lfloor (NL)^{2/d+\rho/ (2\alpha)}\rfloor^{-1}$, we have
        \begin{equation}
        \label{eq:fBetaLowerBound}
        |f_\chi (\bm{x})|=|\zeta_{Q_{\bm{\beta}}}(\bm{x})|\ge |\zeta_{Q_{\bm{\beta}}} (\bm{x}_{Q_{\bm{\beta}}})|/2= \left (\tfrac{1}{2K}\right)^\alpha/4=\tfrac{1}{2^{2+\alpha}}\lfloor (NL)^{2/d+\rho/ (2\alpha)}\rfloor^{-\alpha},
        \end{equation}
        where $\bm{x}_{Q_{\bm{\beta}}}$ is the center of $Q_{\bm{\beta}}$.
         For fixed $d$, $\alpha$, and $\rho$, there exists $J_2>0$ large enough such that, for any $N,L\in \N$ with $NL\ge J_2$, we have
        \begin{equation}
        \label{eq:J2}
        \tfrac{1}{2^{2+\alpha}}\lfloor (NL)^{2/d+\rho/ (2\alpha)}\rfloor^{-\alpha}> C_2(NL)^{-\alpha (2/d+\rho/\alpha)}.
        \end{equation}
    
    By Equation \eqref{eq:HUpperBound}, for any $\bm{\beta}\in \{1,2,\cdots,K\}^d$, we have
    \begin{equation*}
    \mu(\calH) \le (2K)^{-d}/2< (2K)^{-d}=\mu (\tfrac{1}{2}Q_{\bm{\beta}}),
    \end{equation*}
    which means
     $(\tfrac12Q_{\bm{\beta}})\cap ([0,1]^d\backslash\calH)$ is not empty.
    Therefore, there exists $\bmx_{\bm{\beta}}\in (\tfrac12Q_{\bm{\beta}})\cap ([0,1]^d\backslash\calH)$ for each $\bm{\beta}\in \{1,2,\cdots,K\}^d$
    such that
    \begin{equation*}
    |f_\chi (\bmx_{\bm{\beta}})|\ge  \tfrac{1}{2^{2+\alpha}}\lfloor (NL)^{2/d+\rho/ (2\alpha)}\rfloor^{-\alpha}
    >  C_2(NL)^{-\alpha (2/d+\rho/\alpha)}
    \ge  |f_\chi (\bmx_{\bm{\beta}})-\phi_\chi (\bmx_{\bm{\beta}})|,\label{inq3}
    \end{equation*}
    for any $N,L\in\N$ with $NL\ge J_0=\max\{J_1,J_2\}$, where the first, the second, and the last inequalities come from \eqref{eq:fBetaLowerBound}, \eqref{eq:J2}, and \eqref{eq:fMinusPhi}, respectively. In other words, for any $\chi\in\mathscr{B}$ and ${\bm{\beta}}\in \{1,2,\cdots,K\}^d$, $f_\chi (\bmx_{\bm{\beta}})$ and $\phi_\chi (\bmx_{\bm{\beta}})$ have the same sign. Then  $ \{\phi_\chi:\chi\in\mathscr{B}\}$ shatters $\big\{\bm{x_\beta}:\bm{\beta}\in \{1,2,\cdots,K\}^d\big\}$ since $\{f_\chi:\chi\in\mathscr{B}\}
    $ shatters $\big\{\bm{x_\beta}:\bm{\beta}\in \{1,2,\cdots,K\}^d\big\}$ as discussed in Step $1$. Hence, 
    \begin{equation}
    \label{eq:NLVcdimLowerBound}
    \tn{VCDim}\big( \{\phi_\chi:\chi\in\mathscr{B}\} \big)\geq K^d=b_\ell(N,L),
    \end{equation}
    for any $N,L\in\N$ with $NL\ge J_0$,
    
    \mystep{3}{Contradiction.}
    By Equation \eqref{eq:NLVcdimUpperBound} and \eqref{eq:NLVcdimLowerBound}, for any $N,L\in\N$ with $NL\ge J_0$, we have
    \begin{equation*}
    \begin{split}
    b_\ell(N,L)&\le \tn{VCDim}\big(\{\phi_\chi:\chi\in\mathscr{B}\}\big)
            \\&\le \tn{VCDim}\big(\NN(\NNinput=d\NNspace\NNmaxwidth\le N\NNspace\NNlayer\le L)\big)
            \le b_u(N,L),
    \end{split}
    \end{equation*}
    implying that
        \begin{equation*}
            \lfloor (NL)^{2/d+\rho/(2\alpha)}\rfloor ^d
            \le 
            C_1(LN+d+2)(N+1)L\ln \big( (LN+d+2)(N+1)\big),
    \end{equation*}
    which is a contradiction for sufficiently large $N,L\in\N$.
     So we finish the proof.    
\end{proof}

\vspace{8pt}
By Theorem \ref{thm:lowInfty}, for any $\rho>0$, the approximation rate cannot be better than $\calO(N^{-(2\alpha/d+\rho)}L^{-(2/\alpha+\rho)})$, if we use FNNs in $\NN(\NNinput=d\NNspace\NNmaxwidth\le N\NNspace\NNlayer\le L)$ to approximate functions in $\holder{\lambda}{\alpha}$. By a similar argument, we can show that the approximation rate cannot be $\calO(N^{-2\alpha/d}L^{-(2/\alpha+\rho)})$ nor $\calO(N^{-(2\alpha/d+\rho)}L^{-2\alpha/d})$. Hence, the approximation rate in Theorem \ref{thm:main} is nearly tight. 

\section{Proof of Theorem \ref{thm:mainGap}}
\label{sec:mainTriflingRegion}
In this section, we will prove Theorem \ref{thm:mainGap}. We first present the key ideas in Section \ref{sec:keyIdea}. Based on two propositions in Section \ref{sec:keyIdea}, the detailed proof is presented in Section \ref{sec:proofMainGap}. 
Finally, the proofs of two propositions in Section \ref{sec:keyIdea} can be found in Section \ref{sec:proofProp1} and \ref{sec:proofProp2}.

\subsection{Key ideas of proving Theorem \ref{thm:mainGap}}
\label{sec:keyIdea}
\begin{figure}[!htp]
	\centering
	\includegraphics[width=0.872\textwidth]{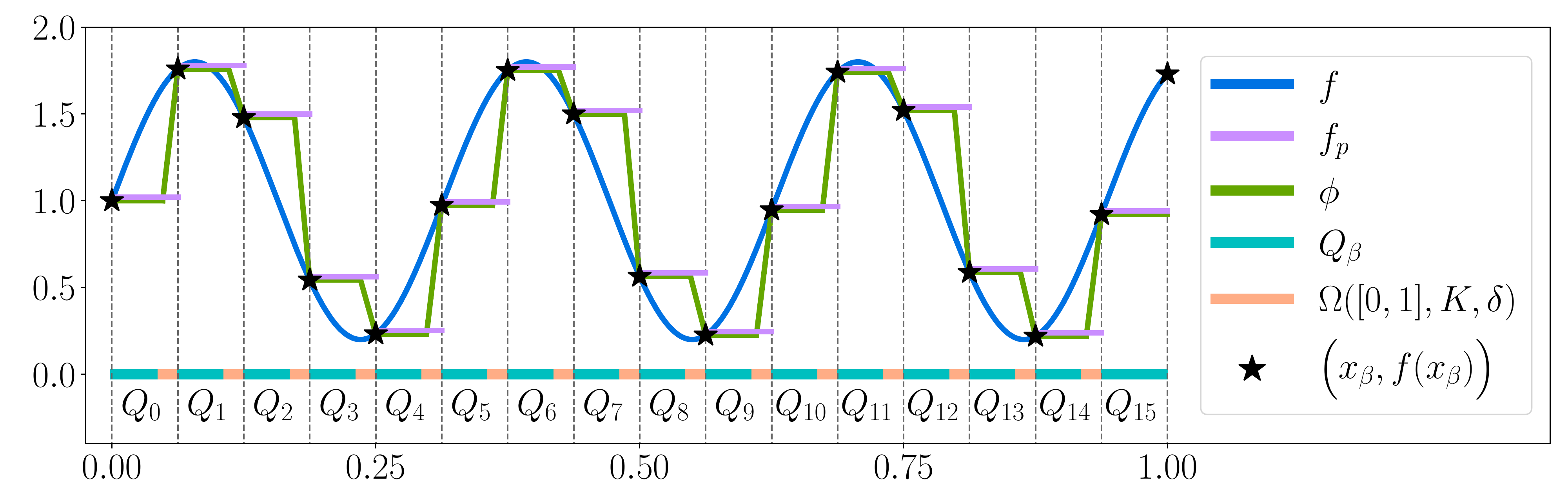}
	\caption{An illustration of $f$, $f_p$, $\phi$, $x_\beta$, $Q_{\beta}$, and the trifling  region $\Omega([0,1]^d,K,\delta)$ in the one-dimensional case for $\beta\in \{0,1,\cdots,K-1\}^d$, where $K=N^2L^2$ and  $d=1$ with $N=2$ and $L=2$. $f$ is the target function; $f_p$ is the piecewise constant function approximating $f$; $\phi$ is a function, implemented by a ReLU FNN,  approximating $f$; and $x_\beta$ is a representative of $Q_\beta$. The measure of the trifling region $\Omega([0,1]^d,K,\delta)$ can be arbitrarily small as we shall see in the proof of Theorem \ref{thm:main}.}
	\label{fig:Q}
\end{figure}

We will show that an almost piecewise constant function $\phi$ implemented by a ReLU FNN is enough to achieve the desired approximation rate in Theorem \ref{thm:main}. Given an arbitrary $f\in C([0,1]^d)$, we introduce a piecewise constant function $f_p\approx f$ serving as an intermediate approximant in our construction in the sense that
\[
f\approx f_p \tn{ on $[0,1]^d$,} \quad \tn{and}\quad f_p\approx \phi \tn{ on $[0,1]^d\backslash\Omega([0,1]^d,K,\delta)$} .
\]
The approximation in $f\approx f_p$ is a simple and standard technique in constructive approximation. For example, given arbitrary $N$ and $L$, uniformly partition $[0,1]^d$ into $\calO(N^2L^2)$ pieces and define $f_p$ using this partition. Then the approximation error of $f_p\approx f$ scales like $\calO(N^{-2/d}L^{-2/d})$.  We will address the approximation in $f_p\approx \phi$ with the same error scaling and a limited budget of the FNN size, e.g., $\calO(NL)$ neurons, based on the fact that $f_p$ can be approximately implemented by a ReLU FNN in $[0,1]^d\backslash\Omega([0,1]^d,K,\delta)$, where $\Omega([0,1]^d,K,\delta)$ is the trifling  region near the discontinuous locations of $f_p$ with an arbitrarily small Lebesgue measure (see Figure \ref{fig:Q} for an illustration). The introduction of the trifling  region is to ease the construction of a deep ReLU FNN to implement the desired $\phi$, which is a piecewise linear and continuous function, to approximate the discontinuous function $f_p$ by removing the difficulty near discontinuous points, essentially smoothing $f_p$ by restricting the approximation domain in $[0,1]^d\backslash\Omega([0,1]^d,K,\delta)$.

Now let us discuss the detailed steps of construction.
First, divide $[0,1]^d$  into a union of important regions $\{Q_\bmbeta\}_\bmbeta$ and the trifling  region $\Omega([0,1]^d,K,\delta)$, where each $Q_\bmbeta$ is associated with a representative $\bm{x}_\bmbeta\in Q_\bmbeta$ such that $f(\bm{x}_\bmbeta)=f_p(\bm{x}_\bmbeta)$ for each index vector $\bmbeta\in \{0,1,\dots,K-1\}^d$, where $K=\calO(N^{2/d}L^{2/d})$ is the partition number per dimension (see Figure \ref{fig:Q+TR} for examples for $d=1$ and $d=2$). 
Next, we design a vector function $\bmPhi_1(\bmx)$ constructed via $\bmPhi_1(\bmx)=\big[\phi_1(x_1),\phi_1(x_2),\cdots,\phi_1(x_d)\big]^T$ to project the whole cube $Q_\bmbeta$ to a $d$-dimensional index $\bmbeta$ for each $\bmbeta$, where each one-dimensional function $\phi_1$  is a step function implemented by a ReLU FNN. The final step is to solve a point fitting problem. To be precise, we construct a function $\phi_2$ implemented by a ReLU FNN to map $\bmbeta$ approximately to $f_p(\bmx_\bmbeta)=f(\bmx_\bmbeta)$. Then $\phi_2\circ\bmPhi_1(\bmx)=\phi_2(\bmbeta)\approx f_p(\bmx_\bmbeta)=f(\bmx_\bmbeta)$ for any $\bmx\in Q_\bmbeta$ and each $\bmbeta$, implying 
$\phi\coloneqq \phi_2\circ\bmPhi_1 \approx f_p\approx f $ on $ [0,1]^d\backslash\Omega([0,1]^d,K,\delta)$. We would like to point out that we only need to care about the values of $\phi_2$ at a set of points $\{0,1,\cdots,K-1\}^d$ in the construction of $\phi_2$ according to our design $\phi=\phi_2\circ\bmPhi_1$ as illustrated in Figure \ref{fig:idea}. Therefore, it is unnecessary to care about the values of $\phi_2$ sampled outside the set $\{0,1,\cdots,K-1\}^d$, which is a key point to ease the design of a ReLU FNN to implement $\phi_2$ as we shall see later. 

\begin{figure}[!htp]        
	\centering
	\includegraphics[width=0.999\textwidth]{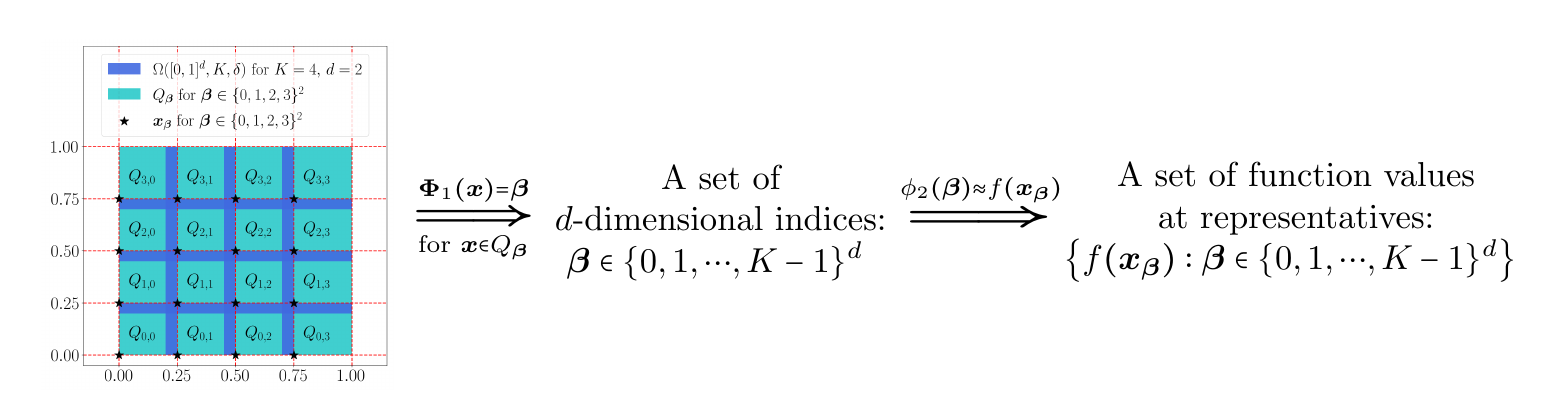}
	\caption{An illustration of the desired function $\phi=\phi_2\circ\bmPhi_1$. Note that $\phi\approx f$ on $[0,1]^d\backslash \Omega([0,1]^d,K,\delta)$, since $\phi(\bmx)=\phi_2\circ\bmPhi_1(\bmx)=\phi_2(\bmbeta)\approx f(\bmx_\bmbeta)$ for any $\bmx\in Q_\bmbeta$ and each $\bmbeta\in \{0,1,\cdots,K-1\}^d$.}
	\label{fig:idea}
\end{figure}

Finally, we discuss how to implement $\bmPhi_1$ and $\phi_2$ by deep ReLU FNNs with width $\calO(N)$ and depth $\calO(L)$ using two propositions as we shall prove in Section \ref{sec:proofProp1} and \ref{sec:proofProp2} later. %The construction of $\bmPhi_1$ is based on Proposition \ref{prop:stepFunc}. 
We first construct a ReLU FNN with desired width and depth  by Proposition \ref{prop:stepFunc} to implement a one-dimensional step function $\phi_1$. Then $\bmPhi_1$ can be attained via defining \[\bmPhi_1(\bmx)=\big[\phi_1(x_1),\phi_1(x_2),\cdots,\phi_1(x_d)\big]^T,\quad \tn{for any $\bmx=[x_1,x_2,\cdots,x_d]^T\in \R^d$.}\]

\begin{proposition}
	\label{prop:stepFunc}
	For any $N,L,d\in \N^+$ and $\delta\in(0, \tfrac{1}{3K}]$ with $K=\lfloor N^{1/d}\rfloor^2 \lfloor L^{2/d}\rfloor$, there exists a one-dimensional function $\phi$ implemented by a ReLU FNN  with width $4\lfloor N^{1/d}\rfloor +3$ and depth $4L+5$ such that
	\begin{equation*}
	\phi(x)=k,\quad \tn{if $x\in [\tfrac{k}{K},\tfrac{k+1}{K}-\delta\cdot 1_{\{k\le K-2\}}]$ for $k=0,1,\cdots,K-1$.}
	\end{equation*} 
\end{proposition}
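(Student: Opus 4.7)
Let $M = \lfloor N^{1/d}\rfloor$ and $J = \lfloor L^{2/d}\rfloor$, so that $K = M^2 J$. My plan is to factor $K$ to match the two available resources: the $M^2$ factor is absorbed by the width $4M+3$ (in parallel), while the $J$ factor is produced by the depth $4L+5$ (sequentially). Writing any target index $k \in \{0,1,\ldots,K-1\}$ in the mixed-radix form $k = j\,M^2 + i_1 M + i_0$ with $j\in\{0,\ldots,J-1\}$ and $i_0,i_1\in\{0,\ldots,M-1\}$, I would construct subnetworks $\phi_{(j)}$, $\phi_{(i_1)}$, $\phi_{(i_0)}$ that separately recover each digit of $k$ from $x$, then combine them by a final affine map $\phi(x) = J M^2 \cdot [\text{digit}] + \ldots$ realized in the output layer.

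\textbf{First}, I would build the base block: a one-dimensional step function $T_M:[0,1]\to\{0,1,\ldots,M-1\}$ that equals $m$ on the $m$-th subinterval except for a transition zone of width $\delta'\le \delta$. This is a sum of $M-1$ steep ramps, each of the shape $\sigma(\delta'^{-1}(x-m/M+\delta')) - \sigma(\delta'^{-1}(x-m/M))$, and can be realized with width $\calO(M)$ in a single hidden layer. Two rescaled-and-shifted copies of $T_M$, the second reading an affine re-centering of $x$ inside the macro-cell of the first, deliver the $M^2$ factor; both copies run in parallel and fit inside the budget $4M+3$ with depth $\calO(1)$.

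\textbf{Second}, I would spend the depth budget to create the remaining $J$ pieces. Since $J = \lfloor L^{2/d}\rfloor$ grows only polynomially in $L$ whereas compositions of a constant-depth, width-$\calO(M)$ "folding" block can double (or $M$-fold) the number of linear pieces per iteration, there is abundant room. Concretely, iterating a simple sawtooth-type block of depth $4$ (the natural source of the factor $4L$ in the depth) $L$ times produces a piecewise-linear function with at least $J$ monotone arcs on each of the $M^2$ fine cells; composing this with one further thresholding stage of the same $T_M$ type discretizes each arc into a flat plateau, yielding $J$ plateaus per cell. Adding the outputs of the three stages with weights $1$, $M$, and $M^2 J$ reconstructs $k = jM^2 + i_1 M + i_0$, and the total depth cost is $4L$ (iteration) plus a constant overhead of five layers for setup, normalization, and aggregation, matching $4L+5$.

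\textbf{The main obstacle} will be propagating the trifling-region slack through the composition. Each of the stages above is only a genuine step function outside a transition zone, and when the second stage reads an affine image of $x$ lying near the boundary of a macro-cell of the first stage, those two zones can in principle overlap and destroy the plateau structure. The hypothesis $\delta \in (0, \tfrac{1}{3K}]$ is calibrated to give exactly the slack needed: the factor $3$ absorbs the three digits being extracted, and the factor $1/K$ absorbs the finest scale at which the step structure must be preserved. The delicate part of the proof is therefore to track the transition widths of $T_M$ at each composition level, show they aggregate to at most $\delta$ in the original $x$-coordinate, and verify that $\phi(x) = k$ holds identically on $[k/K,\, (k+1)/K - \delta]$ for every $k$. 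Once this containment is established, the width tally $4M+3$ and depth tally $4L+5$ follow by adding up the contributions of the parallel and sequential sub-networks.
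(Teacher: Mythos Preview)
Your overall decomposition---absorb the factor $M^2$ with width and the factor $J$ with depth---is reasonable, but the implementation you sketch has a real gap, and the paper proceeds quite differently.

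\textbf{The gap.} The sawtooth/folding stage does not do what you need. Iterating a sawtooth $L$ times produces an \emph{oscillating} piecewise-linear function whose range is still $[0,1]$; each of its ``monotone arcs'' hits the same range, so composing with a single threshold of $T_M$ type yields a \emph{periodic} pattern of plateaus, not the monotone staircase $y\mapsto \lfloor Jy\rfloor$ that you need to read off the digit $j$. To recover a monotone step function with $J$ levels from a sawtooth you would have to extract and reassemble bits (essentially the bit-extraction machinery the paper reserves for Proposition~\ref{prop:pointFitting}), and you have not accounted for that. Relatedly, your claim that the two copies of $T_M$ ``run in parallel'' is inconsistent with your own description: the second copy reads ``an affine re-centering of $x$ inside the macro-cell of the first,'' which requires the first copy's output and is therefore sequential.

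\textbf{How the paper does it.} The paper avoids sawtooths entirely. It relies on two lemmas: Lemma~\ref{lem:widthPower}, which fits any $N_1(N_2+1)+1$ prescribed sample points exactly with a two-hidden-layer ReLU network of width vector $[2N_1,\,2N_2+1]$; and Lemma~\ref{lem:wideToDeep}, which reshapes a network of width vector $[N,\,NL]$ into one of width $2N+2$ and depth $L+1$. For $d\ge 2$ one has $K=\lfloor N^{1/d}\rfloor^2\lfloor L^{2/d}\rfloor$ with $\lfloor L^{2/d}\rfloor\le L$, and a \emph{single} application of these two lemmas already gives the required step function within the width and depth budgets---no composition and no trifling-region bookkeeping are needed. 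Only the case $d=1$ requires a composition, and there the paper factors $K=N^2L^2=(N^2L)\cdot L$ (not $N^2\cdot L^2$ as you do): a first step function $\phi_1$ with $N^2L$ levels (width $4N+2$, depth $2L+1$ after reshaping) gives the coarse index $m$, then $\phi_2$ with $L$ levels (width $6$, depth $2L+1$) reads the fine index $\ell$ from the remainder $x-m/M$. The final network comes out at width $4N+3$ and depth $4L+5$ exactly. The factorization $K=(N^2L)\cdot L$ is chosen precisely so that each factor is achievable by one call to Lemma~\ref{lem:widthPower}+\ref{lem:wideToDeep}; your factorization $N^2\cdot L^2$ forces the depth-side factor to be $L^2$, which a single call cannot deliver with $\calO(1)$ width and depth $\calO(L)$.
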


The construction of $\phi_2$ is a direct result of Proposition \ref{prop:pointFitting} below, the proof of which relies on 
%both the idea illustrated in Figure \ref{fig:wideToDeepIdea} and
the bit extraction technique in   \cite{Bartlett98almostlinear}. 

\begin{proposition}
	\label{prop:pointFitting}
	Given any $\epsilon>0$ and  arbitrary $N,L,J\in \N^+$ with $J\le N^2L^2$, assume   $\{{y}_j\ge 0:j=0,1,\cdots,J-1\}$ is a sample set with $|y_{j}-y_{j-1}|\le \epsilon$ for $j=1,2,\cdots,J-1$. Then there exists $\phi\in \NN(\NNinput=1\NNspace\NNwidth\le 12N+8\NNspace\NNdepth\le 4L+9\NNspace\NNoutput=1)$ such that
	\begin{enumerate}[(i)]
		\item $|\phi(j)-{y}_j|\le \epsilon$ for $j=0,1,\cdots,J-1$;
		\item $0\le \phi(x)\le  \max\{y_{j}:j=0,1,\cdots,J-1\}$ for any $x\in\R$.
	\end{enumerate}
\end{proposition}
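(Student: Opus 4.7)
The plan is to reduce the point-fitting problem to the bit-extraction technique of \cite{Bartlett98almostlinear}, combined with the step-function construction of Proposition \ref{prop:stepFunc}. First I would quantize: set $z_j := \lfloor y_j/\epsilon \rfloor - m_0$ with $m_0 := \min_k \lfloor y_k/\epsilon\rfloor$. The hypothesis $|y_j - y_{j-1}| \le \epsilon$ gives $|z_j - z_{j-1}| \le 2$, and a telescoping argument bounds $z_j \in \{0, 1, \ldots, 2J\}$, so each $z_j$ occupies at most $K := \lceil \log_2(2J+1)\rceil = \mathcal{O}(\log(NL))$ bits. Constructing a ReLU network $\tildephi$ within the prescribed budget and satisfying $\tildephi(j) = z_j$ for $j=0,\ldots,J-1$ then suffices, since $\phi(x) := \epsilon\,\tildephi(x) + \epsilon m_0$ followed by a $2$-neuron terminal clip $t \mapsto \sigma(t) - \sigma(t-\max_j y_j)$ delivers both (i) and (ii).

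Since $J \le N^2 L^2$, I would factor $J \le J_1 J_2$ with $J_1 \le N^2$ and $J_2 \le L^2$, and write each input $j \in \{0,\ldots,J-1\}$ as $j = j_1 J_2 + j_2$; the extraction of $(j_1, j_2)$ from $j$ is carried out by step-function preprocessing as in Proposition \ref{prop:stepFunc}, at constant width and depth $\mathcal{O}(L)$. For each $j_1$ I would precompute a codeword $\theta_{j_1} \in [0,1)$ whose base-$2^K$ expansion concatenates the $K$-bit encodings of $z_{j_1 J_2}, z_{j_1 J_2+1}, \ldots, z_{j_1 J_2+J_2-1}$. The codebook map $j_1 \mapsto \theta_{j_1}$ is itself a one-dimensional interpolation at $J_1 \le N^2$ points, which I would implement by a width-$\mathcal{O}(N)$, depth-$\mathcal{O}(L)$ ReLU block built from shifted sawtooth/step functions. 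In parallel, a bit-extraction cascade of depth $\mathcal{O}(L)$ and width $\mathcal{O}(1)$ applied to $\theta_{j_1}$ iterates the map $\tau \mapsto 2^K \tau - \lfloor 2^K \tau \rfloor$, peeling off one $K$-bit block per layer; a counter channel driven by $j_2$ gates the emission of exactly the $j_2$-th block, which is returned as $z_j$.

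The main obstacle is not existence but squeezing everything into the sharp constants $12N+8$ and $4L+9$ rather than merely $\mathcal{O}(N)$ and $\mathcal{O}(L)$. One must share neurons between the codebook subnetwork, the $(j_1,j_2)$ splitter, and the bit-extraction channel, and verify that the integer arithmetic inside the cascade recovers $z_j$ \emph{exactly}, since any accumulated rounding error would destroy the bound $|\phi(j)-y_j| \le \epsilon$ in (i). A careful implementation of $\lfloor 2^K \cdot \rfloor$ in ReLU also requires attention because the alphabet size $2^K$ depends on $\log(NL)$, so the constant inflation of width by $K$ must be tracked explicitly against the prefactor $12$. The remaining pieces, namely the terminal clip enforcing (ii) and the step-function preprocessing, are then routine.
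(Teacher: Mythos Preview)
Your plan has a genuine gap in the depth accounting that stems from encoding the quantized \emph{values} $z_j$ directly. After telescoping you get $z_j\in\{0,\ldots,2J\}$, so each $z_j$ carries $K=\Theta(\log(NL))$ bits. With your factorisation $J_2\le L^2$, the codeword $\theta_{j_1}$ packs $J_2$ blocks of $K$ bits, and the cascade that ``peels off one $K$-bit block per layer'' necessarily runs for $J_2$ iterations, i.e.\ depth $\Theta(L^2)$, not $\mathcal O(L)$. Even if you switch to the factorisation $J_1=N^2L$, $J_2=L$, each peel still has to compute $\lfloor 2^K\tau\rfloor$ on an alphabet of size $2^K=\Theta(NL)$; doing this exactly with ReLU either costs width $\Theta(NL)$ or, if you extract bit by bit, an extra depth factor $K=\Theta(\log(NL))$ per block, giving total depth $\Theta(L\log(NL))$. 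Either way the budget $4L+9$ is missed, and this is not a constant-chasing issue but a structural one.

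The missing idea, which the paper exploits, is to encode not the values but the \emph{differences}. From $|y_j-y_{j-1}|\le\epsilon$ one gets $a_{m,\ell}-a_{m,\ell-1}\in\{-1,0,1\}$ for $a_{m,\ell}=\lfloor y_{mL+\ell}/\epsilon\rfloor$, which splits into two single bits $c_{m,\ell},d_{m,\ell}\in\{0,1\}$. Then
\[
a_{m,\ell}=a_{m,0}+\sum_{j=0}^{\ell}c_{m,j}-\sum_{j=0}^{\ell}d_{m,j},
\]
so one needs only to extract \emph{partial sums of single bits}, not $K$-bit blocks. With the factorisation $j=mL+\ell$, $m<N^2L$, $\ell<L$, each codeword holds exactly $L$ one-bit entries, and the bit-extraction lemma returns $\sum_{j\le\ell}\theta_j$ in depth $\mathcal O(L)$ and width $\mathcal O(1)$. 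The baseline $a_{m,0}$ is handled by a separate width-$\mathcal O(N)$, depth-$\mathcal O(L)$ interpolation over the $N^2L$ outer indices. This is what makes the constants $12N+8$ and $4L+9$ attainable; your direct-value encoding cannot be repaired to fit them.
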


With the above propositions ready, let us prove Theorem \ref{thm:mainGap} in Section \ref{sec:proofMainGap}. We further assume that $\omega_f(r)>0$ for any $r>0$, excluding a simple case when $f$ is a constant function.

%%%%%%%%%%%%%%%%%%%%%%%%%%%%%%%%%%%%%%%%%%%
%%%%%%%%%%%%%%%%%%%%%%%%%%%%%%%%%%%%%%%%
\subsection{Proof of Theorem \ref{thm:mainGap}}
\label{sec:proofMainGap}

We essentially construct an almost piecewise constant function implemented by a ReLU FNN with $\calO(NL)$ neurons to approximate $f$. \red{We may $f$ is not a constant since it is a trivial case. Then $\omega_f(r)>0$ for any $r>0$.}
It is clear that $|f(\bm{x})-f(\bmzero)|\le \omega_f(\sqrt{d})$ for any ${\bm{x}}\in [0,1]^d$. Define $\tildef=f-f(\bmzero)+\omega_f(\sqrt{d})$, then $0\le \tildef (\bm{x}) \le 2\omega_f(\sqrt{d})$ for any ${\bm{x}}\in [0,1]^d$. Let $M=N^2L$,  $K=\lfloor N^{1/d}\rfloor^2 \lfloor L^{2/d}\rfloor$, and $\delta$ be an arbitrary number in $(0,\tfrac{1}{3K}]$.

The proof can be divided into four steps as follows:
\begin{enumerate}
	\item Divide $[0,1]^d$ into a union of sub-cubes $\{Q_{\bm{\beta}}\}_{\bm{\beta}\in \{0,1,\cdots,K-1\}^d}$ and the trifling region $\Omega([0,1]^d,K,\delta)$, and denote $\bmx_\bmbeta$ as the vertex of $Q_\bmbeta$ with minimum $\|\cdot\|_1$ norm;
	
	\item Construct a sub-network to implement a vector function $\bmPhi_1$ projecting the whole cube $ Q_\bmbeta$ to the $d$-dimensional index $\bmbeta$ for each $\bmbeta$, i.e., $\bmPhi_1(\bmx)=\bmbeta$ for all $\bmx\in Q_\bmbeta$;
	
	\item Construct a sub-network to implement a  function $\phi_2$  mapping the index $\bmbeta$ approximately to $\tildef(\bmx_\bmbeta)$. This core step can be further divided into three sub-steps:	
	\begin{enumerate}
		\item[3.1.] Construct a sub-network to implement $\psi_1$ bijectively mapping the index set $\{0,1,\cdots,K-1\}^d$ to an auxiliary set $\mathcal{A}_1\subseteq \big\{\tfrac{j}{2K^d}:j=0,1,\cdots,2K^d\big\}$ defined later (see Figure \ref{fig:g+A12} for an illustration);
		
		\item[3.2.] Determine a continuous piecewise linear function $g$ with  a set of breakpoints $\calA_1\cup\calA_2\cup\{1\}$  satisfying: 1) assign the values of $g$ at breakpoints in $\calA_1$ based on $\{\tildef(\bmx_\bmbeta)\}_\bmbeta$, i.e., $g\circ \psi_1(\bmbeta)=\tildef(\bmx_\bmbeta)$; 2) assign the values of $g$ at breakpoints in $\calA_2\cup\{1\}$ to reduce the variation of $g$  for applying Proposition \ref{prop:pointFitting};
		%		such that $\tildef$ and $g\circ \phi_2 \circ \bmPhi_1$ have the same value at the elements of $\{\tfrac{k}{K}:k=0,1,\cdots,K-1\}^d$, i.e. $\tildef\approx g\circ \phi_2\circ \bmPhi_1$;
		
		\item[3.3.] Apply Proposition \ref{prop:pointFitting} to construct a sub-network to implement a function $\psi_2$ approximating $g$ well on $\calA_1\cup\calA_2\cup\{1\}$. Then the desired function $\phi_2$ is given by  $\phi_2=\psi_2\circ\psi_1$ satisfying $\phi_2(\bmbeta)=\psi_2\circ\psi_1(\bmbeta)\approx g\circ\psi_1(\bmbeta)=\tildef(\bmx_\bmbeta)$;
		%		based on Proposition \ref{prop:pointFitting} to implement a function  $\psi_3$ mapping $\mathcal{A}_1$ approximately to $\big\{\tildef(\bmx):\bmx\in \{\tfrac{k}{K}:k=0,1,\cdots,K-1\}^d\big\}$ such that $g\approx \phi_3$ on $\mathcal{A}_1$;
	\end{enumerate}
	
	\item Construct the final target network to implement the desired function $\phi$ such that $\phi(\bmx)= \phi_2 \circ \bmPhi_1(\bmx) +f(\bmzero)-\omega_f(\sqrt{d})\approx \tildef(\bmx_\bmbeta) +f(\bmzero)-\omega_f(\sqrt{d}) = f(\bmx_\bmbeta)$ for $\bmx\in Q_\bmbeta$.
\end{enumerate}    

The details of these steps can be found below.
\mystep{1}{Divide $[0,1]^d$ into  $\{Q_{\bm{\beta}}\}_{\bm{\beta}\in \{0,1,\cdots,K-1\}^d}$ and  $\Omega([0,1]^d,K,\delta)$.}

Define  $\bmx_\bmbeta \coloneqq \bmbeta/K$ and 
\[
Q_{\bm{\beta}}\coloneqq\Big\{{\bm{x}}= [x_1,\cdots,x_d]^T\in [0,1]^d:x_i\in[\tfrac{\beta_i}{K},\tfrac{\beta_i+1}{K}-\delta\cdot 1_{\{\beta_i\le K-2\}}], \ i=1,\cdots,d\Big\}
\]
for each $d$-dimensional index  ${\bm{\beta}}= [\beta_1,\cdots,\beta_d]^T\in \{0,1,\cdots,K-1\}^d$. Recall that $\Omega([0,1]^d,K,\delta)$ is the trifling region defined in Equation \eqref{eq:triflingRegionDef}. Apparently, $\bmx_\bmbeta$ is the vertex of $Q_\bmbeta$ with minimum $\|\cdot\|_1$ norm and 
\[[0,1]^d= \big(\cup_{\bm{\beta}\in \{0,1,\cdots,K-1\}^d}Q_{\bm{\beta}}\big)\cup \Omega([0,1]^d,K,\delta),\]
see Figure \ref{fig:Q+TR} for illustrations.

\begin{figure}[!htp]
	\centering
	\begin{minipage}{0.905\textwidth}
		\centering
		\begin{subfigure}[b]{0.522\textwidth}
			\centering
			\includegraphics[width=0.999\textwidth]{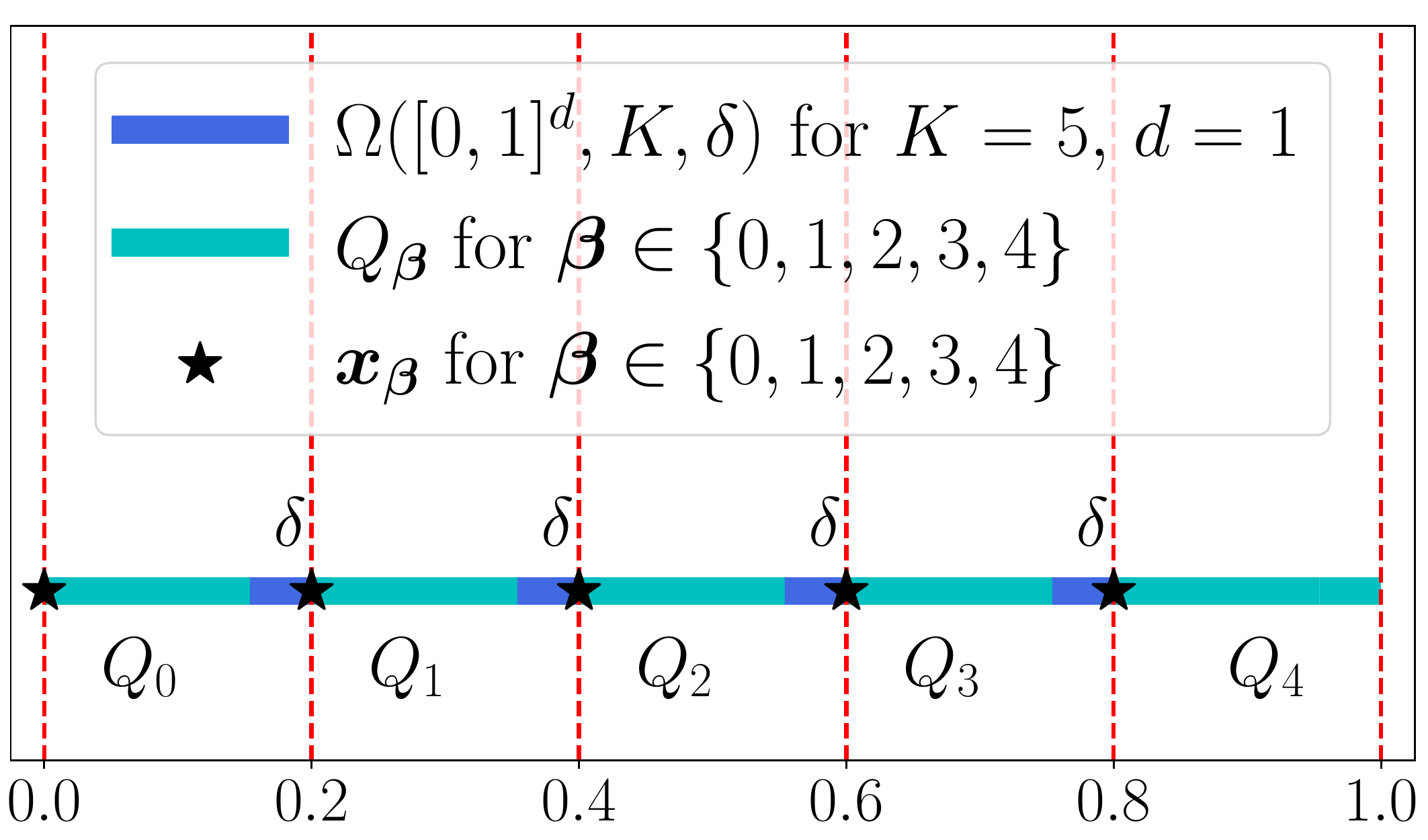}
			\subcaption{}
		\end{subfigure}
			\begin{minipage}{0.021\textwidth}
				\
			\end{minipage}
		\begin{subfigure}[b]{0.303\textwidth}
			\centering
			\includegraphics[width=0.999\textwidth]{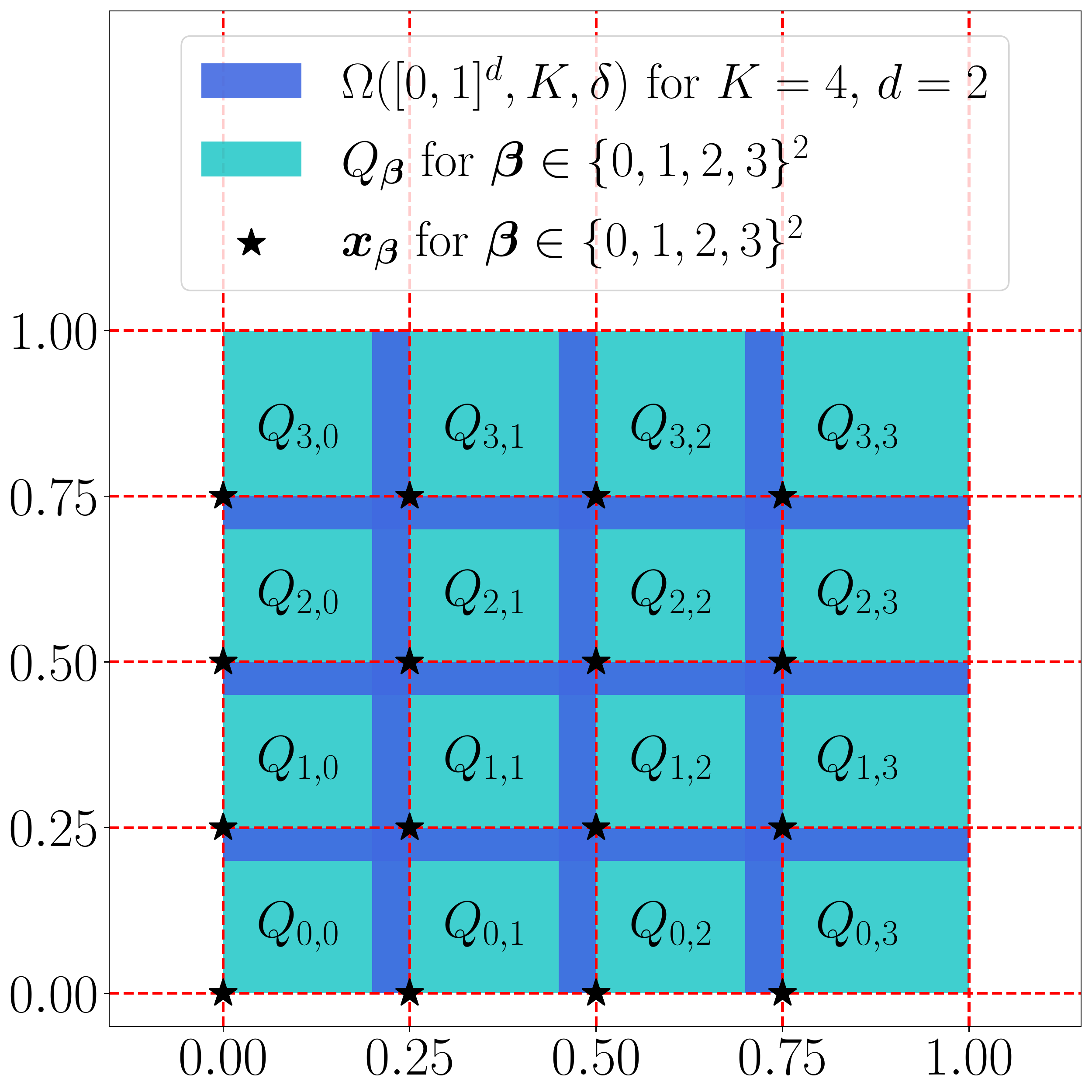}
			\subcaption{}
		\end{subfigure}
	\end{minipage}
	\caption{Illustrations of  $\Omega([0,1]^d,K,\delta)$,  $Q_\bmbeta$, and $\bmx_\bmbeta$ for $\bmbeta\in \{0,1,\cdots,K-1\}^d$. (a) $K=5$ and $d=1$. (b) $K=4$ and $d=2$. }
	\label{fig:Q+TR}
\end{figure}

\mystep{2}{Construct $\bmPhi_1$ mapping $\bmx\in Q_\bmbeta$ to  $\bmbeta$.}

By Proposition \ref{prop:stepFunc}, there exists $\phi_1\in \NNF(
\NNwidth \le 4\lfloor N^{1/d}\rfloor +3\NNspace\NNdepth\le 4L+5)$ such that
\begin{equation*}
\phi_1(x)=\red{k},\quad \tn{if $x\in [\tfrac{k}{K},\tfrac{k+1}{K}-\delta\cdot 1_{\{k\le K-2\}}]$ for $k=0,1,\cdots,K-1$.}
\end{equation*}    
It follows that $\phi_1(x_i)=\beta_i$ if $\bmx=[x_1,x_2,\cdots,x_d]^T\in Q_\bmbeta$ for each $\bmbeta=[\beta_1,\beta_2,\cdots,\beta_d]^T$.

By defining
\begin{equation*}
\bmPhi_1(\bmx)\coloneqq \big[\phi_1(x_1),\phi_1(x_2),\cdots,\phi_1(x_d)\big]^T,\quad \tn{for any } \bmx=[x_1,x_2,\cdots,x_d]^T\in \R^d,
\end{equation*}
we have $\bmPhi_1(\bmx)=\bmbeta$ if $\bmx\in Q_\bmbeta$ for $\bmbeta\in \{0,1,\cdots,K-1\}^d$.

\mystep{3}{Construct $\phi_2$ mapping $\bmbeta$ approximately to $\tildef(\bmx_\bmbeta)$.}

The construction of the sub-network implementing $\phi_2$ is essentially based on Proposition \ref{prop:pointFitting}. 
To meet the requirements of applying Proposition \ref{prop:pointFitting}, we first define two auxiliary set $\calA_1$ and $\calA_2$ as 
\begin{equation*}
\calA_1\coloneqq \big\{\tfrac{i}{K^{d-1}}+\tfrac{k}{2K^d}:i=0,1,\cdots,K^{d-1}\red{-1}\tn{\quad and \quad} k=0,1,\cdots,K-1\big\}
\end{equation*}
and 
\begin{equation*}
\calA_2\coloneqq \big\{\tfrac{i}{K^{d-1}}+\red{\tfrac{K+k}{2K^d}}:i=0,1,\cdots,K^{d-1}\red{-1}\tn{\quad and \quad}k=0,1,\cdots,K-1\big\}.
\end{equation*}
Clearly, $\calA_1\cup\calA_2\cup\{1\}=\{\tfrac{j}{2K^d}:j=0,1,\cdots,2K^d\}$ and $\calA_1\cap\calA_2=\emptyset$. See Figure \ref{fig:Q+TR} for an illustration of $\calA_1$ and $\calA_2$. Next, we further divide this step into three sub-steps.

\mystep{3.1}{Construct $\psi_1$ bijectively mapping  $\{0,1,\cdots,K-1\}^d$ to $\mathcal{A}_1$.}

Inspired by the binary representation, we define
\begin{equation}
\psi_1(\bmx)\coloneqq \tfrac{x_d}{2K^d}+\sum_{i=1}^{d-1}\tfrac{x_i}{K^i},\quad \tn{for any $\bmx=[x_1,x_2,\cdots,x_d]^T\in \R^d$.}
\end{equation}
Then $\psi_1$ is a linear function bijectively mapping the index set $\{0,1,\cdots,K-1\}^d$ to
\begin{equation*}
\begin{split}
&\quad \Big\{\tfrac{\beta_d}{2K^d}+\sum_{i=1}^{d-1}\tfrac{\beta_i}{K^i}:\bm{\beta}\in \{0,1,\cdots,K-1\}^d\Big\}\\
&=\big\{\tfrac{i}{K^{d-1}}+\tfrac{k}{2K^d}:i=0,1,\cdots,K^{d-1}\red{-1}\tn{\quad and\quad } k=0,1,\cdots,K-1\big\}=\calA_1.
\end{split}
\end{equation*}

\mystep{3.2}{Construct $g$ to satisfy $g\circ\psi_1(\bmbeta)=\tildef(\bmx_\bmbeta)$ and to meet the requirements of applying Proposition \ref{prop:pointFitting}.}

Let $g:[0,1]\to \R$ be a continuous piecewise linear function with a set of breakpoints $\left\{\tfrac{j}{2K^d}: j=0,1,\cdots,2K^d\right\}=\calA_1\cup\calA_2\cup\{1\}$ and the values of $g$ at these breakpoints satisfy the following properties:

\begin{itemize}
	\item The values of $g$ at the breakpoints in $\mathcal{A}_1$ are set as
	\begin{equation}
	\label{eq:ftog}
	g(\psi_1(\bmbeta))=\tildef(\bmx_\bmbeta),\quad \tn{for any  $\bm{\bm{\beta}}\in \{0,1,\cdots,K-1\}^d$;}
	\end{equation} 
	
	\item At the breakpoint $1$, let $g(1)=\tildef(\bm{1})$, where $\bm{1}=[1,1,\cdots,1]^T\in \R^d$;
	
	\item 
	The values of $g$ at the breakpoints in $\mathcal{A}_2$ are assigned to reduce the variation of $g$, which is a requirement of applying Proposition \ref{prop:pointFitting}. Note that 
	\begin{equation*}
	\{\tfrac{i}{K^{d-1}}-\tfrac{K+1}{2K^d},\ \tfrac{i}{K^{d-1}}\}\subseteq\calA_1\cup\{1\},\quad \tn{for $i=1,2,\cdots,K^{d-1}$,}
	\end{equation*}
	implying the values of $g$ at $\tfrac{i}{K^{d-1}}-\tfrac{K+1}{2K^d}$ and $\tfrac{i}{K^{d-1}}$ have been assigned for $i=1,2,\cdots,K^{d-1}$. Thus, the values of $g$ at the breakpoints in $\calA_2$ can be successfully assigned by letting $g$ linear on each interval 
	$[\tfrac{i}{K^{d-1}}-\tfrac{K+1}{2K^d},\, \tfrac{i}{K^{d-1}}]$ for $i=1,2,\cdots,K^{d-1}$, since $\calA_2\subseteq \cup_{i=1}^{K^{d-1}}[\tfrac{i}{K^{d-1}}-\tfrac{K+1}{2K^d},\, \tfrac{i}{K^{d-1}}]$.
\end{itemize}
\begin{figure}
	\centering
	\includegraphics[width=0.8\textwidth]{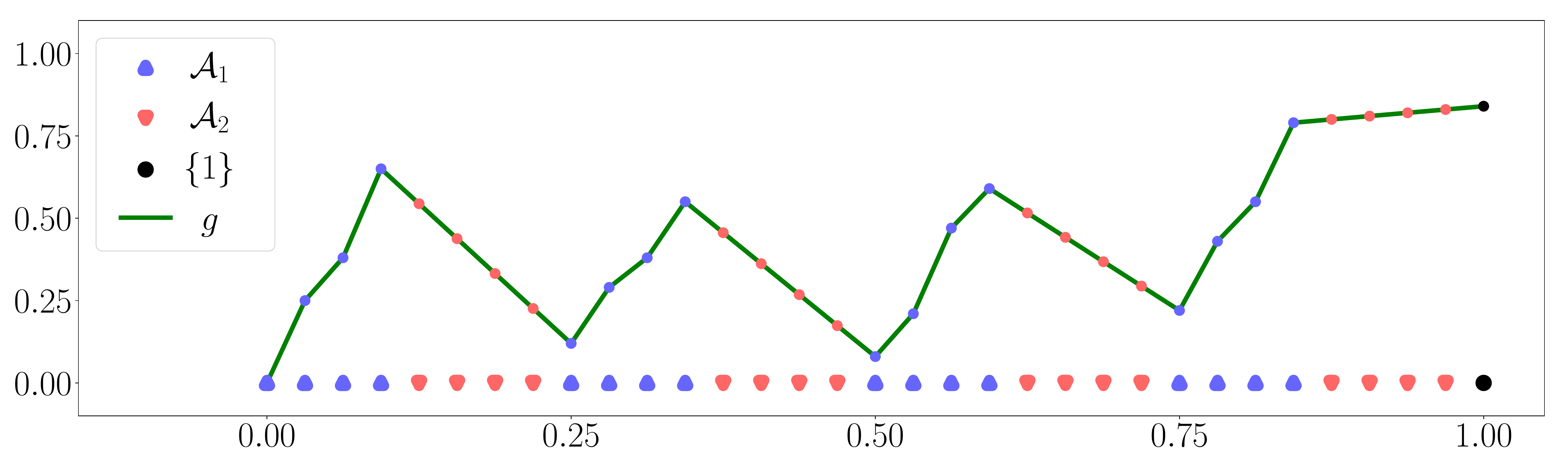}
	\caption[An illustration of $\mathcal{A}_1$, $\mathcal{A}_2$, $\{1\}$, and $g$ for $d=2$ and $K=4$]{An illustration of $\mathcal{A}_1$, $\mathcal{A}_2$, $\{1\}$, and $g$ for $d=2$ and $K=4$.}
	\label{fig:g+A12}
\end{figure}

Apparently, such a function $g$ exists (see Figure \ref{fig:g+A12} for an example) and satisfies
\begin{equation*}
\label{eq:gErrorEstimation}
\left|g(\tfrac{j}{2K^d})-g(\tfrac{j-1}{2K^d})\right|\le \max\big\{\omega_f(\tfrac{1}{K}),\omega_f({\sqrt{d}})/K\big\}\le  \omega_f(\tfrac{\sqrt{d}}{K}),\quad \tn{for } j=1,2,\cdots,2K^d,
\end{equation*} 
and
\begin{equation*}
0\le g(\tfrac{j}{2K^d})\le 2\omega_f(\sqrt{d}), \quad \tn{for}\ j=0,1,\cdots,2K^d.
\end{equation*}

\mystep{3.3}{Construct $\psi_2$ approximating $g$ well on $\calA_1\cup\calA_2\cup\{1\}$.}

Since $2K^d=2 \big(\lfloor N^{1/d}\rfloor^2 \lfloor L^{2/d}\rfloor\big)^d\le 2\big(N^2L^2\big)\le 
N^2\widetilde{L}^2$, where $\widetilde{L}=2L$, by Proposition \ref{prop:pointFitting} \red{(set $y_j=g(\tfrac{j}{2K^2})$ and $\varepsilon=\omega_f(\tfrac{\sqrt{d}}{K})>0$ therein)}, there exists $\tildepsi_2\in \NNF(\NNinput=1\NNspace\NNwidth\le 12 N+8\NNspace\NNdepth\le 4\widetilde{L}+9)=\NNF(\NNinput=1\NNspace\NNwidth\le 12N+8\NNspace\NNdepth\le 8L+9)$ such that
\begin{equation*}
%\label{eq:phi1Minusg}
|\tildepsi_2(j)-g(\tfrac{j}{2K^d})|\le \omega_f(\tfrac{\sqrt{d}}{K}),\quad  \tn{for } j=0,1,\cdots,2K^d-1,
\end{equation*}
and 
\begin{equation*}
%\label{eq:phi3tUB}
\begin{split}
0\le \tildepsi_2(x) \le  \max\{g(\tfrac{j}{2K^d}):j=0,1,\cdots,2K^d-1\}\le 2\omega_f(\sqrt{d}), \quad \tn{for any $x\in\R$.}
\end{split}
\end{equation*}

By defining $\psi_2(x)\coloneqq \tildepsi_2(2K^dx)$ for any $x\in \R$, we have $\psi_2\in \NNF(\NNinput=1\NNspace\NNwidth\le 12N+8\NNspace\NNdepth\le 8L+9)$,
\begin{equation}
\label{eq:phi3tUB}
\begin{split}
0\le \psi_2(x)=\tildepsi_2(2K^dx) \le 2\omega_f(\sqrt{d}), \quad \tn{for any } x\in\R,
\end{split}
\end{equation}
and 
\begin{equation}
\label{eq:phi1Minusg}
|\psi_2(\tfrac{j}{2K^d})-g(\tfrac{j}{2K^d})|=|\tildepsi_2(j)-g(\tfrac{j}{2K^d})|\le \omega_f(\tfrac{\sqrt{d}}{K}), \quad \tn{for $ j=0,1,\cdots,2K^d-1.$}
\end{equation}

%\mystep{3.4}{Construct $\phi_2$ mapping $\bmbeta$ approximately to $\tildef(\bmx_\bmbeta)$.}
Let us end Step $3$ by defining the desired function $\phi_2$ as 
$\phi_2\coloneqq \psi_2\circ \psi_1$. Note that $\psi_1:\R^d\to\R $ is a linear function and $\psi_2\in \NNF(\NNinput=1\NNspace\NNwidth\le 12N+8\NNspace\NNdepth\le 8L+9)$. Thus, $\phi_2\in \NNF(\NNinput=d\NNspace\NNwidth\le 12N+8\NNspace\NNdepth\le 8L+9)$.
By Equation \eqref{eq:ftog} and \eqref{eq:phi1Minusg}, we have
\begin{equation}
\label{eq:phi2-tildef}
\begin{split}
|\phi_2(\bmbeta)-\tildef(\bmx_\bmbeta)|%&=\left|\psi_2\big(\sigma(\psi_1(\bmbeta))\big)-g(\psi_1(\bmbeta))\right|\\
=\left|\psi_2(\psi_1(\bmbeta))-g(\psi_1(\bmbeta))\right|\le \omega_f(\tfrac{\sqrt{d}}{K}),
\end{split}
\end{equation}
for any $\bmbeta\in\{0,1,\cdots,K-1\}^d$.
Equation \eqref{eq:phi3tUB} and $\phi_2= \psi_2\circ \psi_1$ implies 
\begin{equation}
\label{eq:phi2tUB}
\begin{split}
0\le \phi_2(\bmx)\le 2\omega_f(\sqrt{d}), \quad \tn{for any } \bmx\in\R^d.
\end{split}
\end{equation}

\mystep{4}{Construct the final  network to implement the desired function $\phi$.}

Define $\phi\coloneqq \phi_2\circ\bmPhi_1+f(\bmzero)-\omega_f(\sqrt{d})$. 
Since $\phi_1 \in\NNF(\NNwidth\le 4\lfloor N^{1/d}\rfloor+3\NNspace\NNdepth\le 4L+5])$, we have $\bmPhi_1\in \NNF(\NNinput=d\NNspace\NNwidth\le4d\lfloor N^{1/d}\rfloor+3d\NNspace\NNdepth\le4L+5\NNspace\NNoutput=d)$. Note that $\phi_2\in \NNF(\NNinput=d\NNspace\NNwidth\le 12N+8\NNspace\NNdepth\le 8L+9)$. Thus, $\phi=\phi_2 \circ \bmPhi_1+f(\bmzero)-\omega_f(\sqrt{d})$ is in 
\begin{equation*}
\begin{split}
\NN\big(\NNwidth\le\max\{4d\lfloor N^{1/d}\rfloor+3d, 12N+8\}\NNspace\NNdepth\le (4L+5)+(8L+9)= 12L+14\big).
\end{split}
\end{equation*}

Now let us estimate the approximation error.
Note that $f=\tildef+f(\bmzero)-\omega_f(\sqrt{d})$. By Equation \eqref{eq:phi2-tildef}, for any $\bmx\in Q_\bmbeta$ and $\bmbeta\in \{0,1,\cdots,K-1\}^d$, we have
\begin{equation*}
\begin{split}
|f(\bmx)-\phi(\bmx)|&=|\tildef(\bmx)-\phi_2(\bmPhi_1(\bmx))|=|\tildef(\bmx)-\phi_2(\bmbeta)|\\
&\le |\tildef(\bmx)-\tildef(\bmx_\bmbeta)|+|\tildef(\bmx_\bmbeta)-\phi_2(\bmbeta)|\\
&\le \omega_f(\tfrac{\sqrt{d}}{K})+\omega_f(\tfrac{\sqrt{d}}{K})\le 2\omega_f(8\sqrt{d}N^{-2/d}L^{-2/d}),
\end{split}
\end{equation*}
where the last inequality comes from the fact $K=\lfloor N^{1/d}\rfloor^2\lfloor L^{2/d}\rfloor\ge \tfrac{N^{2/d}L^{2/d}}{8}$ for any $N,L\in \N^+$. Recall the fact $\omega_f(nr)\le n\omega_f(r)$ for any $n\in\N^+$ and $r\in [0,\infty)$. Therefore, for any $\bmx\in \cup_{\bm{\beta}\in \{0,1,\cdots,K-1\}^d} Q_\bmbeta\red{=} [0,1]^d\backslash \Omega([0,1]^d,K,\delta)$, we have
\begin{equation*}
\begin{split}
|f(\bmx)-\phi(\bmx)|\le 2\omega_f(8\sqrt{d}N^{-2/d}L^{-2/d})&\le 2\lceil 8\sqrt{d}\rceil\omega_f(N^{-2/d}L^{-2/d})\\
&\le 18\sqrt{d}\,\omega_f(N^{-2/d}L^{-2/d}).
\end{split}
\end{equation*}

It remains to show the upper bound of $\phi$. By Equation \eqref{eq:phi2tUB} and  $\phi= \phi_2\circ\bmPhi_1+f(\bmzero)-\omega_f(\sqrt{d})$, it holds that
$ \|\phi\|_{L^\infty(\R^d)}\le |f(\bmzero)|+ \omega_f(\sqrt{d})$. 
Thus, we finish the proof.

%%%%%%%%%%%%%%%%%%%%%%%%%%%%%%%%%%%%%%%%%%%
\subsection{Proof of Proposition \ref{prop:stepFunc}}
\label{sec:proofProp1}

\begin{lemma}
	\label{lem:widthPower}
	For any $N_1,N_2\in \N^+$, given $N_1(N_2+1)+1$ samples $(x_i,y_i)\in \R^2$ with $x_0<x_1<\cdots<x_{N_1(N_2+1)}$ and  $y_i\ge 0$ for $i=0,1,\cdots,N_1(N_2+1)$,
	there exists $\phi\in \NN(\NNinput=1\NNspace\NNwidthvec=[2N_1,2N_2+1]\NNspace\NNoutput=1)$ satisfying the following conditions.
	\begin{enumerate}[(i)]
		\item $\phi(x_i)=y_i$ for $i=0,1,\cdots,N_1(N_2+1)$;
		\item $\phi$ is linear on each interval $[x_{i-1},x_{i}]$ for $i\notin \{(N_2+1)j:j=1,2,\cdots,N_1\} $.
	\end{enumerate}
\end{lemma}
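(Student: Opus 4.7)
The plan is to partition the $N_1(N_2+1)+1$ samples into $N_1$ consecutive blocks and build $\phi$ in two stages aligned with the two hidden layers: the first layer (width $2N_1$) produces per-block features that encode both ``which block $x$ lies in'' and ``where within it,'' while the second layer (width $2N_2+1$) performs a single universal piecewise-linear interpolation that, after composition, reproduces the correct within-block interpolant on each block. Concretely, set $m = N_2+1$ and let block $B_j$ consist of the samples with indices $(j-1)m,(j-1)m+1,\ldots,jm$ for $j=1,\ldots,N_1$, so that consecutive blocks share an endpoint and condition (ii) says exactly that $\phi$ must be linear on every sub-interval strictly inside a block (the excluded indices $i=jm$ being the ``transition'' intervals between blocks).

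In Step~1 I would use each pair of the $2N_1$ first-layer ReLU units to build a block-localised ramp of the form $\sigma(a_j x + b_j)-\sigma(a_j x + b_j')$ whose slope is nonzero exactly on $I_j=[x_{(j-1)m},x_{jm}]$ and whose restriction to $I_j$ is an affine parametrisation of $I_j$ onto a common reference interval $[0,1]$. The first-layer output $\bm{u}(x)\in\mathbb{R}^{2N_1}$ then collects these $N_1$ bumps; crucially, $\bm{u}(x)$ is affine on each sub-interval of each $I_j$ and vanishes outside.

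In Step~2 I would use the $2N_2+1$ units of the second layer, composed with an affine output, to realise on $[0,1]$ a single CPWL function $\Psi$ with the $N_2$ interior breakpoints needed to interpolate, after per-block rescaling, the unique block-level CPWL $f_j$ through the $m+1$ samples of $B_j$. The standard fact that any CPWL function on $[0,1]$ with $k$ breakpoints is representable by a width-$(k+2)$ one-hidden-layer ReLU network gives $\Psi$ with $2N_2+1$ units to spare (the ``$+1$'' absorbing a constant channel so that different blocks can receive different affine shifts). Summing the gated contributions of each block through the linear output layer yields $\phi$. Conditions (i) and (ii) can then be verified directly: at every sample $x_i$ only the bump of the owning block is nontrivial and $\Psi$ has been calibrated to output $y_i$; inside a block, away from the final sub-interval, $\phi$ coincides with one linear piece of $f_j$, while any breakpoint introduced by the overlap with the next block's bump is confined to the free interval $[x_{jm-1},x_{jm}]$.

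The main obstacle will be Step~2: showing that a \emph{single} width-$(2N_2+1)$ CPWL map can be reused across all $N_1$ blocks simultaneously, since a naive per-block construction would cost $\Theta(N_1 N_2)$ units. This forces the first-layer encoding to be engineered so that the block identity and the within-block coordinate can be decoded jointly by the shallow second layer; the hypothesis $y_i\ge 0$ is used here to keep the intermediate activations non-negative so that the ReLUs of the second layer act as the identity on the signals we need to preserve. Everything else---continuity across block boundaries, exact interpolation at shared endpoints, and the restriction of extra breakpoints to the allowed ``transition'' intervals---follows by bookkeeping once the block decomposition is fixed.
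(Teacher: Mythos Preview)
The paper does not prove this lemma here; it simply cites Lemma~2.2 of \cite{2019arXiv190210170S} and restates that statement. Your block decomposition and the division of labour between the two hidden layers (first layer carries the $N_1$ blocks, second layer supplies the $N_2$ interior breakpoints) match that construction, and you have correctly located the central difficulty and the relevance of the hypothesis $y_i\ge 0$.

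The gap is that Step~2, as you have written it, cannot work, and you have not supplied the mechanism that replaces it. After Step~1 each block $I_j$ is sent to $[0,1]$ by an affine map $t_j$, but the interior abscissae $x_{(j-1)m+k}$ land at \emph{block-dependent} positions $t_j(x_{(j-1)m+k})\in(0,1)$, and the target ordinates $y_{(j-1)m+k}$ are block-dependent as well; a single one-dimensional CPWL $\Psi:[0,1]\to\R$ with $N_2$ breakpoints, even after a per-block affine output shift, carries only $\calO(N_1+N_2)$ free parameters and cannot match $N_1(N_2+1)+1$ arbitrary values at arbitrary relative positions. What is actually needed is to drop the one-dimensional picture and let each second-layer neuron act on the full vector $\bm{u}(x)\in\R^{2N_1}$: its pre-activation $g_k(x)=\bm{w}_k^{T}\bm{u}(x)+b_k$ is then a CPWL function of $x$ whose only breakpoints are the first-layer hinge locations, and the $2N_1+1$ parameters in $(\bm{w}_k,b_k)$ are chosen so that $g_k$ crosses zero exactly once inside every block, at the prescribed point $x_{(j-1)m+k}$ for each $j=1,\ldots,N_1$ (these are $N_1$ linear conditions), with the remaining freedom fixing the slope increment block by block so that the final linear combination hits all the $y_i$. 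The hypothesis $y_i\ge0$ is precisely what keeps the relevant pieces of the $g_k$ on the correct side of zero so that the second-layer ReLUs do not clip them. Once Step~2 is rewritten in this genuinely multivariate form, your verification of (i)--(ii) goes through. (Minor: the plateau $\sigma(a_jx+b_j)-\sigma(a_jx+b_j')$ does not ``vanish outside'' $I_j$; it saturates to a nonzero constant on one side. This is harmless for the construction but should be stated correctly.)
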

In fact, Lemma \ref{lem:widthPower} is a part of Lemma $2.2$ in \cite{2019arXiv190210170S}. For the purpose of being self-contained, we present it as follows. 
\begin{lemma*}[Lemma 2.2 of \cite{2019arXiv190210170S}]
%	\label{lem:SquarePointsLemma}
	For any $m,n\in \N^+$, given any $m(n+1)+1$ samples $(x_i,y_i)\in \R^2$ with $x_0<x_1<x_2<\cdots<x_{m(n+1)}$ and $y_i\ge 0$ for $i=0,1,\cdots,m(n+1)$, there exists $\phi\in \NNF(\NNinput=1;\NNwidthvec=[2m,2n+1]\NNspace\NNoutput=1)$ satisfying the following conditions.
	\begin{enumerate}[(i)]
		\item $\phi(x_i)=y_i$ for $i=0,1,\cdots,m(n+1)$;
		\item $\phi$ is linear on each interval $[x_{i-1},x_{i}]$ for $i\notin \{(n+1)j:j=1,2,\cdots,m\}$;
		\item $\displaystyle \sup_{x\in[x_0,\,x_{m(n+1)}]} |\phi(x)| \le 3\max_{i\in \{0,1,\cdots,m(n+1)\}}y_i \prod_{k=1}^{n}\left(1+\tfrac{\max\{x_{j(n+1)+n}-x_{j(n+1)+k-1}:j=0,1,\cdots,m-1\} }
		{\min\{x_{j(n+1)+k}-x_{j(n+1)+k-1}:j=0,1,\cdots,m-1\} }\right)$.
	\end{enumerate}
\end{lemma*}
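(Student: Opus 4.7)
This lemma is the explicit point-fitting construction from \cite{2019arXiv190210170S}, and I would reproduce its proof via a direct ``chunk-then-interpolate'' construction of the $2$-hidden-layer ReLU network. Partition the $m(n+1)+1$ ordered samples into $m$ contiguous chunks of $n+1$ points each, indexed by $j=1,\ldots,m$: chunk $j$ contains the points $x_{(j-1)(n+1)},\ldots,x_{j(n+1)-1}$, and the $m$ ``free'' sub-intervals $[x_{j(n+1)-1},x_{j(n+1)}]$ separate the chunks (the last one linking into the leftover point $x_{m(n+1)}$). On the free sub-intervals no piecewise-linearity is demanded — this is the slack that allows compression to two hidden layers of widths $2m$ and $2n+1$.

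For the first hidden layer (width $2m$) I would allocate two ReLU units per chunk to build a ``clipped ramp'' $T_j(x)=\sigma(x-x_{(j-1)(n+1)})-\sigma(x-x_{j(n+1)-1})$, which vanishes before chunk $j$, grows linearly in $x$ on chunk $j$, and saturates after chunk $j$. Together the vector $\bm T(x)=(T_1(x),\ldots,T_m(x))$ both identifies which chunk contains $x$ (through the pattern of zero/saturated coordinates) and records the local offset of $x$ within that chunk (through the unique non-saturated coordinate). The second hidden layer will work with $\bm T(x)$ rather than with $x$ directly.

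For the second hidden layer (width $2n+1$), the key observation is that when $x$ lies in chunk $j$, only $T_j(x)$ varies with $x$; every other $T_{j'}$ contributes a constant to the pre-activation. Hence the $k$-th second-layer neuron $\sigma(\bm a_k^{T}\bm T(x)+b_k)$ acts on chunk $j$ as a single ReLU in the variable $x$ with slope $a_{k,j}$ and a threshold determined by $b_k$ together with the previously-saturated $a_{k,j'}$ for $j'<j$. By judicious choice of the $a_{k,j}$'s, the biases $b_k$, and a final output affine map, the $2n+1$ second-layer neurons can simultaneously place the $n-1$ interior breakpoints and produce the correct slope change at each breakpoint inside every chunk; roughly two neurons per interior breakpoint give enough freedom to match both location and slope-change per chunk, and the remaining units handle the chunk-boundary slopes. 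This yields a $\phi$ fulfilling (i) and (ii).

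For the sup-norm bound (iii), I would propagate bounds through the layers: on each chunk $j$ the $k$-th second-layer neuron amplifies its contribution by at most the ratio appearing inside the product in (iii), namely a worst-case over $j$ of $(x_{j(n+1)+n}-x_{j(n+1)+k-1})/(x_{j(n+1)+k}-x_{j(n+1)+k-1})$; compounding these gives the product over $k=1,\ldots,n$, and the factor of $3$ absorbs the final affine output combination. The main obstacle is the second-layer construction: producing $m$ distinct within-chunk piecewise linear interpolants from a layer of width only $2n+1$ (independent of $m$) requires exploiting the disjoint-support structure of the $T_j$'s so that a single second-layer ReLU can contribute a differently located breakpoint with a different slope change in each chunk simultaneously. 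Nailing down the precise coefficients $a_{k,j},b_k$ and verifying the sup-norm amplification bound is the most bookkeeping-heavy — but mechanical — part.
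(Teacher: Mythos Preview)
The paper does not prove this statement: it is quoted verbatim as Lemma~2.2 of \cite{2019arXiv190210170S} ``for the purpose of being self-contained,'' and only the weaker Lemma~\ref{lem:widthPower} (parts (i)--(ii)) is actually used. So there is no in-paper proof to compare your proposal against.

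That said, your high-level architecture --- $2m$ first-layer ramps encoding a chunk index plus local offset, then $2n+1$ second-layer ReLUs that act as a different one-variable ReLU on each chunk --- is exactly the right shape and is the mechanism behind the cited result. There is, however, a concrete gap in the version you wrote down. With $T_j(x)=\sigma(x-x_{(j-1)(n+1)})-\sigma(x-x_{j(n+1)-1})$, the vector $\bm T(x)$ is \emph{constant} on every free sub-interval $[x_{j(n+1)-1},x_{j(n+1)}]$: for $x$ there, $T_1,\ldots,T_j$ are already saturated while $T_{j+1},\ldots,T_m$ are still identically zero. Consequently $\phi$ is constant on each free interval, which forces $y_{j(n+1)-1}=y_{j(n+1)}$ for all $j$ --- contradicting (i) for generic data and leaving no way to hit the leftover sample $x_{m(n+1)}$.

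The repair is small but essential: let the $j$-th ramp keep growing across the free interval (e.g.\ clip at $x_{j(n+1)}$ rather than at $x_{j(n+1)-1}$), so that on $[x_{(j-1)(n+1)},x_{j(n+1)}]$ exactly one coordinate of $\bm T$ varies and the second layer can place breakpoints at $x_{(j-1)(n+1)+1},\ldots,x_{j(n+1)-1}$ while still moving freely on the last sub-interval to reach $y_{j(n+1)}$. With that correction your plan for (i)--(ii) goes through, and the product bound (iii) then falls out by tracking, for each interior breakpoint index $k$, the worst-case ratio of the span $x_{j(n+1)+n}-x_{j(n+1)+k-1}$ to the step $x_{j(n+1)+k}-x_{j(n+1)+k-1}$ that governs how large the $k$-th second-layer pre-activation can grow across a chunk.
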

\begin{lemma}
	\label{lem:wideToDeep}
	Given any $N,L,d\in \N^+$, it holds that \begin{equation*}
	    \begin{split}
	        &\quad \, \NN(\NNinput=d\NNspace\NNwidthvec=[N,NL]\NNspace\NNoutput=1)\\
	        &\subseteq \NN(\NNinput=d\NNspace\NNwidth\le 2N+2\NNspace \NNdepth\le L+1\NNspace\NNoutput=1).
	    \end{split}
	\end{equation*}
\end{lemma}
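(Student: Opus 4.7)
My plan is to unroll the $NL$ neurons of the second hidden layer of a wide network into $L$ batches of $N$ neurons, assigning one batch to each hidden layer of a deep narrow network, while carrying the $N$ first-hidden-layer outputs and a single running partial sum through the layers. Given $\phi\in\NN(\NNinput=d\NNspace\NNwidthvec=[N,NL]\NNspace\NNoutput=1)$, I would first write it in canonical form
\begin{equation*}
\phi(\bmx)=d_0+\sum_{\ell=1}^{L}\sum_{j=1}^{N}a_{\ell,j}\,\sigma\Big(\sum_{k=1}^{N}w_{\ell,j,k}\,h_k+c_{\ell,j}\Big),\qquad h_k=\sigma(\bmW_k\cdot\bmx+b_k),
\end{equation*}
and introduce $g_{\ell,j}\coloneqq\sigma\big(\sum_k w_{\ell,j,k}h_k+c_{\ell,j}\big)$ together with partial sums $s_m\coloneqq\sum_{\ell=1}^{m}\sum_j a_{\ell,j}g_{\ell,j}$, so that $\phi(\bmx)=s_L+d_0$.

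Next I would build a deep network with $L+1$ hidden layers. Hidden layer $1$ contains the $N$ neurons computing the $h_k$'s from $\bmx$ directly. For each subsequent hidden layer $\ell+1$, with $\ell\in\{1,\ldots,L\}$, I would allocate at most $2N+2$ neurons playing three roles: (i) $N$ neurons carry the $h_k$'s forward, exploiting $\sigma(h_k)=h_k$ since $h_k\ge 0$; (ii) $N$ neurons compute the $\ell$-th batch $g_{\ell,1},\ldots,g_{\ell,N}$ from the $h_k$'s available in the preceding layer; and (iii) $2$ neurons store the partial sum $s_{\ell-1}$ via the decomposition $s_{\ell-1}=\sigma(s_{\ell-1})-\sigma(-s_{\ell-1})$. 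The affine map feeding these last two neurons reads the previous layer's $\sigma(\pm s_{\ell-2})$ together with the freshly computed $g_{\ell-1,j}$'s living there and implements the single-step update $s_{\ell-1}=s_{\ell-2}+\sum_j a_{\ell-1,j}g_{\ell-1,j}$. Finally, the output is the affine map combining $\sigma(s_{L-1})-\sigma(-s_{L-1})$, the $g_{L,j}$'s in hidden layer $L+1$, and the constant $d_0$, which recovers $\phi(\bmx)$ exactly.

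By construction the deep network has depth $L+1$ and per-layer width at most $\max\{N,\,2N,\,2N+2\}=2N+2$, which gives the claimed inclusion. The construction is mostly bookkeeping; the one delicate point that I expect to be the main obstacle is confirming that every ingredient needed by the affine map into layer $\ell+1$ (namely the $h_k$'s, the $g_{\ell-1,j}$'s when $\ell\ge 2$, and the two pieces $\sigma(\pm s_{\ell-2})$ when $\ell\ge 2$) is simultaneously present in the preceding hidden layer's output, so that a single linear map followed by one ReLU activation suffices. Handling the boundary cases (hidden layer $2$, where $s_0=0$ renders the two sum-neurons superfluous, and hidden layer $L+1$, where the $h_k$-carrying slots could in principle be dropped) does not affect the $2N+2$ width bound, but deserves an explicit remark to ensure the width bound holds uniformly across all $L+1$ hidden layers.
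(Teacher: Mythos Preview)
Your proposal is correct and follows essentially the same construction as the paper: split the $NL$ second-layer neurons into $L$ blocks of $N$, carry the $N$ first-layer outputs forward using their nonnegativity, and maintain a running partial sum via the identity $x=\sigma(x)-\sigma(-x)$, yielding width $2N+2$ and depth $L+1$. The only cosmetic difference is that the paper swaps the roles of the symbols $\bm{g}$ and $\bm{h}$ relative to your $h_k$ and $g_{\ell,j}$, and it relegates the layer-by-layer bookkeeping you flag as the ``delicate point'' to a figure rather than spelling it out in text.
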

%This section proves Propositions \ref{lem:widthPower} to \ref{prop:pointFitting} in Section \ref{sec:main}. 
\begin{proof}%[Proof of Proposition \ref{lem:wideToDeep}]
	The key idea to prove Proposition \ref{lem:wideToDeep} is to re-assemble $\calO(L)$ sub-FNNs %with width $\calO(N)$ and depth $\calO(1)$ 
	in the shallower FNN in the left of Figure \ref{fig:wideToDeepIdea} to form a deeper one with width $\calO(N)$ and depth $\calO(L)$ on the right of Figure \ref{fig:wideToDeepIdea}. 
	\begin{figure}[!htp]        
		\centering
		\begin{minipage}{0.95\textwidth}
			\centering
			\begin{subfigure}[c]{0.25\textwidth}
				\centering            \includegraphics[width=0.8\textwidth]{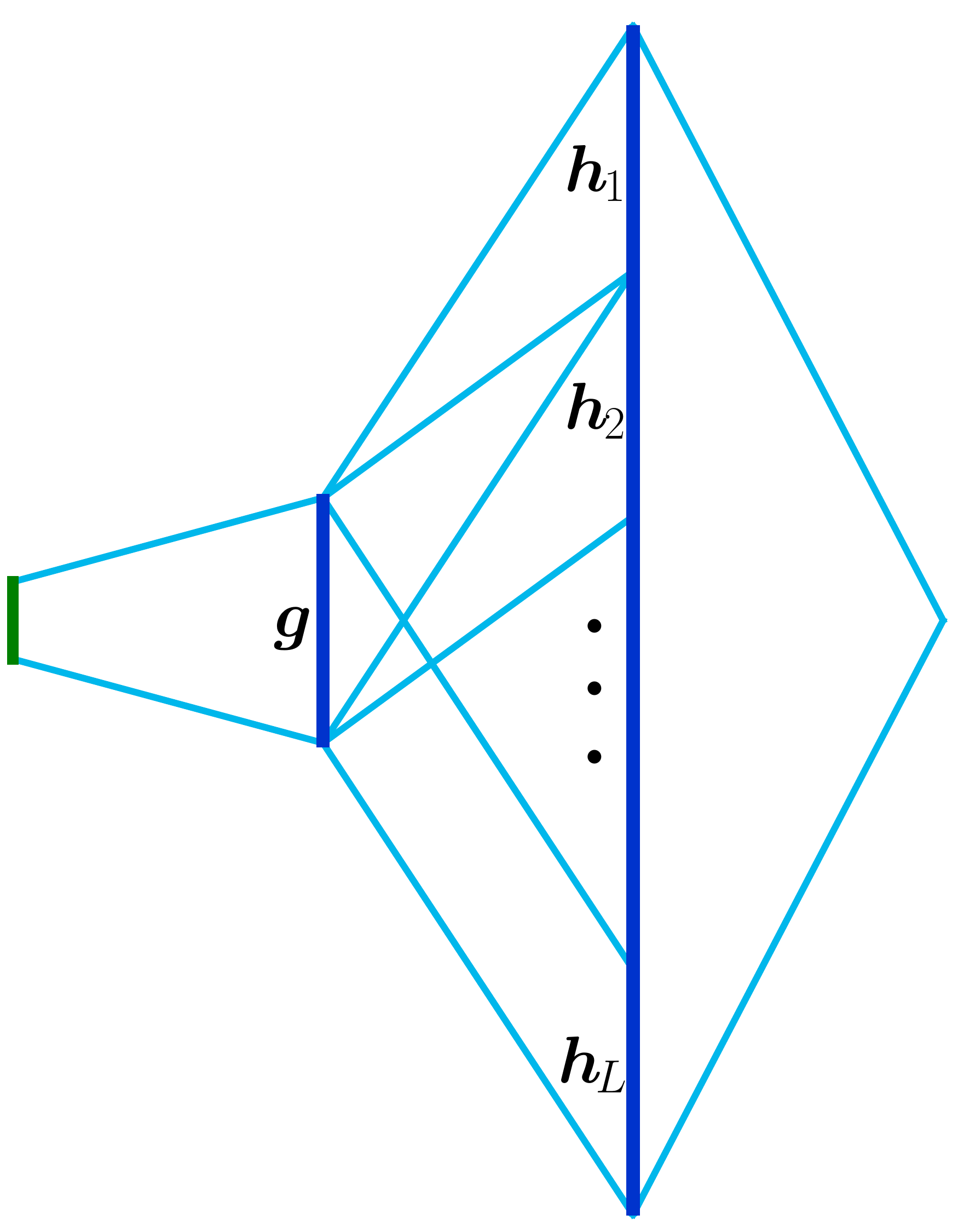}
				%\subcaption{}
			\end{subfigure}
			\begin{subfigure}[c]{0.04\textwidth}
				\centering
				{\vfill 
					$\Longrightarrow$
					\vfill}
			\end{subfigure}
			\begin{subfigure}[c]{0.5\textwidth}
				\centering            \includegraphics[width=0.9\textwidth]{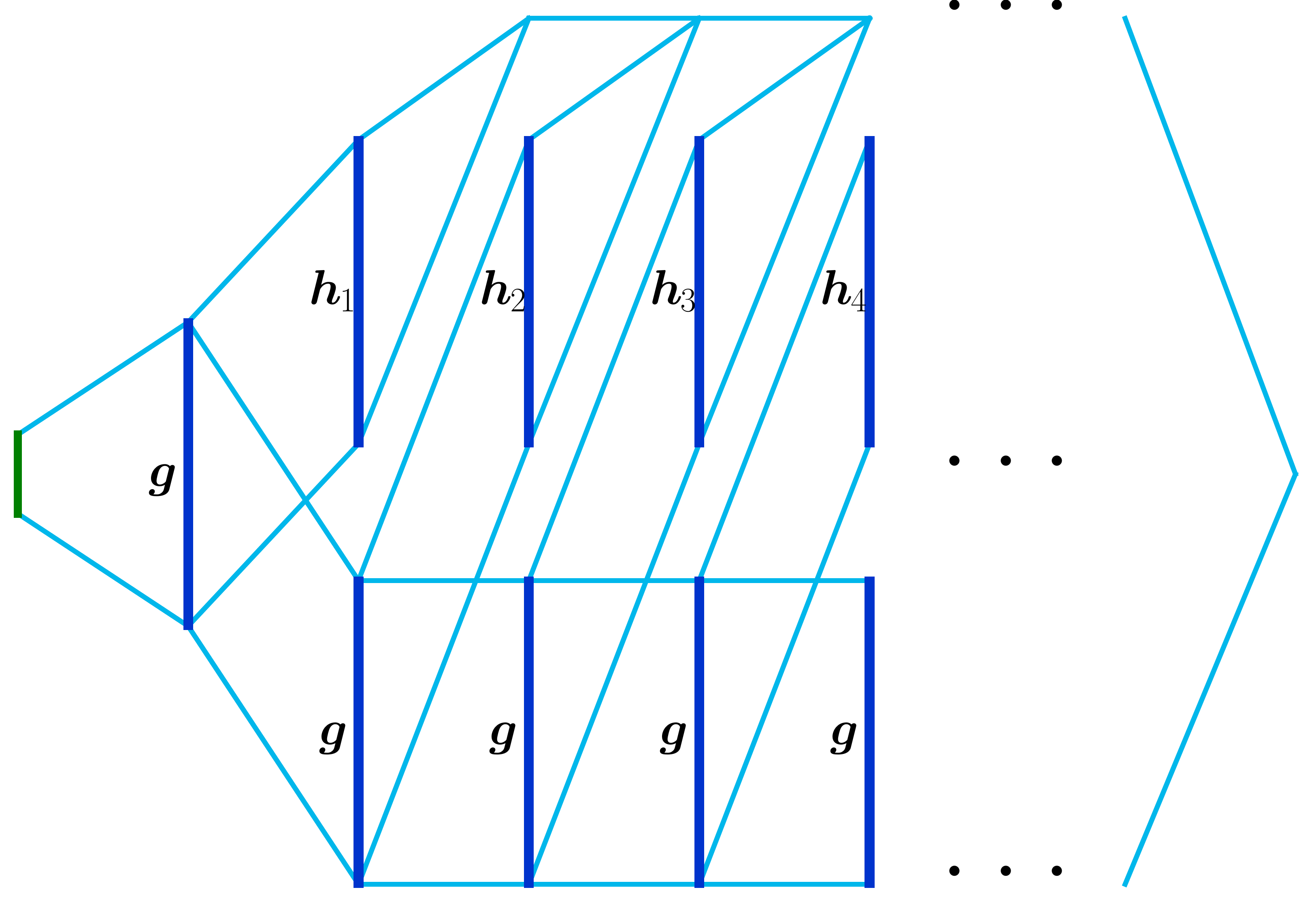}
				%\subcaption{}
			\end{subfigure}
		\end{minipage}
		\caption{An illustration of the main idea to prove Lemma \ref{lem:wideToDeep}.}
		\label{fig:wideToDeepIdea}
	\end{figure}
	%An illustration of the main idea to prove this proposition can be found in Figure \ref{fig:wideToDeep}. 

	For any $\phi\in \NNF(\NNinput=d\NNspace\NNwidthvec=[N,NL]\NNspace\NNoutput=1)$, $\phi$ can be implemented by a ReLU FNN described as 
	\begin{equation*}
	\begin{aligned}
	\bmx \myto{2.2}^{\bm{W}_0,\ \bm{b}_0}_\sigma \bm{g}\myto{2.2}^{\bm{W}_1,\ \bm{b}_1}_\sigma \bm{h} \myto{2.2}^{\bm{W}_2,\ \bm{b}_2} \phi(\bmx),
	\end{aligned}
	\end{equation*}
	where $\bm{g}$ and $\bm{h}$ are the output of the first hidden layer and the second hidden layer, respectively. Note that \begin{equation*}
	\bm{g}=\sigma(\bm{W}_0\cdot\bmx+\bm{b}_0),\quad \bm{h}=\sigma(\bm{W}_1\cdot\bm{g}+\bmb_1),\quad \text{ and }\quad \phi(\bmx)=\bm{W}_2\cdot\bm{h}+\bmb_2.
	\end{equation*}
	We can evenly divide $\bm{h}\in \R^{NL\times 1}$, $\bm{b}_1\in \R^{NL\times 1}$, $\bm{W}_1\in\R^{NL\times N}$, and $\bm{W}_2\in \R^{1\times NL}$ into $L$ parts as follows: 
	\[\bmh=\left[\begin{array}{c}
	\bm{h}_1\\ \bm{h}_2\\ \vdots \\ \bm{h}_L
	\end{array}\right],\quad 
	\bmb_1=\left[\begin{array}{c}
	\bm{b}_{1,1}\\ \bm{b}_{1,2}\\ \vdots \\ \bm{b}_{1,L}
	\end{array}\right],\quad \bmW_1=\left[\begin{array}{c}\bm{W}_{1,1} \\ \bm{W}_{1,2}\\\vdots\\ \bm{W}_{1,L}\end{array}\right],
	\] 
	and $\bmW_2=[\bm{W}_{2,1},\bm{W}_{2,2},\cdots,\bm{W}_{2,L}]$, where $\bm{h}_\ell\in \R^{N\times 1}$, $\bm{b}_{1,\ell}\in \R^{N\times 1}$, $\bm{W}_{1,\ell}\in \R^{N\times N}$, and $\bm{W}_{2,\ell}\in \R^{1\times N}$ for $\ell=1,2,\cdots,L$. 
	\red{	Then, for $\ell=1,2,\cdots,L$, we have
		\begin{equation}\label{eq:wideToDeep1}
			\bm{h}_\ell=\sigma(\bm{W}_{1,\ell}\cdot\bm{g}+\bmb_{1,\ell})\quad \text{ and }\quad \phi(\bmx)=\bm{W}_2\cdot\bm{h}+\bmb_2=\sum_{j=1}^L\bmW_{2,j}\cdot\bmh_{j}+\bmb_2.
	\end{equation}}
	
		Define 
	\begin{equation*}
	s_0\coloneqq 0 ,\quad \tn{and}\quad  s_\ell\coloneqq  \sum_{j=1}^{\ell}\bm{W}_{2,j}\cdot\bm{h}_j,\quad \tn{ for $\ell=1,2,\cdots,L$.}
	\end{equation*}
	Then $\phi(\bmx)=\bmW_2\cdot \bmh+\bmb_2=s_L+\bmb_2$ and 
	\begin{equation}
	\label{eq:wideToDeep2}
	s_\ell=s_{\ell-1}+\bm{W}_{2,\ell}\cdot\bm{h}_\ell,\quad \tn{ for $\ell=1,2,\cdots,L$.}
	\end{equation}
     %$\bm{h}=\left[\begin{array}{c}\bm{h}_1 \\\vdots\\\bm{h}_L \end{array}\right]\ge 0$since it is the output of the ReLU function. 
	Hence, it is easy to check that $\phi$ can also be implemented by the deep network  shown in Figure \ref{fig:wideToDeepNew}.
	\begin{figure}[!htp]
		\centering
		\includegraphics[width=0.95\textwidth]{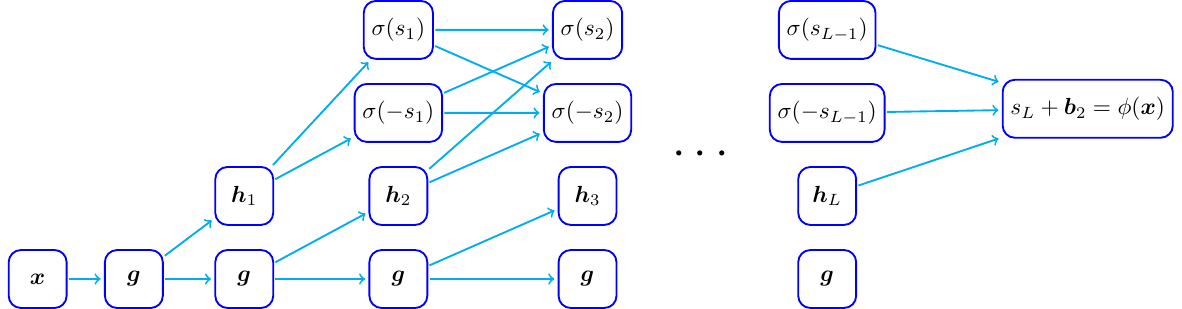}
		\caption{A illustration of the desired network based on Equation \eqref{eq:wideToDeep1} and \eqref{eq:wideToDeep2}, and  the fact $x=\sigma(x)-\sigma(-x)$ for any $x\in \R$. We omit the activation function ($\sigma$) if the input is non-negative.}
		\label{fig:wideToDeepNew}
	\end{figure}
	%	where ``$\to$" represents the composition of a ReLU activation function and an appropriate linear transform with weights and bias from the transforms in \eqref{eqn:p32} up to the change of their signs.
	It is clear that the network has the architecture of Figure \ref{fig:wideToDeepNew} is with width $2N+2$ and depth $L+1$. So, we finish the proof.
\end{proof}

With Lemma \ref{lem:widthPower} and \ref{lem:wideToDeep} in hand, we are ready to present the detailed proof of Proposition \ref{prop:stepFunc}. 
\begin{proof}[Proof of Proposition \ref{prop:stepFunc}]
	We divide the proof into two cases: $d=1$ and $d\ge 2$.
	
	\mycase{1}{$d=1$.}
	
	In this case,  $K=\lfloor N^{1/d}\rfloor^2 \lfloor L^{2/d}\rfloor=N^2L^2$. Denote $M=N^2L$ and consider the sample set \[
	\begin{split}\big\{(1,M-1),(2,0)\big\}\cup
	\big\{(\tfrac{m}{M},m):m=0,1,\cdots,M-1\big\}
	\cup \big\{(\tfrac{m+1}{M}-\delta,m):m=0,1,\cdots,M-2\big\}
	.\end{split}\]
	Its size is $2M+1=N\cdot\big((2NL-1)+1\big)+1$. By Lemma \ref{lem:widthPower} (set $N_1=N$ and $N_2=2NL-1$ therein), there exists $\phi_1\in \NNF(\NNwidthvec=[2N,2(2NL-1)+1])=\NNF(\NNwidthvec=[2N,4NL-1])$ such that
	\begin{itemize}
		\item $\phi_1(\tfrac{M-1}{M})=\phi_1(1)=M-1$ and $\phi_1(\tfrac{m}{M})=\phi_1(\tfrac{m+1}{M}-\delta)=m$ for $m=0,1,\cdots,M-2$;
		\item $\phi_1$ is linear on $[\tfrac{M-1}{M},1]$ and each interval $[\tfrac{m}{M},\tfrac{m+1}{M}-\delta]$ for $m=0,1,\cdots,M-2$. 
	\end{itemize}
	Then
	\begin{equation}
	\label{eq:returnmStepFunc}
	\phi_1(x)=m, \quad \tn{if} \ x\in [\tfrac{m}{M},\tfrac{m+1}{M}-\delta\cdot 1_{\{m\le M-2\}}],\quad \tn{for $m=0,1,\cdots,M-1$.}
	\end{equation}

	Now consider the another sample set \[\big\{(\tfrac{1}{M},L-1),(2,0)\big\}\cup\big\{(\tfrac{\ell}{ML},\ell):\ell=0,1,\cdots,L-1\big\}\cup \big\{(\tfrac{\ell+1}{ML}-\delta,\ell):\ell=0,1,\cdots,L-2\big\}.\] 
	Its size is $2L+1=1\cdot\big((2L-1)+1\big)+1$. By Lemma \ref{lem:widthPower} (set $N_1=1$ and $N_2=2L-1$ therein), there exists $\phi_2\in \NNF(\NNwidthvec=[2,2(2L-1)+1])=\NN(\NNwidthvec=[2,4L-1])$ such that
	\begin{itemize}
		\item $\phi_2(\tfrac{L-1}{ML})=\phi_2(\tfrac{1}{M})=L-1$ and $\phi_2(\tfrac{\ell}{ML})=\phi_2(\tfrac{\ell+1}{ML}-\delta)=\ell$ for $\ell=0,1,\cdots,L-2$;
		\item $\phi_2$ is linear on $[\tfrac{L-1}{ML},\tfrac{1}{M}]$ and each interval $[\tfrac{\ell}{ML},\tfrac{\ell+1}{ML}-\delta]$ for $\ell=0,1,\cdots,L-2$. 
	\end{itemize}
	It follows that, for  $m=0,1,\cdots,M-1$ and $\ell=0,1,\cdots,L-1$,
	\begin{equation}
	\label{eq:returnlStepFunc}
	\phi_2(x-\tfrac{m}{M})=\ell, \quad  \tn{for}\ x\in [\tfrac{mL+\ell}{ML},\tfrac{mL+\ell+1}{ML}-\delta\cdot 1_{\{\ell\le L-2\}}].
	\end{equation}

	The fact $K=ML$ implies each $k\in \{0,1,\cdots,K-1\}$ can be unique represented by $k=mL+\ell$ for $m=0,1,\cdots,M-1$ and $\ell=0,1,\cdots,L-1$. 
	Then the desired function $\phi$ can be implemented by a ReLU FNN shown in Figure \ref{fig:stepFunc}. Clearly, 
	\begin{equation*}
	%	\label{eq:extractIndex1D}
	\phi(x)=k,\quad \tn{if $x\in [\tfrac{k}{K},\tfrac{k}{K}-\delta\cdot 1_{\{k\le K-2\}}]$ for $k\in\{0,1,\cdots,K-1\}.$}
	\end{equation*}
	By Lemma \ref{lem:wideToDeep}, $\phi_1\in\NNF(\NNwidthvec=[2N,4NL-1])\subseteq\NNF(\NNwidth\le 4N+2\NNspace\NNdepth\le 2L+1) $ and $\phi_2 \in\NNF(\NNwidthvec=[2,4L-1])\subseteq\NNF(\NNwidth\le 6\NNspace\NNdepth\le 2L+1)$, implying $\phi\in \NNF(\NNwidth\le\max\{4N+2+1,6+1\}= 4N+3\NNspace\NNdepth\le (2L+1)+2+(2L+1)+1= 4L+5)$.
	So we finish the proof for the case $d=1$.
	\begin{figure}[!htp]
		\centering
		\includegraphics[width=0.88\textwidth]{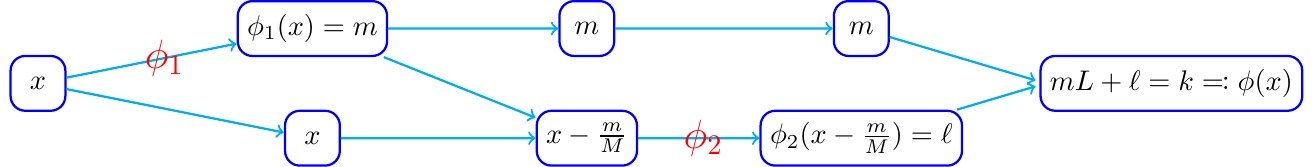}
		\caption[An illustration of the desired ReLU FNN architecture for proving Theorem \ref{prop:stepFunc}]{An illustration of the ReLU FNN implementing $\phi$ based on Equation \eqref{eq:returnmStepFunc} and \eqref{eq:returnlStepFunc} with $x\in [\tfrac{k}{K},\tfrac{k}{K}-\delta\cdot 1_{\{k\le K-2\}}]=[\tfrac{mL+\ell}{ML},\tfrac{mL+\ell+1}{ML}-\delta\cdot 1_{\{m\le M-2 \tn{ or }\ell\le L-2\}}]$, where $k=mL+\ell$ for $m=0,1,\cdots,M-1$ and $\ell=0,1,\cdots,L-1$.
		``\textcolor{red}{$\phi_1$}'' and ``\textcolor{red}{$\phi_2$}'' near ``\textcolor{cyan}{$\longrightarrow$}'' represent the respective ReLU FNN implementing itself. We omit the activation function ReLU if the input of a neuron is non-negative.
		}
		\label{fig:stepFunc}
	\end{figure}
	
	\mycase{2}{$d\ge2$.}
	
	Now we consider the case when $d\ge2$. Consider the sample set 
	\begin{equation*}
	\begin{split}
	\big\{(1,{K-1}),(2,0)\big\}\cup\big\{(\tfrac{k}{K},\red{k}):k=0,1,\cdots,K-1\big\}
	\cup \big\{(\tfrac{k+1}{K}-\delta,k):k=0,1,\cdots,K-2\big\},
	\end{split}
	\end{equation*} 
	whose size is $2K+1=\lfloor N^{1/d}\rfloor \big((2\lfloor N^{1/d}\rfloor \lfloor L^{2/d}\rfloor-1)+1\big)+1$.
	By Lemma \ref{lem:widthPower} (set $N_1=\lfloor N^{1/d}\rfloor$ and $N_2=2\lfloor N^{1/d}\rfloor \lfloor L^{2/d}\rfloor-1$ therein), there exists $\phi$ in 
	\begin{equation*}
	\begin{split}
	  &\quad\, \NNF(\NNwidthvec =[2\lfloor N^{1/d}\rfloor,2(2\lfloor N^{1/d}\rfloor \lfloor L^{2/d}\rfloor-1)+1])\\
	&\subseteq \NNF(\NNwidthvec =[2\lfloor N^{1/d}\rfloor,4\lfloor N^{1/d}\rfloor \lfloor L^{2/d}\rfloor-1])
	\end{split}
	\end{equation*} 
	such that
	\begin{itemize}
		\item $\phi(\tfrac{K-1}{K})=\phi(1)=K-1$, and $\phi(\tfrac{k}{K})=\phi(\tfrac{k+1}{K}-\delta)=k$ for $k=0,1,\cdots,K-2$;
		\item $\phi$ is linear on $[\tfrac{K-1}{K},1]$ and each interval $[\tfrac{k}{K},\tfrac{k+1}{K}-\delta]$ for $k=0,1,\cdots,K-2$. 
	\end{itemize}
	Then
	\begin{equation*}
	\phi(x)=k, \quad \tn{if} \ x\in [\tfrac{k}{K},\tfrac{k+1}{K}-\delta\cdot 1_{\{k\le K-2\}}]  \tn{ for $k=0,1,\cdots,K-1$.}
	\end{equation*}
	
	By Lemma \ref{lem:wideToDeep}, 
	\begin{equation*}
	\begin{split}
	\phi &\in \NNF(\NNwidthvec =[2\lfloor N^{1/d}\rfloor,4\lfloor N^{1/d}\rfloor \lfloor L^{2/d}\rfloor-1])\\
	&\subseteq
	\NNF(\NNwidth\le 4\lfloor N^{1/d}\rfloor+2\NNspace\NNdepth \le 2\lfloor L^{2/d}\rfloor+1)\\
	&\subseteq 
	\NNF(\NNwidth\le 4\lfloor N^{1/d}\rfloor+3\NNspace \NNdepth \le 4L+5).
	\end{split}
	\end{equation*} 
	which means we finish the proof for the case $d\ge 2$.
\end{proof}

%%%%%%%%%%%%%%%%%%%%%%%%%%%%%%%%%%%%%%%%%%%%%%%%%%%%
\subsection{Proof of Proposition \ref{prop:pointFitting}}
\label{sec:proofProp2}

The proof of Proposition \ref{prop:pointFitting} is based on the bit extraction technique in \cite{Bartlett98almostlinear,pmlr-v65-harvey17a}. In fact, we modify this technique to extract the sum of many bits rather than one bit and this modification can be summarized in Lemma \ref{lem:BitExtraction} and \ref{lem:BitExtractionMulti} below.

\begin{lemma}%[Bit Extraction]
	\label{lem:BitExtraction}
	For any $L\in \N^+$, there exists a function $\phi$ in
	\[\NNF(\NNinput=2\NNspace \NNwidth\le 7\NNspace\NNdepth\le 2L+1\NNspace\NNoutput=1)\]
	such that, for any $\theta_1,\theta_2,\cdots,\theta_L\in \{0,1\}$,  we have 
	\begin{equation*}
	\phi(\bin    0.\theta_1\theta_2\cdots \theta_L,\,\ell)=\sum_{j=1}^{\ell}\theta_j,\quad \tn{for $\ell=1,2,\cdots,L$.}
	\end{equation*}
\end{lemma}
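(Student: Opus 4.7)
The plan is to unroll an iterative bit-extraction and, in parallel, accumulate only the first $\ell$ of the extracted bits. Writing $x=\bin 0.\theta_1\theta_2\cdots\theta_L$, define $x_0\coloneqq x$ and $x_i\coloneqq 2x_{i-1}-\theta_i$, so that $x_i=\bin 0.\theta_{i+1}\cdots\theta_L$ and in particular $x_{i-1}$ lies on the discrete grid $\{k\cdot 2^{-(L-i+1)}:0\le k<2^{L-i+1}\}$. This grid structure allows $\theta_i$ to be read off exactly by the sharp ReLU threshold
\[
\theta_i = 2^L\bigl[\sigma(2x_{i-1}-1+2^{-L})-\sigma(2x_{i-1}-1)\bigr],
\]
which equals $0$ whenever $2x_{i-1}\le 1-2^{-L}$ and equals $1$ whenever $2x_{i-1}\ge 1$. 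If we also carry the integer $\ell$ unchanged through the network, the indicator $I_i\coloneqq 1_{\{i\le\ell\}}$ equals $\sigma(\ell-i+1)-\sigma(\ell-i)$, with the step index $i$ appearing only as a layer-dependent bias. Because $\theta_i,I_i\in\{0,1\}$, their product collapses to the single ReLU $\theta_iI_i=\sigma(\theta_i+I_i-1)$, and with $s_0\coloneqq 0$ and $s_i\coloneqq s_{i-1}+\theta_iI_i$ the desired output is $s_L=\sum_{j=1}^{\ell}\theta_j$.

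To realize this recursion inside width $7$ and depth at most $2L+1$, I will pipeline the update across two hidden layers per step. The ``setup'' layer for step $i$ holds the seven neurons
\[
\sigma(2x_{i-1}-1+2^{-L}),\ \sigma(2x_{i-1}-1),\ \sigma(\ell-i+1),\ \sigma(\ell-i),\ x_{i-1},\ s_{i-1},\ \ell,
\]
where the three pass-throughs are $\sigma$ of already-nonnegative values, and $s_{i-1}$ itself is produced as $\sigma(s_{i-2}+p_{i-1})$ from the corresponding outputs of the previous combine layer (for $i=1$ that slot is unused, since $s_0=0$). The ``combine'' layer for step $i$ then produces, using only four neurons, $x_i=\sigma(2u_5-2^Lu_1+2^Lu_2)$, the product $p_i=\sigma(2^L(u_1-u_2)+(u_3-u_4)-1)$, and the pass-throughs $s_{i-1}=\sigma(u_6)$ and $\ell=\sigma(u_7)$, where $u_1,\ldots,u_7$ denote the outputs of the setup layer. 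Stacking $L$ such pairs gives $2L$ hidden layers of width at most $7$, and the final affine output map reads off $s_L=s_{L-1}+p_L$ from the last combine layer.

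The main obstacle is that $\theta_iI_i$ is nonlinear in the neurons of the setup layer, so it cannot be absorbed directly into the affine pre-activation that computes $x_i$ without either inflating the width beyond $7$ or requiring a third layer per step. The pipelining scheme above circumvents this by computing $p_i$ alongside $x_i$ in the combine layer and deferring its addition to the accumulator by one layer, where $s_{i-1}=\sigma(s_{i-2}+p_{i-1})$ is produced by a single ReLU (valid because both summands are nonnegative). The remaining verifications---that the sharp-threshold formula for $\theta_i$ is consistent for every $i=1,\ldots,L$ given the $2^{-(L-i+1)}$-grid constraint on $x_{i-1}$, that every layer's outputs are $\sigma$ of an affine combination of the previous layer's outputs, and that neither the width bound $7$ nor the depth bound $2L+1$ is exceeded---are routine bookkeeping.
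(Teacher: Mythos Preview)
Your proposal is correct and follows essentially the same approach as the paper: both iteratively extract $\theta_i$ via a sharp ReLU threshold exploiting the $2^{-L}$ grid spacing, realize the indicator $1_{\{i\le\ell\}}$ as $\sigma(\ell-i+1)-\sigma(\ell-i)$, collapse the binary product $\theta_i I_i$ to $\sigma(\theta_i+I_i-1)$, and pipeline one bit per two hidden layers. The only cosmetic differences are that the paper carries $\ell-j$ through the network as the pair $\sigma(\ell-j),\,\sigma(j-\ell)$ whereas you pass $\ell$ directly, and the paper scales the threshold inside the ReLU arguments rather than outside; neither affects the width or depth count.
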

\begin{proof}%[Proof of Lemma \ref{lem:BitExtraction}]
	Given $\theta_1,\theta_2,\cdots,\theta_L\in\{0,1\}$,  define  \[\xi_j\coloneqq \bin    0.\theta_j \theta_{j+1}\cdots \theta_L,\quad \tn{for $j=1,2,\cdots,L$}\]  and
	\begin{equation*}
	\mathcal{T}(x)\coloneqq \left\{{1,\ x\ge 0, \atop 0,\ x<0.}\right.
	\end{equation*}
	Then we have \[\theta_j=\mathcal{T}(\xi_j-1/2),\quad \tn{ for $j=1,2,\cdots,L$,} \] and 
	\[\xi_{j+1}=2\xi_j-\theta_j, \quad \tn{ for  $j=1,2,\cdots,L-1$.}\]
	I would like to point out that, by  above two iteration equations,  we can iteratively get $\xi_1,\theta_1,\xi_2,\theta_2,\cdots,\xi_L,\theta_L$ when  $\xi_1$ is given. Based on this iteration idea, the rest proof can be divided into three steps.
	
	\mystep{1}{Simplify the iteration equations.}
	
	Note that $\mathcal{T}(x)=\sigma(x/\delta+1)-\sigma(x/\delta)$ for any $x\notin (-\delta,0)$.
	By setting  $\delta=1/2-\sum_{j=2}^{L}2^{-j}=2^{-L}$, we have $\xi_j-1/2\notin (-\delta,0)$ for all $j$, implying	
	\begin{equation}
	\label{eq:thetaIteration}
	\begin{split}
	\theta_j=\mathcal{T}(\xi_j-1/2)
	&=\sigma\big((\xi_j-1/2)/\delta+1\big)-\sigma\big((\xi_j-1/2)/\delta\big) \\
	&=\sigma\big(\calL(\xi_j)+1\big)-\sigma\big(\calL(\xi_j)\big),   \end{split}
	\end{equation}
	for $j=1,2,\cdots,L$,  where $\calL$ is the linear map given by $\calL(x)=(x-1/2)/\delta$. It follows that, for $j=1,2,\cdots,L-1$,	
	\begin{equation}
	\label{eq:xiIteration}
	\begin{split}
	\xi_{j+1}=2\xi_j-\theta_j=2\xi_j-\sigma\big(\calL(\xi_j)+1\big)+\sigma\big(\calL(\xi_j)\big).
	\end{split}
	\end{equation}
	
	\mystep{2}{Design a ReLU FNN to output $\sum_{j=1}^{\ell}\theta_j$.}
	
	It is easy to design a ReLU FNN to  output $\theta_1,\theta_2,\cdots,\theta_L$ by Equation \eqref{eq:thetaIteration} and \eqref{eq:xiIteration} when using $\xi_1=\bin  0.\theta_1\theta_2\cdots\theta_L$ as the input. However, it is highly non-trivial to construct a ReLU FNN to output $\sum_{j=1}^{\ell}\theta_j$ with another input $\ell$, since many operations like multiplication and comparison are not allowed in designing ReLU FNNs.
	
	\begin{figure}[!htp]
		\centering
		\includegraphics[width=0.999\textwidth]{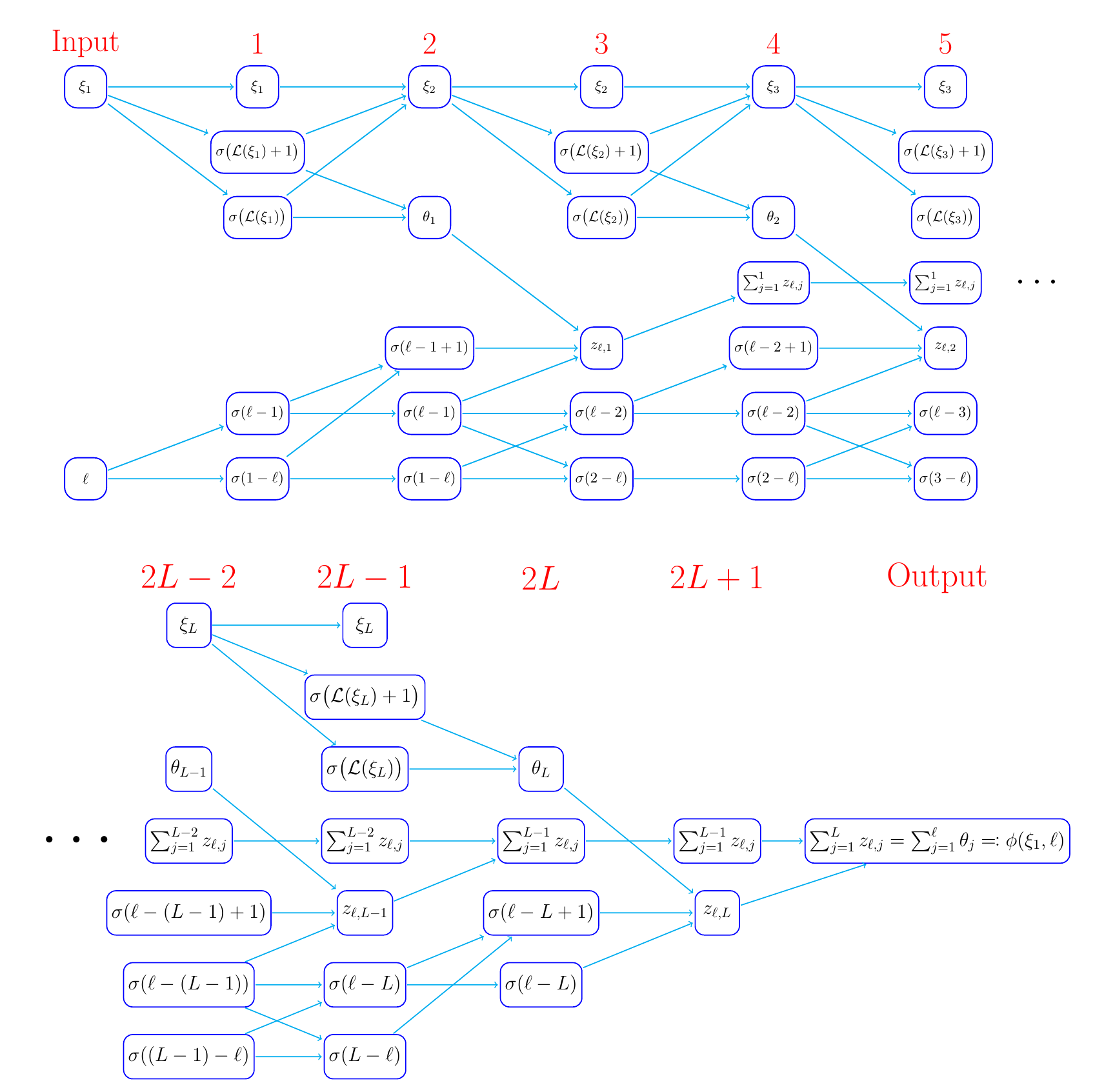}
		\caption{A illustration of the target ReLU FNN implementing $\phi$ to output $\sum_{j=1}^{L} z_{j,\ell}=\sum_{j=1}^{\ell}\theta_j=\phi(\xi_1,\ell)$ given the input $(\xi_1,\ell)=(\bin   0.\theta_1\theta_2\cdots \theta_L,\ell)$  for  $\ell\in \{1,2,\cdots,L\}$ and $\theta_1,\theta_2,\cdots,\theta_L\in\{0,1\}$.  The construction is mainly based on Equation \eqref{eq:thetaIteration},  \eqref{eq:xiIteration},  \eqref{eq:zDef}, and  \eqref{eq:outputSum}.
			The numbers above the architecture indicate the order of hidden layers.
			It builds a whole iteration step for every two layers. We output both $\sigma(\ell-j)$ and $\sigma(j-\ell)$ in the hidden layers for $j=1,2,\cdots,L$  because of the fact  $x=\sigma(x)-\sigma(-x)$ for any $x\in\R$. We omit the activation function ($\sigma$) if the input of a neuron is non-negative.  Note that all parameters of this network are essentially determined by Equation \eqref{eq:thetaIteration} and \eqref{eq:xiIteration}, which are valid no matter what $\theta_1,\theta_2,\cdots,\theta_L\in \{0,1\}$ are. Thus,  the desired function $\phi$ implemented by this network is independent of $\theta_1,\theta_2,\cdots,\theta_L\in \{0,1\}$.}
		\label{fig:bitExtration}
	\end{figure}
	
	Now let us establish a formula to represent $\sum_{j=1}^{\ell}\theta_j$ in a form of a ReLU FNN as follows:
	
	The fact that $x_1x_2=\sigma(x_1+x_2-1)$ for any  $x_1,x_2\in \{0,1\}$ implies
	\begin{equation*}
	%	\label{eq:outputSumxFormula}
	\begin{split}
	\sum_{j=1}^{\ell}\theta_j=\sum_{j=1}^{L}\theta_j\mathcal{T}(\ell-j)
	&=\sum_{j=1}^{L}\sigma\big(\theta_j+\calT(\ell-j)-1\big)\\
	&=\sum_{j=1}^{L}\sigma\big(\theta_j+\sigma(\ell-j+1)-\sigma(\ell-j)-1\big),
	\end{split}
	\end{equation*} 
	for $\ell=1,2,\cdots,L$,
	where the last equality comes from the fact $\mathcal{T}(n)=\sigma(n+1)-\sigma(n)$ for any integer $n$.
	
	To simplify the notations, we define
	\begin{equation}
	\label{eq:zDef}
	z_{\ell,j}\coloneqq  \sigma\big(\theta_j+\sigma(\ell-j+1)-\sigma(\ell-j)-1\big),
	\end{equation}
	for  $\ell=1,2,\cdots,L$ and $j=1,2,\cdots,L$.
	Then, 
	\begin{equation}
	\label{eq:outputSum}
	\sum_{j=1}^{\ell}\theta_j=\sum_{j=1}^Lz_{\ell,j},\quad \tn{for $\ell=1,2,\cdots,L$.}
	\end{equation}

	With  Equation 
	\eqref{eq:thetaIteration},  \eqref{eq:xiIteration}, \eqref{eq:zDef}, and \eqref{eq:outputSum}  in hand, it is easy to construct a function $\phi$ implemented by a ReLU FNN with the desired width and depth outputting $    \sum_{j=1}^{\ell}\theta_j=\sum_{j=1}^{L}z_{\ell,j}$ given the input $(\xi_1,\ell)=(\bin   0.\theta_1\theta_2\cdots\theta_L,\ell)$ for  $\ell\in \{1,2,\cdots,L\}$ and $\theta_1,\theta_2,\cdots,\theta_L\in\{0,1\}$. The details of construction 
are shown in Figure \ref{fig:bitExtration}. Clearly,   the  network in Figure \ref{fig:bitExtration} is with width $7$ and depth $2L+1$, which implies 
	\[\phi\in \NNF(\NNinput=2\NNspace\NNwidth\le 7\NNspace \NNdepth\le 2L+1\NNspace\NNoutput=1).\]
	So we finish the proof.
\end{proof}

Next, we introduce Lemma \ref{lem:BitExtractionMulti} as an advanced version of Lemma \ref{lem:BitExtraction}.
\begin{lemma}
	\label{lem:BitExtractionMulti}
		For any $N,L\in \N^+$, any $\theta_{m,\ell}\in \{0,1\}$ for $m=0,1,\cdots,M-1$ and  $\ell=0,1,\cdots,L-1$, where $M=N^2L$, there exists a function $\phi$ implemented by a ReLU FNN with width $4N+3$ and depth $3L+3$ such that
	\[\phi(m,\ell)=\sum_{j=0}^{\ell}\theta_{m,j}, \quad \tn{
	for $m=0,1,\cdots,M-1$ and $\ell=0,1,\cdots,L-1$.}\]
\end{lemma}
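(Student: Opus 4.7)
The plan is to reduce this multi-row prefix-sum problem to the single-row case handled by Lemma \ref{lem:BitExtraction}. Concretely, for each $m\in\{0,1,\ldots,M-1\}$ let $\xi_m\coloneqq\bin 0.\theta_{m,0}\theta_{m,1}\cdots\theta_{m,L-1}$ and define $\psi:\R\to\R$ as a point-fitting function satisfying $\psi(m)=\xi_m$ for all such $m$. Then Lemma \ref{lem:BitExtraction}, applied with the index shift $\ell\mapsto\ell+1$, gives a ReLU FNN $\widetilde\phi$ of width $7$ and depth $2L+1$ such that $\widetilde\phi(\xi_m,\ell+1)=\sum_{j=0}^{\ell}\theta_{m,j}$ for all $m$ and all $\ell\in\{0,1,\ldots,L-1\}$. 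The desired function is therefore $\phi(m,\ell)\coloneqq\widetilde\phi\bigl(\psi(m),\ell+1\bigr)$.

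First I would construct $\psi$ via Lemma \ref{lem:widthPower}. Since there are $M=N^2L$ values of $m$, I can take the sample set $\{(m,\xi_m):m=0,1,\ldots,M-1\}$ together with one auxiliary sample (for instance $(M,0)$) to bring the total to $M+1=N(NL-1+1)+1$ points; then Lemma \ref{lem:widthPower} with $N_1=N,\ N_2=NL-1$ produces a fitting function $\psi\in\NNF(\NNwidthvec=[2N,\,2NL-1])$. Embedding this class into $\NNF(\NNwidthvec=[2N,\,2NL])$ and then applying Lemma \ref{lem:wideToDeep} yields $\psi\in\NNF(\NNwidth\le 4N+2,\NNdepth\le L+1)$. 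This is the deep, narrow sub-network that turns the integer $m$ into its associated binary fraction $\xi_m$.

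Next I would carry the second input $\ell$ through the first sub-network so that both $\psi(m)$ and $\ell$ are available simultaneously at its output. Because $\ell\ge 0$ on the intended inputs, I can propagate it with a single ReLU neuron per layer using the identity $\ell=\sigma(\ell)$; this costs one extra channel across the $L+1$ layers of $\psi$, raising the width from $4N+2$ to $4N+3$. At the output of the first block, an affine transition computes $(\psi(m),\ell+1)$, which is then fed into the bit-extraction sub-network of Lemma \ref{lem:BitExtraction}, contributing another $2L+1$ layers of width $7$. The total depth is $(L+1)+1+(2L+1)=3L+3$ and the maximum width across all layers is $\max\{4N+3,\,7\}=4N+3$ (noting $N\ge 1$), which matches the advertised bounds.

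The only real subtlety is the bookkeeping in the middle: keeping $\ell$ intact as $m$ is routed through the wide-to-deep conversion of Lemma \ref{lem:wideToDeep} (whose +2 channels are already dedicated to accumulating the partial sum $s_\ell$ in that proof), and then splicing the pair $(\psi(m),\ell+1)$ into the two input neurons of Lemma \ref{lem:BitExtraction}'s network. This is handled by reserving a dedicated ReLU channel for $\ell$ throughout the first block and collapsing it with a bias of $+1$ at the transition layer; no multiplications are needed, so the construction remains inside the ReLU FNN class. Everything else (defining $\psi$, invoking the bit-extraction lemma, and verifying $\phi(m,\ell)=\sum_{j=0}^{\ell}\theta_{m,j}$) is immediate from the two cited lemmas.
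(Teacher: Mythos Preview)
Your proposal is correct and follows essentially the same approach as the paper: encode the bits of row $m$ into $\xi_m=\bin 0.\theta_{m,0}\cdots\theta_{m,L-1}$, fit $m\mapsto\xi_m$ via Lemma~\ref{lem:widthPower} (with $N_1=N$, $N_2=NL-1$) and convert to a narrow-deep network via Lemma~\ref{lem:wideToDeep}, carry $\ell$ alongside with one extra ReLU channel, and then feed $(\xi_m,\ell+1)$ into the bit-extraction network of Lemma~\ref{lem:BitExtraction}; the width and depth counts $\max\{4N+3,7\}=4N+3$ and $(L+1)+1+(2L+1)=3L+3$ match the paper's exactly.
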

\begin{proof}
	Define \[y_m\coloneqq \bin    0.\theta_{m,0}\theta_{m,1}\cdots \theta_{m,L-1},\quad\tn{for $m=0,1,\cdots,M-1$.}\]  
	Consider the sample set $\{(m,y_m):m=0,1,\cdots,M\}$, whose cardinality is $M+1=N\big((NL-1)+1\big)+1$. By Lemma \ref{lem:widthPower} (set $N_1=N$ and $N_2=NL-1$ therein), there exists 
	\[\begin{split}\phi_1&\in\NNF(\NNinput=1\NNspace\NNwidthvec=[2N,2(NL-1)+1])\\
	&= \NN(\NNinput=1\NNspace\NNwidthvec=[2N,2NL-1])\end{split}\]  such that 
	\[\phi_1(m)=y_m,\quad   \tn{for  $m=0,1,\cdots,M-1$.} \]
	
	By Lemma \ref{lem:BitExtraction}, there exists \[\phi_2\in \NNF(\NNinput=2\NNspace\NNwidth\le 7\NNspace\NNdepth\le 2L+1)\] such that, for any  $\xi_1,\xi_2,\cdots,\xi_L\in \{0,1\}$, we have
	\[\phi_2(\bin   0.\xi_1\xi_2\cdots \xi_L,\,\ell)=\sum_{j=1}^{\ell}\xi_j,\quad \tn{for  $\ell=1,2,\cdots,L$.}\]
	It follows that,
	for any  $\xi_0,\xi_1,\cdots,\xi_{L-1}\in \{0,1\}$, we have
	\[\phi_2(\bin   0.\xi_0\xi_1\cdots \xi_{L-1},\,\ell+1)=\sum_{j=0}^{\ell}\xi_j,\quad \tn{for  $\ell=0,1,\cdots,L-1$.}\]
	
	Thus, for $m=0,1,\cdots,M-1$ and $\ell=0,1,\cdots,L-1$, we have
	\[\phi_2(\phi_1(m),\ell+1)=\phi_2(y_m,\ell+1)=\phi_2(0.\theta_{m,0}\theta_{m,1}\cdots \theta_{m,L-1},\, \ell+1)=\sum_{j=0}^{\ell}\theta_{m,j}.\]
	
	Hence, the desired function function $\phi$ can be implemented by the network shown in Figure \ref{fig:bitExtractionMulti}. By Lemma \ref{lem:wideToDeep}, $\phi_1\in \NNF(\NNwidthvec=[2N,2NL-1])\subseteq \NNF(\NNwidth\le 4N+2\NNspace\NNdepth\le L+1)$, implying the network in Figure \ref{fig:bitExtractionMulti} is with width $\max\{(4N+2)+1,7\}=4N+3$ and depth $(2L+1)+1+(L+1)=3L+3$.
	\begin{figure}
		\centering
		\includegraphics[width=0.85\textwidth]{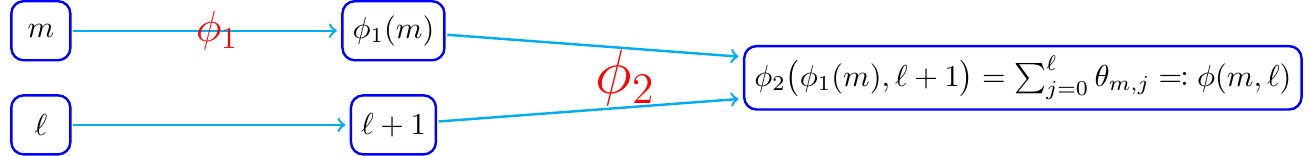}	
		\caption[A illustration of the desired network architecture for proving Theorem \ref{thm:bitExtraction}]{A illustration of the  network implementing the desired function $\phi$.  ``\textcolor{red}{$\phi_1$}'' and ``\textcolor{red}{$\phi_2$}'' near ``\textcolor{cyan}{$\longrightarrow$}'' represent the respective ReLU FNN implementing itself. We omit the activation function ReLU if the input of a neuron is non-negative.
		}
		\label{fig:bitExtractionMulti}
	\end{figure}
	So we finish the proof.
\end{proof}

\vspace{5pt}
Next, we apply Lemma \ref{lem:BitExtractionMulti} to prove Lemma \ref{lem:vcdimPoints1} below, which is a key intermediate conclusion to prove Proposition \ref{prop:pointFitting}.

\vspace{5pt}
\begin{lemma}
	\label{lem:vcdimPoints1}
	For any $\epsilon>0$, $L,N\in \N^+$, denote $M=N^2L$ and assume   $\{y_{m,\ell}\ge 0:m=0,1,\cdots,M-1 \tn{ and } \ell=0,1,\cdots,L-1\}$ is a sample set with   
	\[|y_{m,\ell}-y_{m,\ell-1}|\le \epsilon,\quad  \tn{for $m=0,1,\cdots,M-1\tn{ and } \ell=1,2,\cdots,L-1$.}\]
	Then there exists
	$\phi\in \NN(\NNinput=2\NNspace\NNwidth\le 12N+8\NNspace\NNdepth\le 3L+6)$ such that 
	\begin{enumerate}[(i)]
		\item $|\phi(m,\ell)-y_{m,\ell}|\le \epsilon$, for $m=0,1,\cdots,M-1 \tn{ and } \ell=0,1,\cdots,L-1$;
		
		\item $0\le \phi(x_1,x_2)\le  \max\{y_{m,\ell}:m=0,1,\cdots,M-1 \tn{ and } \ell=0,1,\cdots,L-1\}$, for any $x_1,x_2\in \R$.
	\end{enumerate}    
\end{lemma}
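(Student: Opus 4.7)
The plan is to discretize each $y_{m,\ell}$ to the nearest multiple of $\epsilon$ from below, express the resulting integer-valued grid as an initial offset plus a cumulative sum of $\{0,1\}$ increments in two directions, and implement each piece by a sub-network obtained from the previously established lemmas, running them in parallel on the input $(m,\ell)$.

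I will set $a_{m,\ell}=\lfloor y_{m,\ell}/\epsilon\rfloor$, so that $\epsilon a_{m,\ell}\in[0,y_{m,\ell}]$ and $|\epsilon a_{m,\ell}-y_{m,\ell}|\le \epsilon$. The hypothesis $|y_{m,\ell}-y_{m,\ell-1}|\le \epsilon$ forces $|a_{m,\ell}-a_{m,\ell-1}|\le 1$, so I split the discrete increments into two $\{0,1\}$-valued arrays by defining $\theta^+_{m,0}=\theta^-_{m,0}=0$ and, for $\ell\ge 1$,
\[
\theta^+_{m,\ell}=\max(a_{m,\ell}-a_{m,\ell-1},0),\qquad \theta^-_{m,\ell}=\max(a_{m,\ell-1}-a_{m,\ell},0).
\]
Then $a_{m,\ell}=a_{m,0}+\sum_{j=0}^{\ell}(\theta^+_{m,j}-\theta^-_{m,j})$, reducing the construction to three independent tasks: compute $\epsilon a_{m,0}$ as a function of $m$, and compute the two cumulative bit sums as functions of $(m,\ell)$.

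For the initial-offset task I point-fit the $M+1$ samples $\{(m,\epsilon a_{m,0}):m=0,\dots,M-1\}\cup\{(M,0)\}$ by Lemma \ref{lem:widthPower} with $N_1=N$, $N_2=NL-1$, obtaining a network of $\NNwidthvec=[2N,2NL-1]$ which Lemma \ref{lem:wideToDeep} reshapes into width $4N+2$ and depth $L+1$. For the two bit-sum tasks I directly invoke Lemma \ref{lem:BitExtractionMulti}, each giving width $4N+3$ and depth $3L+3$. I then run the three sub-networks in parallel on $(m,\ell)$, padding the offset sub-network with identity channels up to depth $3L+3$, and collapse the outputs via the single affine combination
\[
\tilde\phi(m,\ell)=\epsilon a_{m,0}+\epsilon\sum_{j=0}^{\ell}\theta^+_{m,j}-\epsilon\sum_{j=0}^{\ell}\theta^-_{m,j},
\]
which evaluates to $\epsilon a_{m,\ell}$ at grid points and hence yields (i); the widths add up to exactly $(4N+2)+(4N+3)+(4N+3)=12N+8$.

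To secure (ii) on all of $\R^2$ rather than only on grid points, I clip $\tilde\phi$ to $[0,E]$ with $E=\max_{m,\ell}y_{m,\ell}$ via the two-hidden-layer composition $\phi(x_1,x_2)=E-\sigma(E-\sigma(\tilde\phi(x_1,x_2)))$, which acts as the identity on $[0,E]$ and hence preserves all grid values (note $\epsilon a_{m,\ell}\in[0,E]$). This brings the total depth to $3L+5\le 3L+6$. The main obstacle is choosing a splitting of the discrete increments into $\{0,1\}$-streams compatible with Lemma \ref{lem:BitExtractionMulti} while keeping the three parallel sub-networks and the clipping within the stated width and depth budgets; the symmetric split into $\theta^+$ and $\theta^-$ above is tailor-made for this, after which the remaining accounting is routine.
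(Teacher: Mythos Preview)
Your proposal is correct and follows essentially the same approach as the paper: discretize via $a_{m,\ell}=\lfloor y_{m,\ell}/\epsilon\rfloor$, split the $\{-1,0,1\}$-valued increments into two $\{0,1\}$-streams, realize the initial offset by Lemma~\ref{lem:widthPower}+\ref{lem:wideToDeep} and the two cumulative sums by Lemma~\ref{lem:BitExtractionMulti}, run all three in parallel for width $(4N+2)+2(4N+3)=12N+8$, and finally clip to $[0,y_{\max}]$. The only noteworthy difference is the clipping: the paper implements $\min\{\sigma(\cdot),y_{\max}\}$ via the four-neuron identity $\min\{x_1,x_2\}=\tfrac12\big(\sigma(x_1+x_2)-\sigma(-x_1-x_2)-\sigma(x_1-x_2)-\sigma(-x_1+x_2)\big)$, while you use the one-neuron formula $E-\sigma(E-\sigma(\cdot))$; both cost two extra hidden layers and stay within the width budget, and your depth count $3L+5\le 3L+6$ is consistent with the stated bound.
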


\begin{proof}%[Proof of Lemma \ref{lem:vcdimPoints1}]
	Define 
\begin{equation*}
a_{m,\ell}\coloneqq \lfloor y_{m,\ell}/\varepsilon\rfloor 
,\quad \tn{for $m=0,1,\cdots,M-1 \tn{ and } \ell=0,1,\cdots,L-1$.}
\end{equation*}
We will construct a function implemented by a ReLU FNN to map the index $(m,\ell)$ to $a_{m,\ell}\varepsilon$ for $m=0,1,\cdots,M-1 \tn{ and } \ell=0,1,\cdots,L-1$. 

Define $b_{m,0}\coloneqq 0$ and 
$b_{m,\ell}\coloneqq a_{m,\ell}-a_{m,\ell-1}$ for $m=0,1,\cdots,M-1 \tn{ and } \ell=1,\cdots,L-1$. 
Since $|y_{m,\ell}-y_{m,\ell-1}|\le \varepsilon$ for all $m$ and $\ell$, we have $b_{m,\ell}\in \{-1,0,1\}$. Hence, there exist $c_{m,\ell}$ and $d_{m,\ell}\in\{0,1\}$ such that 
$b_{m,\ell}=c_{m,\ell}-d_{m,\ell}$, which implies
\begin{equation*}
\begin{split}
a_{m,\ell}=a_{m,0}+\sum_{j=1}^{\ell}(a_{m,j}-a_{m,j-1})
&=a_{m,0}+\sum_{j=1}^{\ell}b_{m,j}
=a_{m,0}+\sum_{j=0}^{\ell}b_{m,j}\\
&=a_{m,0}+\sum_{j=0}^{\ell}c_{m,j}-\sum_{j=0}^{\ell}d_{m,j}.
\end{split}
\end{equation*}
for $m=0,1,\cdots,M-1 \tn{ and } \ell=1,\cdots,L-1$.

For the sample set $\{(m,a_{m,0}):m=0,1,\cdots,M-1\}\cup \{(M,0)\}$, whose size is $M+1=N\cdot\big((NL-1)+1\big)+1$, by Lemma \ref{lem:widthPower} (set $N_1=N$ and $N_2=NL-1$ therein), there exists $\psi_{1}\in \NNF(\NNwidthvec=[2N,2(NL-1)+1])=\NNF(\NNwidthvec=[2N,2NL-1])$ such that 
\[\psi_{1}(m)=a_{m,0},\quad \tn{for $m=0,1,\cdots,M-1$.}\]

By Lemma \ref{lem:BitExtractionMulti}, there exist $\psi_{2}, \psi_{3}\in \NNF(\NNwidth\le 4N+3\NNspace\NNdepth\le 3L+3)$ such that \[\psi_{2}(m,\ell)=\sum\limits_{j=0}^{\ell}c_{m,j}\quad\tn{and}\quad  \psi_{3}(m,\ell)=\sum\limits_{j=0}^{\ell}d_{m,j},\]
\tn{for $m=0,1,\cdots,M-1 \tn{ and } \ell=0,1,\cdots,L-1$.}
Hence, it holds that 
\begin{equation}
\label{eq:returnaml}
a_{m,\ell}=a_{m,0}+\sum_{j=0}^{\ell}c_{m,j}-\sum_{j=0}^{\ell}d_{m,j}=\psi_{1}(m)+\psi_{2}(m,\ell)-\psi_{3}(m,\ell),
\end{equation}
for $m=0,1,\cdots,M-1 \tn{ and } \ell=0,1,\cdots,L-1$.

Define 
\begin{equation*}
y_{\tn{max}}\coloneqq\max\{ y_{m,\ell}: m=0,1,\cdots,M-1 \tn{ and } \ell=0,1,\cdots,L-1\}.
\end{equation*}
Then the desired function can be implemented by two sub-networks shown in Figure \ref{fig:contFuncPointFitting}.

\begin{figure}[!htp]
	\centering
	\begin{subfigure}[c]{0.37\textwidth}
		\centering
		\includegraphics[width=0.98\textwidth]{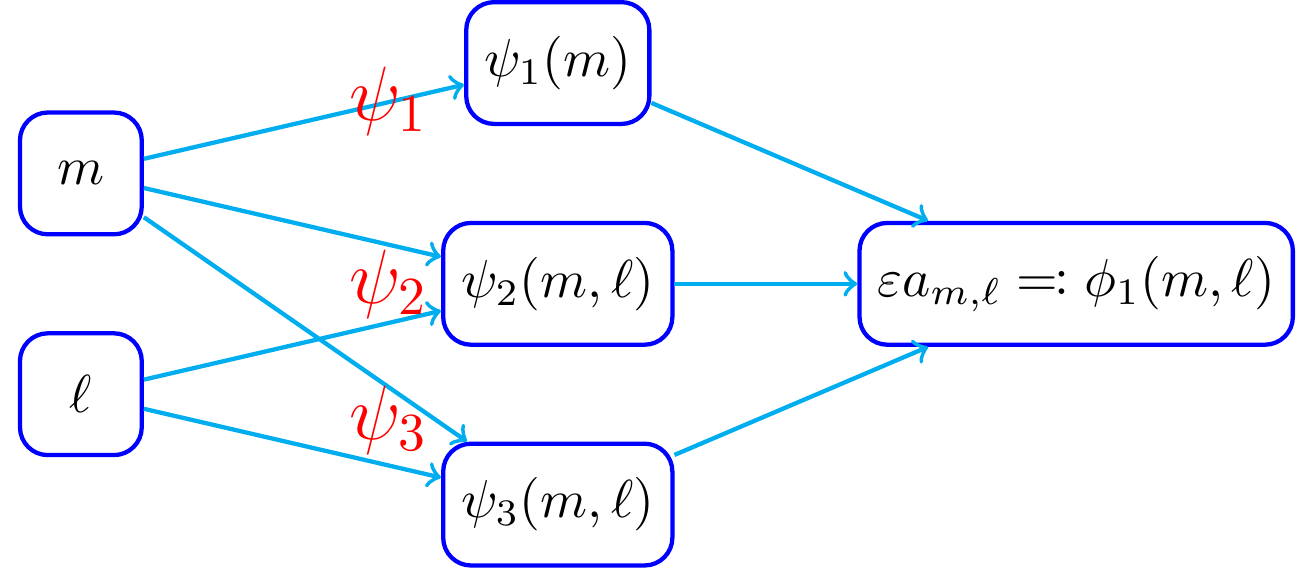}
		\subcaption{$\phi_1$}
	\end{subfigure}
	\begin{subfigure}[c]{0.53\textwidth}
		\centering
		\includegraphics[width=0.98\textwidth]{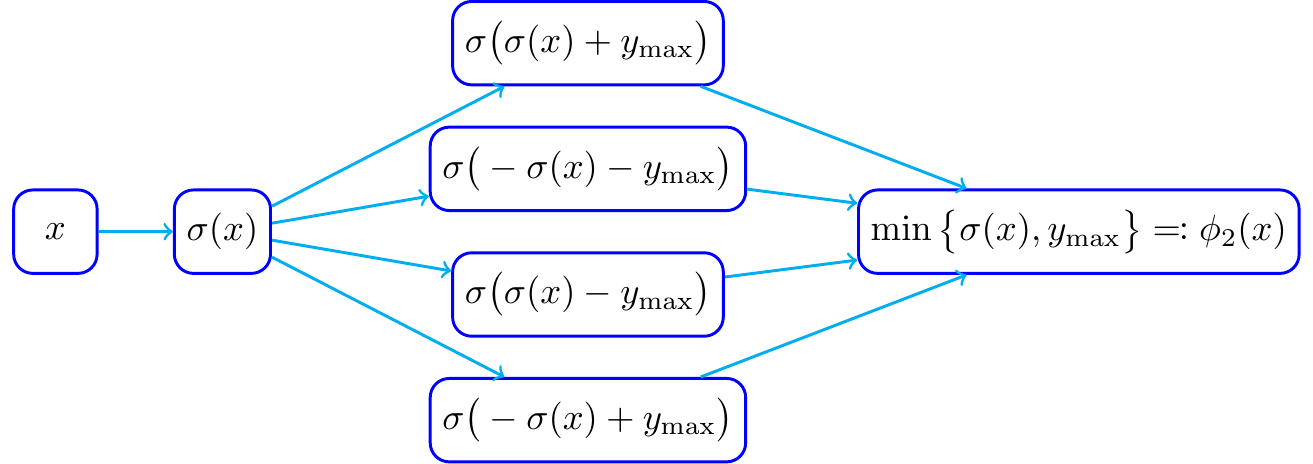}
		\subcaption{$\phi_2$}
	\end{subfigure}		
	\caption[Illustrations of two  sub-networks implementing the desired function $\phi=\phi_2\circ\phi_1$ based Equation \eqref{eq:returnaml}]{Illustrations of two  sub-networks implementing the desired function $\phi=\phi_2\circ\phi_1$ based Equation \eqref{eq:returnaml} and  the fact $\min\{x_1,x_2\}=\tfrac{x_1+x_2-|x_1-x_2|}{2}=\tfrac{\sigma(x_1+x_2)-\sigma(-x_1-x_2)-\sigma(x_1-x_2)-\sigma(-x_1+x_2)}{2}$. %The index $\beta\in \{0,1,\cdots,ML-1\}$ is unique represented by $\beta=mL+\ell$ for $m=0,1,\cdots,M-1$ and $\ell=0,1,\cdots,L-1$. 
		$y_{\tn{max}}$ is given by $\max\{ y_{m,\ell}: m=0,1,\cdots,M-1 \tn{ and } \ell=0,1,\cdots,L-1\}$. 
		``\textcolor{red}{$\psi_1$}'',``\textcolor{red}{$\psi_2$}'', and ``\textcolor{red}{$\psi_3$}'' near ``\textcolor{cyan}{$\longrightarrow$}'' represent the respective ReLU FNN implementing itself. We omit the activation function ReLU if the input of a neuron is non-negative.
		}
	\label{fig:contFuncPointFitting}
\end{figure}

By Lemma \ref{lem:wideToDeep},  $\psi_1\in \NNF(\NNinput=1\NNspace\NNwidthvec=[2N,2NL-1])\subseteq \NNF(\NNinput=1\NNspace\NNwidth\le 4N+2;\NNdepth\le L+1)$.
Note that $\psi_{2}, \psi_{3}\in \NNF(\NNwidth\le 4N+3\NNspace\NNdepth\le 3L+3)$.	
Thus,   $\phi_1\in\NNF(\NNwidth\le (4N+2)+2(4N+3)= 12N+8\NNspace\NNdepth\le (3L+3)+1=3L+4)$ as shown in Figure \ref{fig:contFuncPointFitting}.  And it is clear that $\phi_2\in \NNF(\NNwidth\le 4\NNspace\NNdepth\le 2)$, implying 
$\phi=\phi_2\circ\phi_1\in\NNF(\NNwidth\le 12N+8\NNspace\NNdepth\le (3L+4)+2=3L+6)$.

Clearly,  $0\le\phi(x_1,x_2)\le y_{\tn{max}}$ for any $x_1,x_2\in \R$, since $\phi(x_1,x_2)= \phi_2\circ\phi_1(x_1,x_2)=\max\{\sigma(\phi_1(x_1,x_2)),y_{\tn{max}}\}$. 

Note that $0\le \varepsilon a_{m,\ell}=\varepsilon \lfloor y_{m,\ell}/\varepsilon\rfloor \le y_{\tn{max}}$. Then we have $\phi(m,\ell)=\phi_2\circ\phi_1(m,\ell)=\phi_2(\varepsilon a_{m,\ell})=\max\{\sigma(\varepsilon a_{m,\ell}),y_{\tn{max}}\}=\varepsilon a_{m,\ell}$. Therefore,
\begin{equation*}
\begin{split}
|\phi(m,\ell)-y_{m,\ell}|=\left|a_{m,\ell}\varepsilon-y_{m,\ell}\right|=\big| \lfloor y_{m,\ell}/\varepsilon\rfloor\varepsilon-y_{m,\ell}\big|
\le \varepsilon,
\end{split}
\end{equation*}
for $m=0,1,\cdots,M-1$ and $\ell=0,1,\cdots,L-1$. Hence, we finish the proof.
\end{proof}

Finally, we apply Lemma \ref{lem:vcdimPoints1} to prove Proposition \ref{prop:pointFitting}.

\begin{proof}[Proof of Proposition \ref{prop:pointFitting}]
	Let $M=N^2L$, then we may assume $J=ML$ since we can set $y_{J-1}=y_{J}=y_{J+1}=\cdots=y_{ML-1}$ if $J<ML$. 

For the sample set
\[\{(mL,m):m=0,1,\cdots,M\}\cup \{(mL+L-1,m):m=0,1,\cdots,M-1\},\]
whose size is $2M+1=N\cdot\big((2NL-1)+1\big)+1$,
by Lemma \ref{lem:widthPower} (set $N_1=N$ and $N_2=NL-1$ therein), there exist $\phi_1\in \NNF(\NNinput=1\NNspace\NNwidthvec=[2N,2(2NL-1)+1])=\NNF(\NNinput=1\NNspace\NNwidthvec=[2N,4NL-1])$ such that
\begin{itemize}
	\item $\phi_1(ML)=M$ and $\phi_1(mL)=\phi_1(mL+L-1)=m$ for $m=0,1,\cdots,M-1$;
	\item $\phi_1$ is linear on each interval $[mL,mL+L-1]$ for $m=0,1,\cdots,M-1$.
\end{itemize}
It follows that 
\begin{equation}
\label{eq:phi1returnml}
\phi_1(j)=m,\quad \tn{and}\quad  j-L\phi_1(j)=\ell,\quad \tn{where $j=mL+\ell$},
\end{equation}
for $m=0,1,\cdots,M-1$ and $\ell=0,1,\cdots,L-1$.

Note that any number  $j$ in $\{0,1,\dots,J-1\}$ can be uniquely indexed as $j=mL+\ell$ for $m=0,1,\cdots,M-1$ and  $\ell=0,1,\cdots,L-1$.
So we can denote $y_j=y_{mL+\ell}$ as  $y_{m,\ell}$.
Then by Lemma \ref{lem:vcdimPoints1}, there exists $\phi_2\in \NNF(\NNwidth\le 12N+8\NNspace\NNdepth\le 3L+6)$ such that 
\begin{equation}
\label{eq:phi2Mniusyml}
|\phi_2(m,\ell)-y_{m,\ell}|\le \varepsilon,\quad \tn{for $m=0,1,\cdots,M-1 \tn{ and } \ell=0,1,\cdots,L-1$},
\end{equation}
and 
\begin{equation}
\label{eq:phi2UB}
0\le \phi_2(x_1,x_2)\le  y_{\tn{max}},\quad \tn{for any $x_1,x_2\in \R$,}
\end{equation}
where $y_{\tn{max}}\coloneqq\max\{y_{m,\ell}:m=0,1,\cdots,M-1 \tn{ and } \ell=0,1,\cdots,L-1\}=\max\{y_{j}:j=0,1,\cdots,ML-1\}$.

\begin{figure}[!htp]
	\centering
	\includegraphics[width=0.8\textwidth]{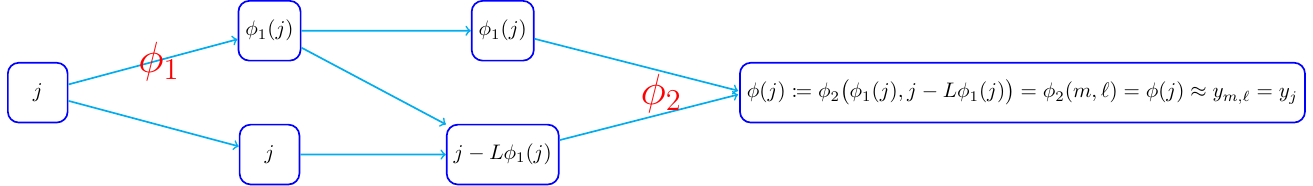}	
	\caption{A illustration of the ReLU FNN implementing the desired function $\phi$ based Equation \eqref{eq:phi1returnml}. The index $j\in \{0,1,\cdots,ML-1\}$ is unique represented by $j=mL+\ell$ for $m=0,1,\cdots,M-1$ and $\ell=0,1,\cdots,L-1$. 
		``\textcolor{red}{$\phi_1$}'' and ``\textcolor{red}{$\phi_2$}'' near ``\textcolor{cyan}{$\longrightarrow$}'' represent the respective ReLU FNN implementing itself. We omit the activation function ReLU if the input of a neuron is non-negative.}
	\label{fig:pointFittingThree}
\end{figure}

Note that $\phi_1\in \NNF(\NNinput=1\NNspace\NNwidthvec=[2N,4NL-1])\subseteq \NNF(\NNinput=1\NNspace\NNwidth\le 8N+2;\NNdepth\le L+1)$ by Lemma \ref{lem:wideToDeep} and $\phi_2\in \NNF(\NNwidth\le 12N+8\NNspace\NNdepth\le 3L+6)$. So $\phi\in \NNF(\NNwidth\le 12N+8\NNspace\NNdepth\le (L+1)+2+(3L+6)= 4L+9)$ as shown in Figure \ref{fig:pointFittingThree}.

Equation \eqref{eq:phi2UB} implies 
\begin{equation*}
0\le \phi(x)\le  y_{\tn{max}},\quad \tn{for any $x\in \R$,}
\end{equation*}
since $\phi$ is given by $\phi(x)=\phi_2\big(\phi_1(x),x-L\phi_1(x)\big)$.

Represent $j\in \{0,1,\cdots,ML-1\}$ via $j=mL+\ell$ for $m=0,1,\cdots,M-1$ and $\ell=0,1,\cdots,L-1$,  then we have, by Equation \eqref{eq:phi2Mniusyml},
\begin{equation*}
\begin{split}
|\phi(j)-y_j|=|\phi_2\big(\phi_1(j),j-L\phi_1(j)\big)-y_j|=|\phi_2(m,\ell)-y_{m,\ell}|\le \varepsilon.
\end{split}
\end{equation*}
 So we finish the proof.
\end{proof}

We would like to remark that the key idea in the proof of Proposition \ref{prop:pointFitting} is the bit extraction technique in Lemma \ref{lem:BitExtraction}, which allows us to store  $L$ bits in a binary number $\bin  0.\theta_1\theta_2\cdots \theta_L$ and extract each bit $\theta_i$. The extraction operator can be efficiently carried out via a deep ReLU neural network demonstrating the power of depth.

%%%%%%%%%%%%%%%%%%%%%%%%%%%%%%%%%%%%%%%%%%%%%
\section{Neural networks \sj{approximation and evaluation in practice}}
\label{sec:NNP}

This section is concerned with neural networks \sj{approximation and evaluation} in practice, e.g., approximating functions defined on irregular domains or domains with a low-dimensional structure, and neural network computation in parallel computing. \sj{In the practical training of FNNs, the approximation rate in this paper can only be observed if the global minimizers of neural network optimization can be identified. Since there is no existing optimization algorithm guaranteeing a global minimizer, it is challenging to observe the proved approximation rate currently. Developing optimization algorithms for global minimizers is another interesting research topic as a future work.}

\subsection{Approximation on irregular domain}

In this section, we consider approximating continuous functions defined on irregular domains by deep ReLU FNNs. The construction is through extending the target function to a cubic domain, applying Theorem \ref{thm:main}, and finally restricting the constructed FNN back to the irregular domain.

Given any uniformly continuous and real-valued function $f$ defined on a metric space $S$ with a metric $d_S(\cdot,\cdot)$,  we define the (optimal) modulus of continuity of $f$ on a subset $E\subseteq S$ as 
\begin{equation*}
\omega_f^{E}(r)\coloneqq \sup\{|f(\bmx_1)-f(\bmx_2)|:, d_S(\bmx_1,\bmx_2)\le r,\, \bmx_1,\bmx_2\in E\},\quad \tn{for any $r\ge 0$}.
\end{equation*}
For the purpose of consistency and simplicity, $\omega_f(\cdot)$ is short of $\omega_f^{[0,1]^d}(\cdot)$. 

First, let us present two lemmas for (approximately) extending (almost) continuous functions on $E$ to (almost) continuous functions on $S$. These lemmas are similar to the well-known results for extending Lipschitz or differentiable functions in \cite{mcshane1934,10.2307/1989708}. We generalize these results to a broader class of functions required in the proof of Theorem \ref{thm:mainIrregularDomain}.

\begin{lemma}[Approximate Extension of Almost-Continuous Functions]
    \label{lem:extensionContiuous2}
    Assume $S$ is a metric space with a metric $d_S(\cdot,\cdot)$ and $\omega:[0,\infty)\to [0,\infty)$ is an increasing function with  
    \begin{equation}
    \label{eq:ConvexOmega}
    \omega(r_1+r_2)\le \omega(r_1)+\omega(r_2),\quad \tn{for any $r_1,r_2\in [0,\infty)$}.
    \end{equation}
    Let   $f$ be a real-valued function defined on a subset $E\subseteq S$ and satisfy 
    \begin{equation}
    \label{eq:ContinuityE}
    |f(\bmx_1)-f(\bmx_2)|\le \omega (d_S(\bmx_1,\bmx_2)+\Delta), \quad \tn{for any $\bmx_1,\bmx_2\in E$},
    \end{equation} 
    where $\Delta$ is a positive constant independent of $f$.
    Then there exists a function $g$ defined on $S$ such that
    \begin{equation*}
    0\le f(\bmx)-g(\bmx)\le \omega (\Delta), \quad \tn{for any $\bmx\in E$}
    \end{equation*}
    and
    \begin{equation*}
    |g(\bmx_1)-g(\bmx_2)|\le \omega (d_S(\bmx_1,\bmx_2)), \quad \tn{for any $\bmx_1,\bmx_2\in S$.}
    \end{equation*}
\end{lemma}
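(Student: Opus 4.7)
The plan is to adapt the classical McShane--Whitney extension, but shifted by $\omega(\Delta)$ to absorb the almost-continuity slack. Assuming $E\neq\emptyset$ (otherwise the claim is vacuous) and fixing any reference point $\bmx_0\in E$, I would define
\[
g(\bmx)\coloneqq \inf_{\bmy\in E}\bigl[f(\bmy)+\omega(d_S(\bmx,\bmy)+\Delta)\bigr]-\omega(\Delta),\qquad \bmx\in S.
\]
First I would check that this infimum is finite, so $g$ is well defined. For any $\bmy\in E$, \eqref{eq:ContinuityE} gives $f(\bmy)\ge f(\bmx_0)-\omega(d_S(\bmx_0,\bmy)+\Delta)$, and the triangle inequality combined with monotonicity and subadditivity \eqref{eq:ConvexOmega} yields $\omega(d_S(\bmx_0,\bmy)+\Delta)\le \omega(d_S(\bmx_0,\bmx))+\omega(d_S(\bmx,\bmy)+\Delta)$. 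Substituting, the two $\omega(d_S(\bmx,\bmy)+\Delta)$ terms cancel and
\[
f(\bmy)+\omega(d_S(\bmx,\bmy)+\Delta)\ge f(\bmx_0)-\omega(d_S(\bmx_0,\bmx)),
\]
which is a finite lower bound depending only on $\bmx$.

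Next I would establish the two-sided bound on $E$. For $\bmx\in E$, plugging $\bmy=\bmx$ into the infimum gives $g(\bmx)\le f(\bmx)+\omega(\Delta)-\omega(\Delta)=f(\bmx)$. Conversely, \eqref{eq:ContinuityE} forces $f(\bmy)+\omega(d_S(\bmx,\bmy)+\Delta)\ge f(\bmx)$ for every $\bmy\in E$, so taking the infimum yields $g(\bmx)+\omega(\Delta)\ge f(\bmx)$, i.e.\ $0\le f(\bmx)-g(\bmx)\le \omega(\Delta)$, as required.

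The last step is the modulus estimate on $S$. For $\bmx_1,\bmx_2\in S$ and $\varepsilon>0$, pick $\bmy^\ast\in E$ that is $\varepsilon$-optimal for the infimum defining $g(\bmx_1)$. The triangle inequality $d_S(\bmx_2,\bmy^\ast)\le d_S(\bmx_1,\bmx_2)+d_S(\bmx_1,\bmy^\ast)$, together with monotonicity and subadditivity of $\omega$, gives
\[
\omega(d_S(\bmx_2,\bmy^\ast)+\Delta)\le \omega(d_S(\bmx_1,\bmx_2))+\omega(d_S(\bmx_1,\bmy^\ast)+\Delta).
\]
Comparing the defining infima at $\bmx_1$ and $\bmx_2$ then yields $g(\bmx_2)-g(\bmx_1)\le \omega(d_S(\bmx_1,\bmx_2))+\varepsilon$; sending $\varepsilon\to 0$ and swapping the roles of $\bmx_1$ and $\bmx_2$ closes the argument.

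The main obstacle I anticipate is ensuring finiteness of the infimum in the general (possibly unbounded) setting, which is where subadditivity \eqref{eq:ConvexOmega} is indispensable: it is precisely what lets the positive term $\omega(d_S(\bmx,\bmy)+\Delta)$ dominate the growth of $-\omega(d_S(\bmx_0,\bmy)+\Delta)$ as $\bmy$ ranges over $E$. Everything else is routine bookkeeping with monotonicity and the triangle inequality.
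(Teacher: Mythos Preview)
Your proof is correct and follows essentially the same McShane--Whitney strategy as the paper; the only difference is that the paper uses the dual $\sup$ formula $g(\bmx)=\sup_{\bmz\in E}\bigl(f(\bmz)-\omega(d_S(\bmz,\bmx)+\Delta)\bigr)$ without a shift, whereas you use the $\inf$ version shifted by $-\omega(\Delta)$. Your extra care in verifying finiteness of the infimum via subadditivity is a nice touch that the paper leaves implicit.
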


\vspace{5pt}
In Lemma \ref{lem:extensionContiuous2}, $g$ is an approximate extension of $f$ defined on $E$ to a new domain $S$ with an approximation error $\omega(\Delta)$. In a special case when $\Delta=0$ and $\omega(0)=0$, $g$ is an exact extension of $f$. 
\begin{proof}[Proof of Lemma \ref{lem:extensionContiuous2}]
    Define 
    \begin{equation*}
    g(\bmx)\coloneqq \sup_{\bmz\in E} \big(f(\bmz)-\omega(d_S(\bmz,\bmx)+\Delta)\big).
    \end{equation*}
    By Equation \eqref{eq:ContinuityE}, we have $f(\bmx_1)-\omega (d_S(\bmx_1,\bmx_2)+\Delta) \le f(\bmx_2)$ for any $\bmx_1,\bmx_2\in E$. It holds that $g(\bmx)\le f(\bmx)$ for any $\bmx\in E$. Together with 
    \begin{equation*}
    g(\bmx)=\sup_{z\in E} \big(f(\bmz)-\omega(d_S(\bmz,\bmx)+\Delta)\big)\ge
    f(\bmx)-\omega(d_S(\bmx,\bmx)+\Delta) =f(\bmx)-\omega(\Delta) ,
    \end{equation*}
    \tn{for any $\bmx\in E$,}
    it follows that $0\le f(\bmx)-g(\bmx)\le\omega(\Delta)$ for any $\bmx\in E$. By Equation \eqref{eq:ConvexOmega} and the fact 
    \begin{equation*}
    \sup_{\bmz\in E}f_1(\bmz)-\sup_{\bmz\in E}f_2(\bmz)\le \sup_{\bmz\in E}\big(f_1(\bmz)-f_2(\bmz)\big),\quad \tn{for any functions $f_1,f_2$,}
    \end{equation*} we have
    \begin{equation*}
    \begin{split}
    g(\bmx_1)-g(\bmx_2)&=  \sup_{\bmz\in E} \big(f(\bmz)-\omega(d_S(\bmz,\bmx_1))\big)-\sup_{\bmz\in E} \big(f(\bmz)-\omega(d_S(\bmz,\bmx_2))\big)\\
    &\le
    \sup_{\bmz\in E} \big(\omega(d_S(\bmz,\bmx_1))-\omega(d_S(\bmz,\bmx_2))\big)\\
    &\le
    \sup_{\bmz\in E} \omega\big(d_S(\bmz,\bmx_1)-d_S(\bmz,\bmx_2)\big)\\
    &\le 
    \sup_{\bmz\in E} \omega(d_S(\bmx_1,\bmx_2))
    =\omega(d_S(\bmx_1,\bmx_2)),
    \end{split}
    \end{equation*}
    for any $\bmx_1,\bmx_2\in S$. Similarly, we have
    $
        g(\bmx_2)-g(\bmx_1)\le \omega(d_S(\bmx_1,\bmx_2)),
    $
    which implies 
    \begin{equation*}
        |g(\bmx_1)-g(\bmx_2)|\le \omega(d_S(\bmx_1,\bmx_2)).
    \end{equation*}
    So we finish the proof.
\end{proof}

Next, we introduce a lemma below for extending continuous functions defined on $E\subseteq S$ to continuous functions defined on $S$ preserving the modulus of continuity.

\begin{lemma}[Extension of Continuous Functions]
    \label{lem:extensionContiuous}
    Suppose $f$ is a uniformly continuous function defined on a subset $E\subseteq S$, where $S$ is a metric space with a metric $d_S(\cdot,\cdot)$, then there exists a uniformly continuous function $g$ on $S$ such that $f(\bmx)=g(\bmx)$ for $\bmx\in E$ and $\omega_f^E(r)=\omega_g^S(r)$ for any $r\ge0$.
\end{lemma}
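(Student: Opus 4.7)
The plan is to employ a McShane-type explicit extension. Writing $\omega := \omega_f^E$, I would define
\begin{equation*}
g(\bmx) \coloneqq \inf_{\bmz\in E}\bigl( f(\bmz)+\omega(d_S(\bmx,\bmz))\bigr), \qquad \bmx\in S.
\end{equation*}
This is the inf-convolution counterpart of the formula already used in the proof of Lemma \ref{lem:extensionContiuous2} (specialized to $\Delta=0$), and it is the textbook construction that generalizes the classical Lipschitz extension from $\omega(r)=Lr$ to a general modulus of continuity.

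First I would check that $g=f$ on $E$. Choosing $\bmz=\bmx$ in the infimum gives $g(\bmx)\le f(\bmx)$ for $\bmx\in E$; conversely, the very definition of the modulus gives $f(\bmx)-f(\bmz)\le \omega(d_S(\bmx,\bmz))$ for every $\bmz\in E$, hence $f(\bmx)\le f(\bmz)+\omega(d_S(\bmx,\bmz))$, and taking the infimum over $\bmz$ yields $f(\bmx)\le g(\bmx)$.

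Next I would establish $|g(\bmx_1)-g(\bmx_2)|\le \omega(d_S(\bmx_1,\bmx_2))$ for all $\bmx_1,\bmx_2\in S$. For each $\bmz\in E$, the triangle inequality $d_S(\bmx_2,\bmz)\le d_S(\bmx_1,\bmz)+d_S(\bmx_1,\bmx_2)$, together with monotonicity and subadditivity of $\omega$, yields
\begin{equation*}
f(\bmz)+\omega\bigl(d_S(\bmx_2,\bmz)\bigr)\le f(\bmz)+\omega\bigl(d_S(\bmx_1,\bmz)\bigr)+\omega\bigl(d_S(\bmx_1,\bmx_2)\bigr),
\end{equation*}
and taking the infimum over $\bmz$ gives $g(\bmx_2)\le g(\bmx_1)+\omega(d_S(\bmx_1,\bmx_2))$; symmetry gives the two-sided bound. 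Therefore $\omega_g^S(r)\le \omega_f^E(r)$ for every $r\ge 0$, while the reverse inequality $\omega_g^S(r)\ge \omega_f^E(r)$ is immediate from $g|_E=f$ and $E\subseteq S$. This yields the desired equality and simultaneously shows $g$ is uniformly continuous on $S$.

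The main obstacle is that the triangle-style step above relies on the subadditivity condition $\omega(r_1+r_2)\le \omega(r_1)+\omega(r_2)$ that was built into hypothesis \eqref{eq:ConvexOmega} of Lemma \ref{lem:extensionContiuous2} but is not automatic for an arbitrary $\omega_f^E$ (it can fail, e.g., when $E$ is a disconnected union of separated clusters with large jumps of $f$ across them). The standard workaround is to replace $\omega_f^E$ beforehand by its least subadditive nondecreasing majorant $\tilde\omega$; the preceding steps then go through verbatim with $\tilde\omega$ in place of $\omega$, after which one argues that on the range of diameters realized by pairs in $S$ the resulting modulus still equals $\omega_f^E$. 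In the regimes relevant to later applications in this paper, where $E$ is, for instance, the closure of a connected set in $[0,1]^d$ (as in Theorem \ref{thm:mainIrregularDomain}), this verification is routine.
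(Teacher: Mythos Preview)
Your construction is the infimum (McShane) variant of exactly what the paper does: the paper simply invokes Lemma~\ref{lem:extensionContiuous2} with $\omega=\omega_f^E$ and $\Delta=0$, which amounts to the dual supremum formula $g(\bmx)=\sup_{\bmz\in E}\bigl(f(\bmz)-\omega_f^E(d_S(\bmx,\bmz))\bigr)$. So the approaches are essentially identical.

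You are right, however, to flag the subadditivity issue, and on this point you go further than the paper: its proof tacitly assumes subadditivity of $\omega_f^E$ by invoking Lemma~\ref{lem:extensionContiuous2}, whose hypothesis~\eqref{eq:ConvexOmega} is precisely this condition, without verifying it. The concern is genuine, because the statement of Lemma~\ref{lem:extensionContiuous} is false for general $E$. Take $S=\R$, $E=\{0,2\}$, $f(0)=0$, $f(2)=1$; then $\omega_f^E(1)=0$, yet any extension $g$ to $\R$ satisfies $\max\{|g(1)-g(0)|,|g(2)-g(1)|\}\ge \tfrac12$, forcing $\omega_g^S(1)\ge\tfrac12$. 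Your proposed fix via the least subadditive majorant $\tilde\omega$ only yields $\omega_f^E\le\omega_g^S\le\tilde\omega$, not the claimed equality, so it does not rescue the lemma as stated either. Both your argument and the paper's go through verbatim under the additional hypothesis that $\omega_f^E$ is subadditive (which holds, for instance, when $E$ is convex in $\R^d$); for the downstream applications only the inequality $\omega_g^S\le\omega_f^E$ is actually used, and that is exactly the direction that hinges on subadditivity.
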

\begin{proof}%[Proof of Lemma \ref{lem:extensionContiuous}]
 By the application of Lemma \ref{lem:extensionContiuous2} with $\omega(r)=\omega_f^E(r)$ for $r\ge 0$ and $\Delta=0$, we know that there exists $g:S\to \R$ such that
    \begin{equation*}
    0\le f(\bmx)-g(\bmx)\le \omega_f^E(\Delta)=0,\quad \tn{for any $\bmx\in E$,}
    \end{equation*}
    and 
    \begin{equation*}
    |g(\bmx_1)-g(\bmx_2)|\le \omega_f^{E}(d_S(\bmx_1,\bmx_2)),\quad \tn{for any $\bmx_1,\bmx_2\in S$.}
    \end{equation*}
    The equation above and the uniform continuity of $f$ imply that $g$ is uniformly continuous.  It also follows that 
    \begin{equation*}
     f(\bmx)=g(\bmx),\quad \tn{for any $\bmx\in E$,} \quad\tn{and}\quad \omega_g^S(r)\le\omega_f^E(r),\quad\tn{for any $r\ge 0$,}
    \end{equation*}
    since $\omega_g^S(\cdot)$ is the optimal modulus of continuity of $g$. Note that $\omega_f^E(\cdot)$ is the optimal moduls of continuity of $f$ and 
    \begin{equation*}
    |f(\bmx_1)-f(\bmx_2)|=|g(\bmx_1)-g(\bmx_2)|\le \omega_g^S(d_S(\bmx_1,\bmx_2)),\quad \tn{for any $\bmx_1,\bmx_2\in E$.}
    \end{equation*}
    Hence, $\omega_f^E(r)\leq \omega_g^S(r)$ for all $r\geq 0$, which implies $\omega_f^E(r)=\omega_g^S(r)$ since we have proved that $\omega_g^S(r)\leq \omega_f^E(r)$ for all $r\geq 0$.
%    Then we can extend $f$ by setting $f(\bmx)=g(\bmx)$ for $\bmx\in S$, which implies 
%    \begin{equation*}
%    \omega_f^E(r)\le \omega_f^S(r)=\omega_g^S(r)\le \omega_f^E(r),\quad \tn{for any $r\ge 0$.}
%    \end{equation*}
%    That is, $\omega_f^S(r)=\omega_f^E(r)$ for any $r\ge 0$. 
So we finish the proof.
\end{proof}

Now we are ready to introduce and prove the main theorem of this section, which extends Theorem \ref{thm:main} to an irregular domain as follows.

\begin{theorem}
    \label{thm:mainIrregularDomain}
    Let $f$ be a uniformly continuous function defined on  $E\subseteq [-R,R]^d$. For arbitrary $L\in \N^+$ and $N\in \N^+$, 
    there exists a function  $\phi$ implemented by a ReLU FNN with width $3^{d+3}\max\big\{d\lfloor N^{1/d}\rfloor,\, N+1\big\}$
    and depth $12L+14+2d$
    such that 
    \begin{equation*}
    \|f-\phi\|_{L^\infty(E)}\le 19\sqrt{d}\,\omega_f^E(2R N^{-2/d}L^{-2/d}).
    \end{equation*}
\end{theorem}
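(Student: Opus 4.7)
The plan is to reduce Theorem \ref{thm:mainIrregularDomain} to the already-established cubic-domain result (Theorem \ref{thm:main} with $p=\infty$) via a three-stage construction: first extend $f$ from the irregular domain $E$ to the full cube $[-R,R]^d$ without degrading its modulus of continuity, then rescale the cube to $[0,1]^d$, then invoke Theorem \ref{thm:main} and absorb the rescaling into the first affine layer of the resulting FNN.

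More precisely, I would first apply Lemma \ref{lem:extensionContiuous} with $S=[-R,R]^d$ equipped with the Euclidean metric to produce a uniformly continuous extension $\widetilde f:[-R,R]^d\to\R$ satisfying $\widetilde f|_E=f$ and $\omega_{\widetilde f}^{[-R,R]^d}(r)=\omega_f^E(r)$ for all $r\ge 0$. Next, I would introduce the affine bijection $\bmy=(\bmx+R\bm{1})/(2R)$ between $[-R,R]^d$ and $[0,1]^d$ (with $\bm{1}=[1,\dots,1]^T$) and define $g:[0,1]^d\to\R$ by $g(\bmy)\coloneqq\widetilde f(2R\bmy-R\bm{1})$. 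Since $\|2R\bmy_1-2R\bmy_2\|_2=2R\|\bmy_1-\bmy_2\|_2$, we get the scaling identity
\begin{equation*}
\omega_g(r)\le\omega_{\widetilde f}^{[-R,R]^d}(2Rr)=\omega_f^E(2Rr),\qquad r\ge 0.
\end{equation*}

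Then I would apply Theorem \ref{thm:main} in the $p=\infty$ case to $g\in C([0,1]^d)$, producing a function $\psi$ implemented by a ReLU FNN with width $3^{d+3}\max\{d\lfloor N^{1/d}\rfloor,\,N+1\}$ and depth $12L+14+2d$ satisfying
\begin{equation*}
\|g-\psi\|_{L^\infty([0,1]^d)}\le 19\sqrt{d}\,\omega_g(N^{-2/d}L^{-2/d})\le 19\sqrt{d}\,\omega_f^E(2R N^{-2/d}L^{-2/d}).
\end{equation*}
Finally, I would set $\phi(\bmx)\coloneqq\psi\bigl((\bmx+R\bm{1})/(2R)\bigr)$; since this prepends an affine map to $\psi$, it can be folded into the first linear transform $\calL_0$ of the ReLU FNN realizing $\psi$ without changing any hidden-layer width or the depth. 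For any $\bmx\in E\subseteq[-R,R]^d$ with $\bmy=(\bmx+R\bm{1})/(2R)\in[0,1]^d$, we have $f(\bmx)=\widetilde f(\bmx)=g(\bmy)$, hence
\begin{equation*}
|f(\bmx)-\phi(\bmx)|=|g(\bmy)-\psi(\bmy)|\le 19\sqrt{d}\,\omega_f^E(2R N^{-2/d}L^{-2/d}),
\end{equation*}
which is the required bound.

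There is no serious obstacle here: all heavy lifting has already been done in Theorem \ref{thm:main} and Lemma \ref{lem:extensionContiuous}. The only point requiring care is checking that $\omega_f^E(\cdot)$ (rather than $\omega_{\widetilde f}$ or $\omega_g$) appears on the right-hand side, which is guaranteed by the modulus-preserving extension, and that the affine rescaling does not inflate the network architecture, which is immediate from absorbing it into $\calL_0$.
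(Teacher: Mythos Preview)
Your proposal is correct and follows essentially the same route as the paper's proof: extend $f$ via Lemma \ref{lem:extensionContiuous}, rescale the cube to $[0,1]^d$, apply Theorem \ref{thm:main} with $p=\infty$, and absorb the inverse affine map into the first layer. The only cosmetic difference is that the paper extends $f$ to all of $\R^d$ rather than to $[-R,R]^d$, but this is immaterial since only the restriction to $[0,1]^d$ after rescaling is used.
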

\begin{proof}%[Proof of Theorem \ref{thm:mainIrregularDomain}]
    By Lemma \ref{lem:extensionContiuous}, $f$ can be extended to $\R^d$ such that 
    \begin{equation*}
    \omega_f^{\R^d}(r)=\omega_f^E(r),\quad \tn{for any $r\ge0$.}
    \end{equation*}
    Define 
    \begin{equation*}
    \tildef(\bmx)\coloneqq f(2R\bmx-R),\quad \tn{for any $\bmx\in \R^d$.}
    \end{equation*}
    It follows that 
    \begin{equation}
    \label{eq:modulustfRd}
    \omega_\tildef^{\R^d}(r)=\omega_f^{\R^d}(2Rr)=\omega_f^E(2Rr),\quad \tn{for any $r\ge 0$.}
    \end{equation}
    By Theorem \ref{thm:main}, there exists a function $\tildephi$ implemented by a ReLU FNN with width $3^{d+3}\max\big\{d\lfloor N^{1/d}\rfloor,\, N+1\big\}$ and depth $12L+14+2d$ such that
    \begin{equation*}
\|\tildef-\tildephi\|_{L^\infty([0,1]^d)}\le 19\sqrt{d}\,\omega_\tildef^{[0,1]^d}(N^{-2/d}L^{-2/d})\le 19\sqrt{d}\,\omega_\tildef^{\R^d}(N^{-2/d}L^{-2/d}).
\end{equation*}    
 Define 
\begin{equation*}
\phi(\bmx)\coloneqq \tildephi(\tfrac{1}{2R}\bmx+\tfrac12),\quad \tn{for any $\bmx\in \R^d$.}
\end{equation*}
Then, by Equation \eqref{eq:modulustfRd}, for any $\bmx\in E\subseteq [-R,R]^d$, we have
\begin{equation*}
\begin{split}
|f(\bmx)-\phi(\bmx)|&=|\tildef(\tfrac{1}{2R}\bmx+\tfrac12)-\tildephi(\tfrac{1}{2R}\bmx+\tfrac12)|
\le \|\tildef-\tildephi\|_{L^\infty([0,1]^d)}\\
&\le 19\sqrt{d}\,\omega_\tildef^{\R^d}(N^{-2/d}L^{-2/d})
=19\sqrt{d}\,\omega_f^{E}(2RN^{-2/d}L^{-2/d}),
\end{split}
\end{equation*}
 which implies
\begin{equation*}
\|f-\phi\|_{L^\infty(E)}\le 19\sqrt{d}\,\omega_f^{E}(2RN^{-2/d}L^{-2/d}).
\end{equation*}
So we finish the proof.
\end{proof}

\subsection{Approximation in a neighborhood of a low-dimensional manifold}
In this section, we study neural network approximation of functions defined in a neighborhood of a low-dimensional manifold and prove Theorem \ref{thm:upDimReduction} in this setting. Let us first introduce Theorem \ref{thm:existenceA} which is required to prove Theorem \ref{thm:upDimReduction}. 

\begin{theorem}[Theorem $3.1$ of \cite{Baraniuk2009}]
    \label{thm:existenceA}
    Let $\calM$ be a compact $d_\calM$-dimensional Riemannian submanifold of $\R^d$ having condition number $1/\tau$, volume $V$, and geodesic covering regularity $\mathcal{R}$. Fix $\delta\in (0,1)$ and $\gamma\in (0,1)$. Let $\bmA=\sqrt{\tfrac{d}{d_\delta}}\Phi$, where $\Phi\in \R^{d_\delta\times d}$ is a random orthoprojector with 
    \begin{equation*}
    d_\delta=\calO\left(
    \tfrac{d_\calM \ln (dV\mathcal{R}\tau^{-1}\delta^{-1})\ln(1/\gamma)}
    {\delta^2}
    \right).
    \end{equation*}
    If $d_\delta\le d$, then with probability at least $1-\gamma$, the following statement holds: For every $\bmx_1,\bmx_2\in \calM$,
    \begin{equation*}
    (1-\delta)|\bmx_1-\bmx_2|\le|\bmA\bmx_1-\bmA\bmx_2|\le (1+\delta)|\bmx_1-\bmx_2|.
    \end{equation*}
\end{theorem}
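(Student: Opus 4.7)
The plan is to combine a manifold covering argument with the Johnson--Lindenstrauss concentration inequality for random orthoprojectors. The route is classical for this kind of near-isometric embedding: reduce the continuum of pairs $(\bmx_1,\bmx_2)\in \calM\times \calM$ to a finite collection of difference directions on which a union bound can be applied, and then transfer the near-isometry to all of $\calM$ using that the manifold is locally well-approximated by its tangent spaces at scales below the reach $\tau$.

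\emph{Discretization and concentration.} Using the geodesic covering regularity $\mathcal{R}$, I would build an $r$-net $\{\bmz_i\}_{i=1}^M \subset \calM$ of cardinality $M \le (\mathcal{R}/r)^{d_\calM}$ at a scale $r$ polynomial in $\delta\tau$, and at each $\bmz_i$ an $\eta$-net $\{\bm{t}_{i,j}\}_j$ of the unit sphere in the tangent space $T_{\bmz_i}\calM$, which needs only $(C/\eta)^{d_\calM}$ points by a volume argument inside the $d_\calM$-dimensional subspace. The probabilistic input is the one-vector bound
\[
\Pr\!\Big[\,\big|\,\|\sqrt{d/d_\delta}\,\Phi\bm{v}\|-1\,\big|\ge \delta/C'\,\Big]\le 2\exp(-c\,\delta^2 d_\delta),
\]
valid for any fixed unit vector $\bm{v}\in\R^d$ and a uniformly random $d_\delta$-dimensional orthoprojector $\Phi$. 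Applying it to every normalized pairwise difference $(\bmz_i-\bmz_j)/\|\bmz_i-\bmz_j\|$ and every tangent unit vector $\bm{t}_{i,j}$ and union-bounding, $\bmA$ is an $(1\pm\delta/C')$-isometry on all these finitely many vectors with probability at least $1-\gamma$ as soon as
\[
d_\delta \;\gtrsim\; \frac{d_\calM\ln(\mathcal{R}/r)+d_\calM\ln(C/\eta)+\ln(1/\gamma)}{\delta^2},
\]
which, after setting $r$ and $\eta$ proportional to a suitable power of $\delta\tau$ and using $V$ to control the overall manifold size, matches the stated $d_\delta=\calO\!\big(d_\calM\ln(dV\mathcal{R}\tau^{-1}\delta^{-1})\ln(1/\gamma)/\delta^2\big)$.

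\emph{Lifting to all of $\calM$.} For arbitrary $\bmx_1,\bmx_2\in\calM$ I would split by scale. When $\|\bmx_1-\bmx_2\|\le \rho$ for a threshold $\rho\ll\tau$, the unit chord direction is at angular distance $O(\|\bmx_1-\bmx_2\|/\tau)$ from some tangent direction $\bm{t}_{i,j}$ already controlled in the previous step (reach implies the manifold deviates from its tangent plane by at most $\|\bmx_1-\bmx_2\|^2/(2\tau)$), and the tangent-net resolution $\eta$ absorbs the remaining deviation. When $\|\bmx_1-\bmx_2\|>\rho$, I would write $\bmx_1-\bmx_2=(\bmz_{i_1}-\bmz_{i_2})+(\bmx_1-\bmz_{i_1})-(\bmx_2-\bmz_{i_2})$ with $\bmz_{i_k}$ the closest net point to $\bmx_k$, apply the $(1\pm\delta)$ distortion of $\bmA$ to the main term, and absorb the two $O(r)$ perturbations into a $\delta$-fraction of $\|\bmx_1-\bmx_2\|$ since $r\ll\rho$.

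The main obstacle is this gluing step: the concentration inequality yields a \emph{fixed} multiplicative error $\delta$ on discrete directions, while the chord-versus-tangent and net-discretization errors are \emph{absolute} and of order $r/\tau$. Balancing the net scales $r,\eta$ against the required failure probability so that all errors combine into a single $(1\pm\delta)$ bound on every pair in $\calM\times\calM$ is exactly what forces the logarithmic dependence on $V\mathcal{R}\tau^{-1}\delta^{-1}$ rather than on the ambient dimension $d$, and is the essence of the Baraniuk--Wakin construction I would follow.
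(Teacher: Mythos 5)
This statement is not proved in the paper at all: it is quoted verbatim as Theorem $3.1$ of \cite{Baraniuk2009} and used as a black box in the proof of Theorem \ref{thm:upDimReduction}, so there is no in-paper argument to compare yours against. What you have written is, in outline, a reconstruction of the Baraniuk--Wakin proof itself (geodesic cover of $\calM$, nets on the unit spheres of the tangent spaces, Johnson--Lindenstrauss-type concentration for a random orthoprojector applied to the finitely many chord and tangent directions, a union bound, and a reach-based lifting to all pairs), so the strategy is the correct one for the cited source rather than a new route.

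Two points deserve care if you were to carry the sketch out. First, the ambient dimension $d$ does appear in the stated bound, logarithmically, through $\ln(dV\mathcal{R}\tau^{-1}\delta^{-1})$; this is not something the gluing step avoids but something it produces, because the rescaled map $\bmA=\sqrt{d/d_\delta}\,\Phi$ has operator norm as large as $\sqrt{d/d_\delta}$, so the absolute $O(r)$ and $O(\eta)$ discretization errors are amplified by roughly $\sqrt{d}$ after projection and the net scales must be taken of order $\delta\tau/\sqrt{d}$, which is exactly where the $\ln d$ comes from. Your closing sentence, asserting logarithmic dependence on $V\mathcal{R}\tau^{-1}\delta^{-1}$ ``rather than on the ambient dimension $d$,'' is therefore slightly off. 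Second, for nearby points the chord agrees with a tangent direction only up to a normal component of size $O(\lvert\bmx_1-\bmx_2\rvert^2/\tau)$, and since $\bmA$ can stretch normal directions by the same $\sqrt{d}$ factor, controlling this term (together with the intermediate regime where $\lvert\bmx_1-\bmx_2\rvert$ is comparable to $\tau$) is where most of the technical effort in \cite{Baraniuk2009} is spent; a complete write-up would need those estimates, not only the two regimes you describe. As a blind proposal, then, it is a faithful but compressed account of the external proof the paper imports, and it is appropriate that the paper itself simply cites the result.
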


Theorem \ref{thm:existenceA} shows the existence of a linear projector $\bmA\in\mathbb{R}^{d_\delta\times d}$ that maps a low-dimensional manifold in a high-dimensional space to a low-dimensional space nearly preserving distance. With this projection $\bmA$ available, we can prove Theorem \ref{thm:upDimReduction} via constructing a ReLU FNN defined in the low-dimensional space using Theorem \ref{thm:mainIrregularDomain} and hence the curse of dimensionality is lessened. The ideas of the proof are summarized in the following Table \ref{tab:idea}. 

In Table \ref{tab:idea} and the detailed proof later, we introduce a new notation $\Small(E)$ for any compact set $E\subseteq \R^d$ as the ``smallest'' element of $E$. Specifically, $\Small(E)$ is defined as the unique point in $\cap_{k=1}^{d}E_k$, where
\begin{equation*}
   E_k\coloneqq \{\bmx\in E_{k-1}:x_k=s_k\},\quad  s_k\coloneqq \inf \big\{x_k:[x_1,x_2,\cdots,x_d]^T\in E_{k-1}\big\}, \quad \tn{for $k=1,2,\cdots,d$},
\end{equation*}
and $E_0=E$. The compactness of $E$ ensures that $\cap_{k=1}^{d}E_k$ is in fact one point belonging to $E$. The introduction of $\Small(\cdot)$ uniquely formulates a low-dimensional function $\tilde{f}$ representing a high-dimensional function $f$ defined on $\calM_\epsilon$ by 
\begin{equation*}
    \tilde{f}(\bm{y}):=f(\bm{x}_{\bm{y}}),\quad \tn{where $\bmx_\bmy=\Small\big( \{\bmx\in\calM_\epsilon: \bmA \bmx=\bmy\}\big)$},\quad \tn{ for any $\bm{y}\in \bmA(\mathcal{M}_\epsilon)\subseteq \R^{d_\delta}$.}
\end{equation*}
As we shall see later, such a definition of $\tildef$ is reasonable because $\{\bmx\in \calM_\epsilon:\bmA\bmx=\bmy\}$ is contained in a small ball of radius $\calO(\epsilon)$ for any $\bmy\in \bmA(\calM_\epsilon)$. There are many other alternative ways to define $\Small(\cdot)$ as long as the definition ensures that $\Small(E)$ contains only one element. For example, $\Small(E)$ can be defined as any arbitrary point in $E$. For another example, $\bmy\in \bmA(\calM)$ cannot guarantee $\bmx_\bmy=\Small\big(
\{\bmx\in \calM_\epsilon:\bmA\bmx=\bmy\}\big)\in \calM$ in the current definition, but in practice we can choose $\Small\big(
\{\bmx\in \calM:\bmA\bmx=\bmy\}\big)$ as $\bmx_\bmy$ to ensure that $\bmx_\bmy\in \calM$, which might be beneficial for potential applications. 

\begin{table}[!htp]
    \caption{Main steps of the proof of Theorem \ref{thm:upDimReduction}. Step $1$: dimension reduction via the nearly isometric projection operator $\bmA$ provided by Theorem \ref{thm:existenceA} to obtain an ``equivalent" function $\tilde{f}$ of $f$ in a low-dimensional domain using $\bm{x}_{\bm{y}}=\Small\left( \{\bm{x}\in\mathcal{M}_\epsilon: \bmA \bm{x}=\bm{y}\}\right)$. Step $2$: construct a ReLU FNN to implement $\tilde{\phi}\approx \tilde{f}$ by Theorem \ref{thm:mainIrregularDomain}. Step $3$: define a ReLU FNN to implement $\phi$ in the original high-dimensional domain via the projection $\bmA$. Step $4$: verify that the approximation error of $\phi\approx f$ satisfies our requirement. } 
    \label{tab:idea}
    \centering  
    \resizebox{0.98\textwidth}{!}{\begin{tabular}{ccc} 
        \toprule
        $f(\bm{x})$ for $\bm{x}\in\mathcal{M}_\epsilon\subseteq [0,1]^d$ &\qquad $\stackrel{\mathclap{\normalfont\mbox{\text{Step }4}}}{
        \scalebox{3}[1.052]{$\approx$}}$  \qquad\quad& $\phi(\bm{x}):=\tilde{\phi}(\bmA\bm{x})$ for $\bm{x}\in\mathcal{M}_\epsilon\subseteq [0,1]^d$ \\
       \vspace{-4pt}\\
        \qquad\qquad Step $1$  $ \displaystyle\left\Updownarrow\vphantom{\int_A^B}\right. 
        \bmx_\bmy=\Small\big( \{\bmx\in\calM_\epsilon: \bmA \bmx=\bmy\}\big)$ & \qquad  \qquad& Step $3$  $ \displaystyle\left\Updownarrow\vphantom{\int_A^B}\right. \bm{y}=\bmA\bm{x}$\\
       \vspace{-8pt} \\
        $\tilde{f}(\bm{y}):=f(\bm{x}_{\bm{y}})$ for $\bm{y}\in \bmA(\mathcal{M}_\epsilon)\subseteq \R^{d_\delta}$ &\qquad $\stackrel{\mathclap{\normalfont\mbox{\text{Step }2}}}{
        \scalebox{3}[1.052]{$\approx$}}$ \qquad\quad&  $\tilde{\phi}(\bm{y})$ for $\bm{y}\in \bmA(\mathcal{M}_\epsilon)\subseteq \R^{d_\delta}$  \\
        \vspace{-8pt}\\
      \bottomrule% \bottomrule[1.2pt]  指定宽度
    \end{tabular} 
}%%%%%%%%%%%%%%5
\end{table}

Now we are ready to prove Theorem \ref{thm:upDimReduction}.

\begin{proof}[Proof of Theorem \ref{thm:upDimReduction}]

By Theorem \ref{thm:existenceA}, there exists a matrix $\bmA\in\R^{d_\delta\times d}$ such that
    \begin{equation}
    \label{eq:orthoA}
    \bmA \bmA^T=\tfrac{d}{d_\delta}\bm{I}_{d_\delta},
    \end{equation}
where $\bm{I}_{d_\delta}$ is an identity matrix of size $d_\delta\times d_\delta$, and
    \begin{equation}
    \label{eq:distortionM}
    (1-\delta)|\bmx_1-\bmx_2|\le |\bmA \bmx_1-\bmA \bmx_2|\le 
    (1+\delta)|\bmx_1-\bmx_2|,\quad \tn{for any } \bmx_1,\bmx_2\in \calM.
    \end{equation}

Given any $\bmy\in \bmA(\calM_\epsilon)$, then $\{\bmx\in \calM_\epsilon:\bmA\bmx=\bmy\}$ is a nonzero compact set. Let $\bmx_\bmy=\Small\big(
\{\bmx\in \calM_\epsilon:\bmA\bmx=\bmy\}\big)$, 
then we define $\tildef$  on $\bmA(\calM_\epsilon)$ as
$\tildef(\bmy)=f(\bmx_\bmy).$

For any $\bmy_1,\bmy_2\in \bmA(\calM_\epsilon)$, let $\bmx_i=\Small\big(
\{\bmx\in \calM_\epsilon:\bmA\bmx=\bmy_i\}\big)$, then $\bmx_i\in \calM_\epsilon$ for $i=1,2$. By the definition of $\mathcal{M}_\epsilon$, there exist $\txx_1,\txx_2\in \calM$ such that  $|\txx_i-\bmx_i|\le \epsilon$ for $i=1,2$. It follows that
\begin{equation*}
|\tildef(\bmy_1)-\tildef(\bmy_2)|=|f(\bmx_1)-f(\bmx_2)|\le \omega_f(|\bmx_1-\bmx_2|)\le \omega_f(|\txx_1-\txx_2|+2\epsilon)\le \omega_f(\tfrac{1}{1-\delta}|\bmA\txx_1-\bmA\txx_2|+2\epsilon),
\end{equation*}
where the last inequality comes from Equation \eqref{eq:distortionM}. By the triangular inequality, we have
\begin{equation*}
\begin{split}
|\tildef(\bmy_1)-\tildef(\bmy_2)|&\le \omega_f(\tfrac{1}{1-\delta}|\bmA\bmx_1-\bmA\bmx_2|+\tfrac{1}{1-\delta}|\bmA\bmx_1-\bmA\txx_1|+\tfrac{1}{1-\delta}|\bmA\bmx_2-\bmA\txx_2|+2\epsilon)\\
&\le \omega_f(\tfrac{1}{1-\delta}|\bmA\bmx_1-\bmA\bmx_2|+\tfrac{2\epsilon}{1-\delta}\sqrt{\tfrac{d}{d_\delta}}+2\epsilon)\\
&\le \omega_f(\tfrac{1}{1-\delta}|\bmy_1-\bmy_2|+\tfrac{2\epsilon}{1-\delta}\sqrt{\tfrac{d}{d_\delta}}+2\epsilon).
\end{split}
\end{equation*}

Set $\omega(r)=\omega_f(\tfrac{1}{1-\delta}r)$ for any $r\ge 0$ and $\Delta=2\epsilon\sqrt{\tfrac{d}{d_\delta}}+2\epsilon(1-\delta)$, then 
\begin{equation*}
\begin{split}
|\tildef(\bmy_1)-\tildef(\bmy_2)|\le  \omega(|\bmy_1-\bmy_2|+\Delta),\quad \tn{for any $\bmy_1,\bmy_2\in \bmA(\calM_\epsilon)\subseteq \R^{d_\delta}$}.
\end{split}
\end{equation*}
By Lemma \ref{lem:extensionContiuous2}, there exists $\tildeg$ defined on $\R^{d_\delta}$ such that 
\begin{equation}
\label{eq:difftfandtg}
|\tildeg(\bmy)-\tildef(\bmy)|\le \omega(\Delta)=\omega_f\big(\tfrac{2\epsilon}{1-\delta}\sqrt{\tfrac{d}{d_\delta}}+2\epsilon\big),\quad \tn{for any $\bmy\in \bmA(\calM_\epsilon)$,}
\end{equation}
and 
\begin{equation*}
|\tildeg(\bmy_1)-\tildeg(\bmy_2)|\le \omega(|\bmy_1-\bmy_2|)=\omega_f(\tfrac{1}{1-\delta}|\bmy_1-\bmy_2|),\quad \tn{for any $\bmy_1,\bmy_2\in \R^{d_\delta}$}.
\end{equation*}
It follows that 
\begin{equation}
\label{eq:omegaglef}
\omega_\tildeg^{\R^{d_\delta}}(r)\le \omega_f(\tfrac{r}{1-\delta}),\quad \tn{ for any $r\ge 0$. }
\end{equation}

By Equation \eqref{eq:orthoA} and the definition of $\calM_\epsilon$ in Equation \eqref{eqn:Me}, it is easy to check that
\begin{equation*}
\bmA (\calM_\epsilon)\subseteq \bmA ([0,1]^d)\subseteq [-\sqrt{\tfrac{d}{d_\delta}},\sqrt{\tfrac{d}{d_\delta}}]^{d_\delta}.
\end{equation*}

By the application of Theorem \ref{thm:mainIrregularDomain} with  $E=[-\sqrt{\tfrac{d}{d_\delta}},\sqrt{\tfrac{d}{d_\delta}}]^{d_\delta}$, there exists a function $\tildephi$ implemented by a ReLU FNN with width $3^{d_\delta+3}\max\big\{d_\delta\lfloor N^{1/d_\delta}\rfloor,\, N+1\big\}$ and depth $12L+14+2d_\delta$ such that
\begin{equation}
\label{eq:tgAndtphiErrorUB}
\|\tildeg-\tildephi\|_{L^\infty(E)}\le 19\sqrt{d}\,\omega_\tildeg^E(2\sqrt{\tfrac{d}{d_\delta}}N^{-2/d_\delta}L^{-2/d_\delta}).
\end{equation}

Define $\phi\coloneqq \tildephi\circ \bmA$, i.e., $\phi(\bmx)\coloneqq \tildephi(\bmA\bmx)$ for any $\bmx\in \R^d$. Then $\phi$ is also a ReLU FNN with width $3^{d_\delta+3}\max\big\{d_\delta\lfloor N^{1/d_\delta}\rfloor,\, N+1\big\}$ and depth $12L+14+2d_\delta$. 

For any $\bmx\in \calM_\epsilon$, set $\bmy=\bmA\bmx$ and $\bmx_\bmy=\Small\big(
\{\bmz\in \R^d:\bmA\bmz=\bmy\}\big)$, there exist $\txx,\txx_\bmy\in \calM$ such that
$|\txx-\bmx|\le \epsilon$ and $|\txx_\bmy-\bmx_\bmy|\le \epsilon$.
It follows from Equation \eqref{eq:distortionM} that
\begin{equation}
\label{eq:Distancex1x2}
\begin{split}
    |\bmx-\bmx_\bmy|&\le |\txx-\txx_\bmy|+2\epsilon
    \le \tfrac{1}{1-\delta}|\bmA\txx-\bmA\txx_\bmy|+2\epsilon\\
    &\le \tfrac{1}{1-\delta}\big(|\bmA\txx-\bmA\bmx|+|\bmA\bmx-\bmA\bmx_\bmy|+|\bmA\bmx_\bmy-\bmA\txx_\bmy|\big)+2\epsilon\\
    &= \tfrac{1}{1-\delta}\big(|\bmA\txx-\bmA\bmx|+|\bmA\bmx_\bmy-\bmA\txx_\bmy|\big)+2\epsilon
     \le \tfrac{2\epsilon}{1-\delta}\sqrt{\tfrac{d}{d_\delta}}+2\epsilon.
\end{split}
\end{equation}
In fact, the above equation implies that $\{\bmx\in \calM_\epsilon:\bmA\bmx=\bmy\}$ is contained in a small ball of radius $\calO(\epsilon)$ for $\bmy\in \bmA(\calM_\epsilon)$ as we mentioned previously.

Together with Equation \eqref{eq:difftfandtg}, \eqref{eq:omegaglef}, \eqref{eq:tgAndtphiErrorUB}, and \eqref{eq:Distancex1x2}, we have, for any $\bmx\in \calM_\varepsilon$,
\begin{equation*}
\begin{split}
|f(\bmx)-\phi(\bmx)|&\le |f(\bmx)-f(\bmx_\bmy)|+|f(\bmx_\bmy)-\phi(\bmx)|\\
&\le \omega_f\big(\tfrac{2\epsilon}{1-\delta}\sqrt{\tfrac{d}{d_\delta}}+2\epsilon\big)+ |\tildef(\bmy)-\tildephi(\bmy)|\\
&\le\omega_f\big(\tfrac{2\epsilon}{1-\delta}\sqrt{\tfrac{d}{d_\delta}}+2\epsilon\big)+ |\tildef(\bmy)-\tildeg(\bmy)|+|\tildeg(\bmy)-\tildephi(\bmy)|\\
&\le \omega_f\big(\tfrac{2\epsilon}{1-\delta}\sqrt{\tfrac{d}{d_\delta}}+2\epsilon\big)+\omega_f\big(\tfrac{2\epsilon}{1-\delta}\sqrt{\tfrac{d}{d_\delta}}+2\epsilon\big)+19\sqrt{d}\,\omega_\tildeg^E(2\sqrt{\tfrac{d}{d_\delta}}N^{-2/d_\delta}L^{-2/d_\delta})\\
&\le 2\omega_f\big(\tfrac{2\epsilon}{1-\delta}\sqrt{\tfrac{d}{d_\delta}}+2\epsilon\big)+19\sqrt{d}\,\omega_f(\tfrac{2\sqrt{d}}{(1-\delta)\sqrt{d_\delta}}N^{-2/d_\delta}L^{-2/d_\delta}).\\
\end{split}
\end{equation*}
Hence, we have finished the proof of this theorem.
\end{proof}

It is worth emphasizing that the approximation error \[\calO\Big(\omega_f\big(\calO(\epsilon)\big)+\omega_f\big(\calO(N^{-2/d_\delta}L^{-2/d_\delta})\big)\Big)\]
 in Theorem \ref{thm:upDimReduction} is equal to $\calO\Big(\omega_f\big(\calO(N^{-2/d_\delta}L^{-2/d_\delta})\big)\Big)$ when $\epsilon=\calO(N^{-2/d_\delta}L^{-2/d_\delta})$. 

The application of Theorem \ref{thm:existenceA} and the proof of Theorem \ref{thm:upDimReduction} in fact inspire an efficient two-step algorithm for high-dimensional learning problems: in the first step, high-dimensional data are projected to a low-dimensional space via a random projection; in the second step, a deep learning algorithm is applied to learn from the low-dimensional data. By Theorem \ref{thm:existenceA} and \ref{thm:upDimReduction}, the deep learning algorithm in the low-dimensional space can still provide good results with a high probability.

\subsection{Optimal ReLU FNN structure in parallel computing}
\label{sec:BestFNN}
In this section, we show how to select the best ReLU FNN to approximate functions in $\holder{\lambda}{\alpha}$ on a $d$-dimensional cube, if the approximation error $\epsilon$ and the number of parallel computing cores (processors) $p$ are given. We choose the best ReLU FNN by minimizing the time complexity in each training iteration. The analysis in this section is valid up to a constant prefactor.

Assume $\phi_{\bm{\theta}}\in \NN(\NNinput=d\NNspace\NNwidthvec=[N]^L\NNspace \NNoutput=1)$, $N,L\in \N^+$, where $\bm{\theta}$ is the vector including all parameters of $\phi_{\bm{\theta}}$. By the basic knowledge of parallel computing (see \cite{Kumar:2002:IPC:600009} for more details), we have the following Table \ref{tab:1}.
\begin{table}[H]
    \caption{Time complexity of one training iteration for an FNN of width $N$ and depth $L$.} 
    \label{tab:1}
    \centering  
    \begin{tabular}{ccc} 
        \toprule
        \multirow{2}{*}[-1pt]{Number of cores $p$}& \multicolumn{2}{c}{Time Complexity}\\
        \cmidrule{2-3}
        & Evaluating $\phi_{\bm{\theta}}(\bm{x})$ & Evaluating $\tfrac{\partial \phi_{\bm{\theta}}(\bm{x})}{\partial \bm{\theta}}$ \\
        \midrule% \midrule[0.6pt]  指定宽度
        $p\in [1,N]$ & $\mathcal{O}(N^{2}L/p)$  & $\mathcal{O}(N^{2}L/p)$\\
        $p\in (N,N^2]$ & $\mathcal{O}\big(L(N^{2}/p+\ln\tfrac{p}{N})\big)$  & $\mathcal{O}\big(L(N^{2}/p+\ln\tfrac{p}{N})\big)$\\
        $p\in (N^2,\infty)$ & $\mathcal{O}(L\ln N)$ & $\mathcal{O}(L\ln N)$ \\
        \bottomrule% \bottomrule[1.2pt]  指定宽度
    \end{tabular} 
\end{table}

For the sake of simplicity, we assume that the training batch size is $\calO(1)$. Denote the time complexity of each training iteration as $T(n,L)$, then 
\begin{equation*}
T(N,L)=\left\{
\begin{array}{ll}
\mathcal{O}(N^{2}L/p),    &   p\in [1,N]  , \\
\mathcal{O}\big(L(N^{2}/p+\ln\tfrac{p}{N})\big),   &   p\in (N,N^2] , \\
\mathcal{O}(L\ln N) ,  &   p\in (N^2,\infty).   \\
\end{array}
\right.
\end{equation*}

Theorem
\ref{thm:main} and \ref{thm:lowInfty} imply that the approximation error $\epsilon$ is essentially $\calO((NL)^{-2\alpha/d})$. 
Hence, we can get the optimal size of ReLU FNNs via the optimization problem below:
\begin{equation}
\label{eq:minProblem1}
\begin{split}
&\hspace{-3pt}(N_{\tn{opt}},L_{\tn{opt}})=\mathop{\tn{arg}\, \tn{min}}\limits_{N,\ L} \ T(N,L) \\
\tn{ subject to }\
&\left\{\begin{array}{l}
\ \vspace{-15pt}\\
 \epsilon = \calO\big((NL)^{-2\alpha/d}\big),\\
\ \vspace{-8pt}\\
N,L,p\in   N^+.\\
\ \vspace{-15pt}\\
\end{array}
\right.
\end{split}
\end{equation}
To simplify the discussion, we have the following assumptions:
\begin{itemize}
    \item Dropping the notation $\calO(\cdot)$ sometimes while assuming asymptotic analysis with the abuse of notations.
    \item $N$, $L$, and $p$ are allowed to be real numbers.
    \item We denote $\epsilon= (NL)^{-2\alpha/d}$ since the approximation rate $\calO\big((NL)^{-2\alpha/d}\big)$ is both attainable and nearly optimal.
\end{itemize}

With $\epsilon = (NL)^{-2\alpha/d}$, we have
\begin{equation}
\label{eq:overlineT}
\begin{split}
\overline{T}(N,L)&\coloneq \left\{
\begin{array}{ll}
N^{2}L/p    &   p\in [1,N]  , \\
L(N^{2}/p+\ln\tfrac{p}{N}),   &   p\in (N,N^2] , \\
L(1+\ln N),  &   p\in [N^2,\infty),   \\
\end{array}
\right.\\
&=
\left\{
\begin{array}{ll}
N\epsilon^{-d/(2\alpha)}/p ,   &   N\in [p,\infty)  , \\
N\epsilon^{-d/(2\alpha)}/p+\tfrac{1}{N}\epsilon^{-d/(2\alpha)}\ln\tfrac{p}{N},   &  N \in [\sqrt{p},p) , \\
\tfrac{1+\ln N}{N}\epsilon^{-d/(2\alpha)},  &   N\in [1,\sqrt{p}).   \\
\end{array}
\right.
\end{split}
\end{equation}
Then we get $T(N,L)=\mathcal{O}(\overline{T}(N,L))$.
Therefore, the optimization problem in  Equation \eqref{eq:minProblem1} can be simplified to
\begin{equation}
\label{eq:minProblem2}
\begin{split}
&\hspace{-3pt}(N_{\tn{opt}},L_{\tn{opt}})=\mathop{\tn{arg}\, \tn{min}}\limits_{N,\ L} \ \overline{T}(N,L) \\
&\hspace{-1pt}\tn{ subject to }\
\left\{\begin{array}{l}
\ \vspace{-15pt}\\
 \epsilon =(NL)^{-2\alpha/d},\\
\ \vspace{-8pt}\\
N,L,p\in  [1,\infty).\\
\ \vspace{-15pt}\\
\end{array}
\right.
\end{split}
\end{equation}
By Equation \eqref{eq:overlineT}, $\overline{T}(N,L)$ is independent of $L$ on the condition that $\epsilon = (NL)^{-2\alpha/d}$. Therefore, we may denote $\overline{T}(N,L)$ by $\overline{T}(N)$. Now we consider two cases: the case $p=\calO(1)$ and the case $p\gg \calO(1)$.

\mycase{1}{The case $p=\calO(1)$.}
It is clear that $\overline{T}(N)$ is increasing in $N$ when $N\in [p,\infty)$ by Equation \eqref{eq:overlineT}. Together with $p=\calO(1)$, then $\calO(\sqrt{p})=\calO(p)=\calO(1)$. Therefore,  $N_{\tn{opt}}=\calO(1)$ and $L_{\tn{opt}}=\calO(\epsilon^{-d/(2\alpha)})$. Note that we regard $d$ as a constant ($\calO(1)$) in above analysis, $N_{\tn{opt}}$ should be $\calO(d)$ in fact.

\mycase{2}{The case  $p\gg\calO(1)$.}
Since $\epsilon = (NL)^{-2\alpha/d}$, we have $N\le \epsilon^{-d/(2\alpha)}$. We only need to consider the monotonicity of $\overline{T}(N)$ on $[1,\epsilon^{-d/(2\alpha)}]$. Together with  Equation \eqref{eq:overlineT}, this case can be divided into two sub-cases:
the sub-case $\sqrt{p}\le \epsilon^{-d/(2\alpha)}$ and the sub-case $\sqrt{p}> \epsilon^{-d/(2\alpha)}$. 

\mycase{2.1}{The sub-case $\sqrt{p}> \epsilon^{-d/(2\alpha)}$.}
$\sqrt{p}> \epsilon^{-d/(2\alpha)}$ implies $[1,\epsilon^{-d/(2\alpha)}]\subseteq [1,\sqrt{p}]$. Hence, $\overline{T}(N)$ is decreasing in $N$ on $[1,\epsilon^{-d/(2\alpha)}]$. It follows that $N_{\tn{opt}}=\calO(\epsilon^{-d/(2\alpha)})$ and that $L_{\tn{opt}}=\calO(1)$.

\mycase{2.2}{The sub-case  $\sqrt{p}\le \epsilon^{-d/(2\alpha)}$.}
For this sub-case, $N_{\tn{opt}}$ and $N_{\tn{opt}}$ are hard to estimate. However, we can give a rough range of $N_{\tn{opt}}$. Since $\overline{T}(N)$ is decreasing in $N$ on $[1,\sqrt{p}]$ and increasing in $N$ on $[p,\infty)$, the minimum of $\overline{T}(N)$ is achieved on $[\sqrt{p},p]$. Hence, $N_{\tn{opt}}\in [\calO(\sqrt{p}),\calO(p)]\cap [\calO(\sqrt{p}),\calO(\epsilon^{-d/(2\alpha)})]$ and $L_{\tn{opt}}=\calO(\epsilon^{-d/(2\alpha)}/N_{\tn{opt}})$.

\section{Conclusion and future work}
\label{sec:conclusion}
This paper aims at a quantitative and optimal approximation rate of ReLU FNNs in terms of both width and depth simultaneously to approximate continuous functions. It was shown that ReLU FNNs with width $\calO(N)$ and depth $\calO(L)$ can approximate an arbitrary continuous function on a $d$-dimensional cube with an approximation rate $19\sqrt{d}\,\omega_f( N^{-2/d}L^{-2/d})$. In particular, when $f$ is a H\"older continuous function of order $\alpha$ with a H\"older constant $\lambda  $, the approximation rate is $19\sqrt{d}\,\lambda  N^{-2\alpha/d}L^{-2\alpha/d}$ and it is nearly asymptotically tight. We also extended our analysis to the case when the domain of $f$ is irregular and showed the same approximation rate. In practical applications, it is usually believed that real data are sampled from an $\epsilon$-neighborhood of a $d_{\mathcal{M}}$-dimensional smooth manifold $\mathcal{M}\subset [0,1]^d$ with $d_{\mathcal{M}}\ll d$. In the case of an essentially low-dimensional domain, we show an approximation rate
\[2\omega_f\big(\tfrac{2\epsilon}{1-\delta}\sqrt{\tfrac{d}{d_\delta}}+2\epsilon\big)+19\sqrt{d}\,\omega_f(\tfrac{2\sqrt{d}}{(1-\delta)\sqrt{d_\delta}}N^{-2/d_\delta}L^{-2/d_\delta})\]
%$2\omega_f\big(\tfrac{4\epsilon}{1-\delta}\sqrt{\tfrac{d}{d_\delta}}\big)+19\sqrt{d}\,\omega_f(\tfrac{2\sqrt{d}}{(1-\delta)\sqrt{d_\delta}}N^{-2/d_\delta}L^{-2/d_\delta})$ 
for ReLU FNNs to approximate $f$ in the $\epsilon$-neighborhood, $d_\delta=\calO\big(d_{\mathcal{M}}\tfrac{\ln (d/\delta)}{\delta^2}\big)$ for any given $\delta\in(0,1)$. 

Besides, we studied how to select the best ReLU FNN to approximate continuous function in parallel computing. In particular, ReLU FNNs with depth $\calO(1)$ are the best choices if the number of parallel computing cores $p$ is sufficiently large. ReLU FNNs with width $\calO(d)$ are best choices if $p=\calO(1)$. The width of best ReLU FNNs is between $\calO(\sqrt{p})$ and $\calO(p)$ if $p$ is moderate. 

We would like to remark that our analysis was based on the fully connected feed-forward neural networks and the ReLU activation function. It would be very interesting to generalize our conclusions to neural networks with other types of architectures (e.g., convolutional neural networks) and activation functions (e.g., tanh and sigmoid functions). %Another important direction is to extend our results to the $L^\infty$-norm on the whole domain without the trifling  region. 
Besides, if identity maps are allowed in the construction of neural networks as in the residual networks \cite{7780459}, the size of FNNs in our construction can be further optimized. Finally, the proposed analysis could be generalized to other function spaces with explicit formulas to characterize the approximation error. These will be left as future work.

\section*{Acknowledgments}
Z.~Shen is supported by Tan Chin Tuan Centennial Professorship.  H.~Yang was partially supported by the US National Science Foundation under award DMS-1945029.
%%%%%%%%%%%%%%%%%%%%%%%%%%%%%%%%%%%%%%%%%%%%%%%%%%%%%%%%%%%%%%%
%\clearpage
\bibliographystyle{siam}
\bibliography{references}

\begin{thebibliography}{10}

\bibitem{6857341}
{\sc O.~{Abdel-Hamid}, A.~{Mohamed}, H.~{Jiang}, L.~{Deng}, G.~{Penn}, and
  D.~{Yu}}, {\em Convolutional neural networks for speech recognition},
  IEEE/ACM Transactions on Audio, Speech, and Language Processing, 22 (2014),
  pp.~1533--1545.

\bibitem{Anthony:2009}
{\sc M.~Anthony and P.~L. Bartlett}, {\em Neural Network Learning: Theoretical
  Foundations}, Cambridge University Press, New York, NY, USA, 1st~ed., 2009.

\bibitem{Baraniuk2009}
{\sc R.~G. Baraniuk and M.~B. Wakin}, {\em Random projections of smooth
  manifolds}, Foundations of Computational Mathematics, 9 (2009), pp.~51--77.

\bibitem{barron1993}
{\sc A.~R. Barron}, {\em Universal approximation bounds for superpositions of a
  sigmoidal function}, IEEE Transactions on Information Theory, 39 (1993),
  pp.~930--945.

\bibitem{Bartlett98almostlinear}
{\sc P.~Bartlett, V.~Maiorov, and R.~Meir}, {\em Almost linear {VC} dimension
  bounds for piecewise polynomial networks}, Neural Computation, 10 (1998),
  pp.~217--3.

\bibitem{6697897}
{\sc M.~Bianchini and F.~Scarselli}, {\em On the complexity of neural network
  classifiers: A comparison between shallow and deep architectures}, IEEE
  Transactions on Neural Networks and Learning Systems, 25 (2014),
  pp.~1553--1565.

\bibitem{BLUM1991511}
{\sc E.~K. Blum and L.~K. Li}, {\em Approximation theory and feedforward
  networks}, Neural Networks, 4 (1991), pp.~511 -- 515.

\bibitem{citeulike:3408165}
{\sc D.~S. Broomhead and D.~Lowe}, {\em {Multivariable Functional Interpolation
  and Adaptive Networks}}, Complex Systems 2,  (1988), pp.~321--355.

\bibitem{DBLP:journals/corr/abs-1811-09054}
{\sc J.~Cai, D.~Li, J.~Sun, and K.~Wang}, {\em Enhanced expressive power and
  fast training of neural networks by random projections}, CoRR, abs/1811.09054
  (2018).

\bibitem{471413}
{\sc S.~Chen and D.~Donoho}, {\em Basis pursuit}, in Proceedings of 1994 28th
  Asilomar Conference on Signals, Systems and Computers, vol.~1, Oct 1994,
  pp.~41--44 vol.1.

\bibitem{10.3389/fams.2018.00014}
{\sc C.~K. Chui, S.-B. Lin, and D.-X. Zhou}, {\em Construction of neural
  networks for realization of localized deep learning}, Frontiers in Applied
  Mathematics and Statistics, 4 (2018), p.~14.

\bibitem{Ciresan:2011:FHP:2283516.2283603}
{\sc D.~C. Cire\c{s}an, U.~Meier, J.~Masci, L.~M. Gambardella, and
  J.~Schmidhuber}, {\em Flexible, high performance convolutional neural
  networks for image classification}, in Proceedings of the Twenty-Second
  International Joint Conference on Artificial Intelligence - Volume Volume
  Two, IJCAI'11, AAAI Press, 2011, pp.~1237--1242.

\bibitem{Sig4}
{\sc D.~Costarelli and A.~R. Sambucini}, {\em Saturation classes for
  max-product neural network operators activated by sigmoidal functions},
  Results in Mathematics, 72 (2017), pp.~1555 -- 1569.

\bibitem{Sig3}
{\sc D.~Costarelli and G.~Vinti}, {\em Convergence for a family of neural
  network operators in orlicz spaces}, Mathematische Nachrichten, 290 (2017),
  pp.~226--235.

\bibitem{Sig5}
\leavevmode\vrule height 2pt depth -1.6pt width 23pt, {\em Approximation
  results in orlicz spaces for sequences of kantorovich max-product neural
  network operators}, Results in Mathematics, 73 (2018), pp.~1 -- 15.

\bibitem{Cybenko1989ApproximationBS}
{\sc G.~Cybenko}, {\em Approximation by superpositions of a sigmoidal
  function}, MCSS, 2 (1989), pp.~303--314.

\bibitem{Daubechies2019}
{\sc I.~Daubechies, R.~DeVore, S.~Foucart, B.~Hanin, and G.~Petrova}, {\em
  Nonlinear approximation and (deep) {ReLU} networks}, vol.~abs/1905.02199,
  2019.

\bibitem{radiusbase}
{\sc R.~DEVORE and A.~RON}, {\em Approximation using scattered shifts of a
  multivariate function}, Transactions of the American Mathematical Society,
  362 (2010), pp.~6205--6229.

\bibitem{devore_1998}
{\sc R.~A. DeVore}, {\em Nonlinear approximation}, Acta Numerica, 7 (1998),
  p.~51–150.

\bibitem{E2017}
{\sc W.~E, J.~Han, and A.~Jentzen}, {\em Deep learning-based numerical methods
  for high-dimensional parabolic partial differential equations and backward
  stochastic differential equations}, Communications in Mathematics and
  Statistics, 5 (2017), pp.~349--380.

\bibitem{Weinan2019APE}
{\sc W.~E, C.~Ma, and Q.~Wang}, {\em A priori estimates of the population risk
  for residual networks}, ArXiv, abs/1903.02154 (2019).

\bibitem{Weinan2019}
{\sc W.~E, C.~Ma, and L.~Wu}, {\em A priori estimates of the population risk
  for two-layer neural networks}, Communications in Mathematical Sciences, 17
  (2019), pp.~1407 -- 1425.

\bibitem{DBLP:journals/corr/abs-1807-00297}
{\sc W.~E and Q.~Wang}, {\em Exponential convergence of the deep neural network
  approximation for analytic functions}, CoRR, abs/1807.00297 (2018).

\bibitem{Han8505}
{\sc J.~Han, A.~Jentzen, and W.~E}, {\em Solving high-dimensional partial
  differential equations using deep learning}, Proceedings of the National
  Academy of Sciences, 115 (2018), pp.~8505--8510.

\bibitem{HANGELBROEK2010203}
{\sc T.~Hangelbroek and A.~Ron}, {\em Nonlinear approximation using gaussian
  kernels}, Journal of Functional Analysis, 259 (2010), pp.~203 -- 219.

\bibitem{Boris}
{\sc B.~Hanin and M.~Sellke}, {\em Approximating continuous functions by {ReLU}
  nets of minimal width},  (2017).

\bibitem{pmlr-v65-harvey17a}
{\sc N.~Harvey, C.~Liaw, and A.~Mehrabian}, {\em Nearly-tight {VC}-dimension
  bounds for piecewise linear neural networks}, in Proceedings of the 2017
  Conference on Learning Theory, S.~Kale and O.~Shamir, eds., vol.~65 of
  Proceedings of Machine Learning Research, Amsterdam, Netherlands, 07--10 Jul
  2017, PMLR, pp.~1064--1068.

\bibitem{7780459}
{\sc K.~He, X.~Zhang, S.~Ren, and J.~Sun}, {\em Deep residual learning for
  image recognition}, in 2016 IEEE Conference on Computer Vision and Pattern
  Recognition (CVPR), June 2016, pp.~770--778.

\bibitem{HORNIK1991251}
{\sc K.~Hornik}, {\em Approximation capabilities of multilayer feedforward
  networks}, Neural Networks, 4 (1991), pp.~251 -- 257.

\bibitem{HORNIK1989359}
{\sc K.~Hornik, M.~Stinchcombe, and H.~White}, {\em Multilayer feedforward
  networks are universal approximators}, Neural Networks, 2 (1989), pp.~359 --
  366.

\bibitem{Martin2020}
{\sc M.~Hutzenthaler, A.~Jentzen, T.~Kruse, and T.~A. Nguyen}, {\em A proof
  that rectified deep neural networks overcome the curse of dimensionality in
  the numerical approximation of semilinear heat equations}, SN Partial
  Differential Equations and Applications,  (2020).

\bibitem{NIPS2016_6112}
{\sc K.~Kawaguchi}, {\em Deep learning without poor local minima}, in Advances
  in Neural Information Processing Systems 29, D.~D. Lee, M.~Sugiyama, U.~V.
  Luxburg, I.~Guyon, and R.~Garnett, eds., Curran Associates, Inc., 2016,
  pp.~586--594.

\bibitem{opt}
{\sc K.~Kawaguchi and Y.~Bengio}, {\em Depth with nonlinearity creates no bad
  local minima in resnets},  (2018).

\bibitem{Kearns}
{\sc M.~J. Kearns and R.~E. Schapire}, {\em Efficient distribution-free
  learning of probabilistic concepts}, J. Comput. Syst. Sci., 48 (1994),
  pp.~464--497.

\bibitem{NIPS2012_4824}
{\sc A.~Krizhevsky, I.~Sutskever, and G.~E. Hinton}, {\em Imagenet
  classification with deep convolutional neural networks}, in Advances in
  Neural Information Processing Systems 25, F.~Pereira, C.~J.~C. Burges,
  L.~Bottou, and K.~Q. Weinberger, eds., Curran Associates, Inc., 2012,
  pp.~1097--1105.

\bibitem{Kumar:2002:IPC:600009}
{\sc V.~Kumar}, {\em Introduction to Parallel Computing}, Addison-Wesley
  Longman Publishing Co., Inc., Boston, MA, USA, 2nd~ed., 2002.

\bibitem{KURKOVA1992501}
{\sc V.~K{\r u}rkov{\' a}}, {\em Kolmogorov's theorem and multilayer neural
  networks}, Neural Networks, 5 (1992), pp.~501 -- 506.

\bibitem{Sig2}
{\sc G.~Lewicki and G.~Marino}, {\em Approximation of functions of finite
  variation by superpositions of a sigmoidal function}, Applied Mathematics
  Letters, 17 (2004), pp.~1147 -- 1152.

\bibitem{DBLP:journals/corr/LiangS16}
{\sc S.~Liang and R.~Srikant}, {\em Why deep neural networks?}, CoRR,
  abs/1610.04161 (2016).

\bibitem{Lin2014}
{\sc S.~Lin, X.~Liu, Y.~Rong, and Z.~Xu}, {\em Almost optimal estimates for
  approximation and learning by radial basis function networks}, Machine
  Learning, 95 (2014), pp.~147--164.

\bibitem{Sig1}
{\sc B.~Llanas and F.~Sainz}, {\em Constructive approximate interpolation by
  neural networks}, Journal of Computational and Applied Mathematics, 188
  (2006), pp.~283 -- 308.

\bibitem{2020arXiv200103040L}
{\sc J.~{Lu}, Z.~{Shen}, H.~{Yang}, and S.~{Zhang}}, {\em {Deep Network
  Approximation for Smooth Functions}}, arXiv e-prints,  (2020),
  p.~arXiv:2001.03040.

\bibitem{NIPS2017_7203}
{\sc Z.~Lu, H.~Pu, F.~Wang, Z.~Hu, and L.~Wang}, {\em The expressive power of
  neural networks: A view from the width}, in Advances in Neural Information
  Processing Systems 30, I.~Guyon, U.~V. Luxburg, S.~Bengio, H.~Wallach,
  R.~Fergus, S.~Vishwanathan, and R.~Garnett, eds., Curran Associates, Inc.,
  2017, pp.~6231--6239.

\bibitem{MAIOROV199981}
{\sc V.~Maiorov and A.~Pinkus}, {\em Lower bounds for approximation by mlp
  neural networks}, Neurocomputing, 25 (1999), pp.~81 -- 91.

\bibitem{258082}
{\sc S.~G. Mallat and Z.~Zhang}, {\em Matching pursuits with time-frequency
  dictionaries}, IEEE Transactions on Signal Processing, 41 (1993),
  pp.~3397--3415.

\bibitem{mcshane1934}
{\sc E.~J. McShane}, {\em Extension of range of functions}, Bull. Amer. Math.
  Soc., 40 (1934), pp.~837--842.

\bibitem{Hadrien}
{\sc H.~Montanelli and Q.~Du}, {\em New error bounds for deep {ReLU} networks
  using sparse grids}, SIAM Journal on Mathematics of Data Science, 1 (2019),
  pp.~78--92.

\bibitem{MO}
{\sc H.~Montanelli and H.~Yang}, {\em Error bounds for deep {ReLU} networks
  using the kolmogorov–arnold superposition theorem}, Neural Networks, 129
  (2020), pp.~1 -- 6.

\bibitem{bandlimit}
{\sc H.~Montanelli, H.~Yang, and Q.~Du}, {\em Deep {ReLU} networks overcome the
  curse of dimensionality for bandlimited functions}, Journal of Computational
  Mathematics,  (to appear).

\bibitem{NIPS2014_5422}
{\sc G.~F. Montufar, R.~Pascanu, K.~Cho, and Y.~Bengio}, {\em On the number of
  linear regions of deep neural networks}, in Advances in Neural Information
  Processing Systems 27, Z.~Ghahramani, M.~Welling, C.~Cortes, N.~D. Lawrence,
  and K.~Q. Weinberger, eds., Curran Associates, Inc., 2014, pp.~2924--2932.

\bibitem{DBLP:journals/corr/NguyenH17}
{\sc Q.~N. Nguyen and M.~Hein}, {\em The loss surface of deep and wide neural
  networks}, CoRR, abs/1704.08045 (2017).

\bibitem{6797088}
{\sc J.~{Park} and I.~W. {Sandberg}}, {\em Universal approximation using
  radial-basis-function networks}, Neural Computation, 3 (1991), pp.~246--257.

\bibitem{PETERSEN2018296}
{\sc P.~Petersen and F.~Voigtlaender}, {\em Optimal approximation of piecewise
  smooth functions using deep {ReLU} neural networks}, Neural Networks, 108
  (2018), pp.~296 -- 330.

\bibitem{PETRUSHEV2003158}
{\sc P.~Petrushev}, {\em Multivariate n-term rational and piecewise polynomial
  approximation}, Journal of Approximation Theory, 121 (2003), pp.~158 -- 197.

\bibitem{DBLP:journals/corr/RolnickT17}
{\sc D.~Rolnick and M.~Tegmark}, {\em The power of deeper networks for
  expressing natural functions}, CoRR, abs/1705.05502 (2017).

\bibitem{pmlr-v70-safran17a}
{\sc I.~Safran and O.~Shamir}, {\em Depth-width tradeoffs in approximating
  natural functions with neural networks}, in Proceedings of the 34th
  International Conference on Machine Learning, D.~Precup and Y.~W. Teh, eds.,
  vol.~70 of Proceedings of Machine Learning Research, International Convention
  Centre, Sydney, Australia, 06--11 Aug 2017, PMLR, pp.~2979--2987.

\bibitem{Sakurai}
{\sc A.~Sakurai}, {\em Tight bounds for the {VC}-dimension of piecewise
  polynomial networks}, in Advances in Neural Information Processing Systems,
  Neural information processing systems foundation, 1999, pp.~323--329.

\bibitem{10.1007/978-3-642-15825-4_10}
{\sc D.~Scherer, A.~M{\"u}ller, and S.~Behnke}, {\em Evaluation of pooling
  operations in convolutional architectures for object recognition}, in
  Artificial Neural Networks -- ICANN 2010, K.~Diamantaras, W.~Duch, and L.~S.
  Iliadis, eds., Berlin, Heidelberg, 2010, Springer Berlin Heidelberg,
  pp.~92--101.

\bibitem{Johannes}
{\sc J.~Schmidt-Hieber}, {\em Nonparametric regression using deep neural
  networks with {ReLU} activation function}, Ann. Statist., 48 (2020),
  pp.~1875--1897.

\bibitem{SHAHAM2018537}
{\sc U.~Shaham, A.~Cloninger, and R.~R. Coifman}, {\em Provable approximation
  properties for deep neural networks}, Applied and Computational Harmonic
  Analysis, 44 (2018), pp.~537 -- 557.

\bibitem{2019arXiv190210170S}
{\sc Z.~Shen, H.~Yang, and S.~Zhang}, {\em Nonlinear approximation via
  compositions}, Neural Networks, 119 (2019), pp.~74 -- 84.

\bibitem{suzuki2018adaptivity}
{\sc T.~Suzuki}, {\em Adaptivity of deep {ReLU} network for learning in besov
  and mixed smooth besov spaces: optimal rate and curse of dimensionality}, in
  International Conference on Learning Representations, 2019.

\bibitem{10.2307/1989708}
{\sc H.~Whitney}, {\em Analytic extensions of differentiable functions defined
  in closed sets}, Transactions of the American Mathematical Society, 36
  (1934), pp.~63--89.

\bibitem{Xie2013}
{\sc T.~F. Xie and F.~L. Cao}, {\em The rate of approximation of gaussian
  radial basis neural networks in continuous function space}, Acta Mathematica
  Sinica, English Series, 29 (2013), pp.~295--302.

\bibitem{yarotsky2017}
{\sc D.~Yarotsky}, {\em Error bounds for approximations with deep {ReLU}
  networks}, Neural Networks, 94 (2017), pp.~103 -- 114.

\bibitem{yarotsky18a}
{\sc D.~Yarotsky}, {\em Optimal approximation of continuous functions by very
  deep {ReLU} networks}, in Proceedings of the 31st Conference On Learning
  Theory, S.~Bubeck, V.~Perchet, and P.~Rigollet, eds., vol.~75 of Proceedings
  of Machine Learning Research, PMLR, 06--09 Jul 2018, pp.~639--649.

\end{thebibliography}
%%%%%%%%%%%%%%%%%%%%%%%%%%%%%%%%%%%
\end{document}